\numberwithin{equation}{section}
\newcounter{AbcT}
\newtheorem {Theorem}    {Theorem}[section]
\newtheorem*{remarkstar}{\textit{Remark}}
\newtheorem {Example}    {Example}
\newtheorem {Lemma}      [Theorem]    {Lemma}
\newtheorem {Corollary}   [Theorem] {Corollary}
\newtheorem {Proposition}[Theorem]    {Proposition}
\newtheorem {Claim}      [Theorem]    {Claim}
\theoremstyle{remark}
\newtheorem {Remark}		 [Theorem]    {\bf{Remark}}
\newtheorem {Definition} [Theorem]    {\bf{Definition}}
\newcounter{DM@bibnum}
\newcommand {\R} {{\mathbb R}}
\newcommand {\Z} {{\mathbb Z}}
\newcommand{\SL}{\operatorname {SL}}
\newcommand{\EL}{\operatorname {EL}}
 \newcommand{\rr}{\mathbb R}
  \newcommand{\cc}{\mathbb C}
 \newcommand{\zz}{\mathbb Z}
 \newcommand{\sltwo}{\mathfrak{sl}_2}
  \newcommand{\norm}[1]{\left\Vert#1\right\Vert}
\newcommand{\la}{\langle}
\newcommand{\ra}{\rangle}
\def\orth{{\rm orth}}
\def\log{{\rm log\,}}
\def\Aut{{\rm Aut\,}}
\def\NSL_2{{\mathcal N SL_2}}
\def\eps{\varepsilon}
\def\lam{\lambda}            
\def\phi{\varphi}
\def\sgal{\tilde s}
\def\hbar{\bar h}
\def\dbC{{\mathbb C}}
\def\dbE{{\mathbb E}}
\def\dbG{{\mathbb G}}
\def\dbL{{\mathbb L}}
\def\dbN{{\mathbb N}}
\def\dbQ{{\mathbb Q}}
\def\dbR{{\mathbb R}}
\def\dbZ{{\mathbb Z}}
\begin{document}

\title{Property (T) for Kac-Moody groups over rings}
\author{Mikhail Ershov}
\address{University of Virginia}
\email{ershov@virginia.edu}
\author{Ashley Rall}
\address{University of Virginia}
\email{ashley.rall@email.virginia.edu}
\address{University of Virginia}
\email{zz2d@virginia.edu}

\thanks{The work of the first author was partially supported by the NSF grant DMS-1201452
and the Sloan Research Fellowship grant BR 2011-105.}

\keywords{Kac-Moody groups, property (T)}

\maketitle
\centerline{ with an appendix by }
\vskip .1cm
\centerline{\sc Zezhou Zhang}
\vskip .7cm
\centerline{\it To Efim Zelmanov on the occasion of his 60th
birthday}

\begin{abstract}
Let $R$ be a finitely generated commutative ring with $1$, let $A$ be an indecomposable $2$-spherical generalized Cartan matrix of size at least $2$ and $M=M(A)$ the largest absolute value of a non-diagonal entry of $A$.
We prove that there exists an integer $n=n(A)$ such that the Kac-Moody group $\dbG_A(R)$ has property $(T)$ whenever $R$ has no proper ideals of index less than $n$ and all positive integers less than or equal to
$M$ are invertible in $R$.
\end{abstract}

\section{Introduction}

Kac-Moody groups can be thought of as infinite-dimensional analogues of Chevalley groups or algebraic groups.
Over the last two decades they have attracted a lot of attention from mathematicians working
in many different areas. Kac-Moody groups were shown to have a deep and interesting structure theory; at the same
time they provided an excellent source of test examples for various conjectures and helped settle open problems 
in geometric group theory, Lie theory and related areas (see, e.g., \cite{CR2,Re} and references therein). Most of the results obtained so far deal with Kac-Moody groups over fields, with the case of finite fields proving to be particularly interesting. At the same time,
the subject of Kac-Moody groups over rings remains a largely uncharted territory, so much so that even
the ``right'' definition over non-fields is yet to be agreed on. 

Kac-Moody groups over rings in the sense of this paper are defined very explicitly by generators and relations. This is the definition used, for instance, in recent papers of Allcock~\cite{Al,Al2}; see \S~2 for a brief discussion
of other possible definitions and connections between them.  
Given a generalized Cartan matrix $A$, let $\Phi=\Phi(A)$ be the associated system of real roots. For an arbitrary commutative ring $R$ (with $1$), the corresponding simply-connected Kac-Moody group $\dbG_A(R)$ is the group generated by the root subgroups $\{X_{\alpha}\}_{\alpha\in \Phi}$, each of which is isomorphic to the additive group of $R$, modulo certain Steinberg-type relations.\footnote{In this paper we will only discuss simply-connected Kac-Moody groups, so for brevity we
will restrict all the definitions to the simply-connected  case.} In particular, if $A$ is a matrix of finite
(spherical) type and $R$ is a field,  $\dbG_A(R)$ is the corresponding Chevalley group with its standard Steinberg presentation.
It is easy to see that the obtained correspondence $R\mapsto \dbG_A(R)$ is functorial and for any epimorphisms
of rings $R\to S$ the corresponding map $\dbG_A(R)\to \dbG_A(S)$ is also an epimorphism.

The main goal of this paper is to establish a sufficient condition for Kac-Moody groups over rings
to have Kazhdan's property $(T)$. The first results on property $(T)$ for (non-spherical and non-affine) Kac-Moody groups are due to Dymara and Januszkiewicz \cite{DJ} who proved that the group $\dbG_A(F)$ has property $(T)$ for any
indecomposable $2$-spherical generalized Cartan matrix $A$ and any finite field $F$ satisfying $|F|>1764^{d(A)}$, where $d(A)$ denotes the size of $A$. In \cite{Op1}, Oppenheim obtained a quantitative improvement of this result, replacing exponential bound in $d(A)$ by a polynomial (in fact, quadratic) one; see also \cite{Op2} where a generalization of property~(T) dealing with affine isometric actions on Banach spaces is established under similar restrictions on $F$. It does not seem possible to extend those proofs to groups over non-fields since both arguments make essential use of the action of Kac-Moody groups on the associated  buildings, which can only be constructed in the case of fields. 

In \cite{EJ}, an algebraic counterpart of the method from \cite{DJ} was used to establish property $(T)$ for 
``unipotent subgroups'' of Kac-Moody groups over finite rings. The ``positive unipotent subgroup'' $\dbG_A^+(R)$
of the Kac-Moody group $\dbG_A(R)$ is defined to be the subgroup of $\dbG_A(R)$ generated by positive root subgroups.
In \cite{EJ}, it was shown that the group $\dbG_A^+(R)$ has property $(T)$ for any indecomposable $2$-spherical $d\times d$ generalized Cartan matrix $A$ with simply-laced Dynkin diagram and any finite commutative ring $R$ with $1$ which has no proper ideals of index less than $(d-1)^2$. The abelianization of the group $\dbG_A^+(R)$ is isomorphic to a direct sum of several copies of $(R,+)$, whence $\dbG_A^+(R)$ cannot possibly have property $(T)$ if $R$ is infinite. However, it turns out that the techniques from \cite{EJ} can be used to prove property $(T)$ for the full Kac-Moody group $\dbG_A(R)$ over an arbitrary finitely generated commutative ring $R$ under a similar
restriction on indices of ideals in $R$:

\begin{Theorem}
\label{thm:main}
Let $A=(a_{ij})$ be an indecomposable 2-spherical generalized Cartan matrix of size $d\geq 2$, let $M=M(A)=\max\{|a_{ij}|: i\neq j\}=\max\{-a_{ij}: i\neq j\}$ be the largest absolute value of a non-diagonal
entry of $A$ (the assumptions on $A$ imply that $1\leq M\leq 3$).
Define the integer $n=n(A)$ as follows:
\begin{itemize}
\item[(i)] $n=(2d-2)^2$ if $M=1$ (equivalently, the Dynkin diagram of $A$ is simply-laced)
\item[(ii)] $n=3(2d-2)^4$ if $M=2$ (equivalently, the Dynkin diagram of $A$ has a double edge, but no triple edges)
\item[(iii)] $n=188(2d-2)^{16}$ if $M=3$ (equivalently, the Dynkin diagram of $A$ has a triple edge)
\end{itemize}
Let $R$ be any finitely generated commutative ring with $1$ which does not have proper ideals of index less than $n$ and such that every positive integer $\leq M$ is invertible in $R$. Then the Kac-Moody group $\dbG_A(R)$ has Kazhdan's property $(T)$.
\end{Theorem}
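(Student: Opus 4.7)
The plan is to reduce property $(T)$ for the full Kac-Moody group $\dbG_A(R)$ to quantitative property $(T)$ for a finite family of rank $2$ subgroups, and then glue using a codistance/angle criterion in the spirit of \cite{EJ}. The essential difference from the positive unipotent case $\dbG_A^+(R)$ treated in \cite{EJ} is that the generating subgroups here will involve both positive and negative root subgroups; this is what lets them have property $(T)$ even when $R$ is infinite, whereas $\dbG_A^+(R)$ itself cannot.

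First I would fix the generating family: for every pair $i\neq j$ in $\{1,\dots,d\}$, set
$$
G_{ij}=\langle X_{\pm\alpha_i},X_{\pm\alpha_j}\rangle \leq \dbG_A(R).
$$
By $2$-sphericality, the principal $2\times 2$ submatrix of $A$ on rows and columns $\{i,j\}$ is of finite (spherical) type ($A_1\times A_1$, $A_2$, $B_2$, or $G_2$ according as $a_{ij}a_{ji}=0,1,2,3$), and the Steinberg-type defining relations of $\dbG_A(R)$ exhibit $G_{ij}$ as a rank $2$ Chevalley-Steinberg group over $R$ of the corresponding type. These subgroups visibly generate $\dbG_A(R)$, and the hypothesis that every positive integer $\leq M$ is invertible in $R$ is exactly what keeps the rank $2$ Steinberg commutator identities usable. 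The next task is to prove quantitative property $(T)$ for each individual $G_{ij}$, with an explicit Kazhdan constant depending only on $A$ and on the index threshold in the hypothesis on $R$; in the simply laced case this essentially reduces to known property $(T)$ results for $\SL_3$-type groups over rings without small quotients, while the $B_2$ and especially $G_2$ cases demand substantially more care because both the number of positive roots and the magnitude of the structure constants grow.

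To glue, I would invoke the codistance framework of \cite{EJ}: property $(T)$ of $\dbG_A(R)$ follows once one shows that in every unitary representation $(\pi,\calH)$ of $\dbG_A(R)$ without invariant vectors the fixed subspaces $\calH^{G_{ij}}$ are uniformly non-orthogonal. For two ``adjacent'' subgroups $G_{ij}$ and $G_{jk}$, which share the rank $1$ subgroup $\langle X_{\pm\alpha_j}\rangle$, the angle between $\calH^{G_{ij}}$ and $\calH^{G_{jk}}$ can be controlled by a computation inside $\langle X_{\pm\alpha_i},X_{\pm\alpha_j},X_{\pm\alpha_k}\rangle$, which is again of spherical type by $2$-sphericality. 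Indecomposability of $A$ makes the graph of adjacent pairs connected, so a uniform angle bound on adjacent pairs propagates to a positive Kazhdan constant for the whole collection $\{G_{ij}\}$, and hence for $\dbG_A(R)$; the combinatorial factors $(2d-2)^{k}$ in the definition of $n(A)$ arise at this gluing stage. The main obstacle will be the quantitative step for a $G_2$ pair: six positive roots and structure constants up to $3$, over a ring in which only $6\in R^\times$ is assumed, demand an essentially new Kazhdan-constant computation, and this is precisely what forces the prefactor $188$ and the exponent $16$ in $n(A)=188(2d-2)^{16}$ when $M=3$, as compared with the much milder $(2d-2)^2$ in the simply laced case.
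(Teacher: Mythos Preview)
There is a genuine gap. Your key sentence ``the angle between $\calH^{G_{ij}}$ and $\calH^{G_{jk}}$ can be controlled by a computation inside $\langle X_{\pm\alpha_i},X_{\pm\alpha_j},X_{\pm\alpha_k}\rangle$, which is again of spherical type by $2$-sphericality'' is false: $2$-sphericality says nothing about rank~$3$ subdiagrams. Already for $A$ of affine type $\widetilde A_2$ the subgroup generated by all three $\langle X_{\pm\alpha_i}\rangle$ is the entire infinite Kac-Moody group. So you have no tractable ambient group in which to carry out the angle computation, and no tool to bound $\orth(G_{ij},G_{jk})$: the available orthogonality estimates (Corollary~\ref{cor:orth}) are for \emph{abelian} root subgroups sitting inside the \emph{nilpotent} positive unipotent subgroup of a rank~$2$ Chevalley group, not for full rank~$2$ Chevalley groups inside a rank~$3$ group of unknown type. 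Nor does connectedness of the Dynkin diagram let angle bounds on adjacent pairs ``propagate'' in the way you suggest; the criterion (Theorem~\ref{propT_criterion}) demands a bound on \emph{every} pair of subgroups in the family.

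The paper sidesteps this by choosing a different generating family. Instead of the $\binom{d}{2}$ rank~$2$ Chevalley groups $G_{ij}$, it uses fewer than $2d$ individual \emph{root} subgroups $X_\gamma$ (each isomorphic to $(R,+)$), indexed by a carefully constructed set $\Sigma\subset\Phi$ (Theorem~\ref{thm:prenilpotentgeneration}): $\Sigma$ consists of the simple roots together with certain Weyl translates of negative simple roots, engineered so that any two elements of $\Sigma$ either give commuting root groups or lie, after a single Weyl conjugation, in a rank~$2$ spherical subsystem. Your groups $G_{ij}$ do enter, but only as witnesses for relative~$(T)$: each $X_\gamma$ is conjugate into some $G_{ij}$, which has property~$(T)$ by \cite{EJK}, so $(\dbG_A(R),X_\gamma)$ has relative~$(T)$ by Lemma~\ref{relativeT_trivial}. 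The orthogonality bound $\orth(X_\gamma,X_\delta)<\frac{1}{2d-2}$ then comes from Corollary~\ref{cor:orth} applied inside $\dbG^+_\Psi(R)$ for the appropriate rank~$2$ system $\Psi$. The exponents $2,4,16$ and prefactors $1,3,188$ in $n(A)$ are exactly those in the $A_2$, $B_2$, $G_2$ cases of Corollary~\ref{cor:orth}, and the base $(2d-2)$ comes from $|\Sigma|-1\leq 2d-2$; none of these numbers would fall out of your setup with $\binom{d}{2}$ subgroups.
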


As a simple corollary of this theorem, we deduce a uniform bound for Kazhdan constants
(with respect to generating sets of bounded size) for Kac-Moody groups of a fixed indecomposable $2$-spherical
type over finite fields of sufficiently large characteristic:

\begin{Corollary}  Let $A$ be an indecomposable $2$-spherical generalized Cartan matrix of size $d\geq 2$, and let $n=n(A)$ be defined as in Theorem~\ref{thm:main}. There exist an integer $k=k(A)$ and a real number $\eps=\eps(A)>0$ 
with the following property: for any finite field $F$ with 
${\rm char}(F)> n$ there exists a finite generating set $S(F)$ of $\dbG_A(F)$ with $|S(F)|\leq k$
such that $\kappa(\dbG_A(F),S(F))\geq \eps$ where $\kappa(\dbG_A(F),S(F))$ is the Kazhdan constant of $\dbG_A(F)$ with respect
to $S(F)$. 
\end{Corollary}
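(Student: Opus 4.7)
My strategy is to transport uniform bounds from property~$(T)$ of a single master Kac-Moody group via surjections onto all the $\dbG_A(F)$ of interest. Set $R^* := \dbZ[1/n!][x]$; this is a finitely generated commutative ring in which every positive integer $\le M \le 3 \le n$ is invertible. The key routine verification is that $R^*$ has no proper ideals of index less than $n$: any finite quotient of $R^*$ has at least one residue field of characteristic $p$, but since all primes $\le n$ are already inverted in $R^*$, one must have $p>n$, whence the quotient has order $\ge p > n$. Theorem~\ref{thm:main} then applies and gives $\dbG_A(R^*)$ property~$(T)$.

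Since a discrete group with property~$(T)$ is automatically finitely generated, I fix a finite generating set $S^*$ of $\dbG_A(R^*)$ and define $k(A) := |S^*|$ and $\eps(A) := \kappa(\dbG_A(R^*), S^*) > 0$. The crucial conceptual point is that I avoid constructing generators of $\dbG_A(R^*)$ by hand: doing so would require a nontrivial analysis of commutator identities among root subgroups over the polynomial ring $R^*$ (this would be the main technical obstacle if one wanted an effective bound on $k(A)$), but~$(T)$ supplies finite generation for free.

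Given any finite field $F$ of characteristic $p > n$, the primitive element theorem writes $F = \dbF_p[\theta]$, producing a ring epimorphism $\phi: R^* \twoheadrightarrow F$ by $x \mapsto \theta$ and $1/j \mapsto j^{-1} \bmod p$ for $1 \le j \le n$ (well-defined because $p > n \ge j$). The functoriality of $\dbG_A(-)$ recalled in the introduction turns $\phi$ into a group surjection $\pi: \dbG_A(R^*) \twoheadrightarrow \dbG_A(F)$. Setting $S(F) := \pi(S^*)$, clearly $|S(F)| \le k(A)$ and $S(F)$ generates $\dbG_A(F)$. The standard monotonicity of Kazhdan constants under quotients — any unitary representation $\rho$ of $\dbG_A(F)$ without invariants pulls back along $\pi$ to a representation $\rho \circ \pi$ of $\dbG_A(R^*)$ without invariants, and $\|(\rho \circ \pi)(s)v - v\| = \|\rho(\pi(s))v - v\|$ for every unit vector $v$ and every $s \in S^*$ — yields $\kappa(\dbG_A(F), S(F)) \ge \eps(A)$ after taking the max over generators and the infimum over pairs $(\rho,v)$. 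Aside from the index computation for $R^*$, every step is a direct invocation of Theorem~\ref{thm:main}, the functoriality of the Kac-Moody construction, or a textbook property of Kazhdan constants.
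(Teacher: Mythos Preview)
Your proof is correct and follows essentially the same approach as the paper: both use the master ring $\dbZ[\tfrac{1}{n!}][x]$, apply Theorem~\ref{thm:main} to get property~$(T)$ for the corresponding Kac-Moody group, and then push a fixed finite generating set forward along the functorial epimorphism $\dbG_A(\dbZ[\tfrac{1}{n!}][x])\twoheadrightarrow \dbG_A(F)$ to transfer the Kazhdan constant. Your write-up simply fills in a few details (the index computation, the primitive element theorem, the monotonicity argument) that the paper leaves implicit.
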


We do not know if the above result remains true if a lower bound on characteristic of $F$ is replaced
by a lower bound on its size, even if we assume that $A$ is simply-laced. To prove the corollary observe that the ring $R_n=\mathbb Z[\frac{1}{n!},t]$ (the ring of polynomials in one variable over $\mathbb Z[\frac{1}{n!}]$) has no ideals of index less than $n$ and surjects onto any finite field of characteristic larger than $n$. Therefore, for any finite field $F$ with ${\rm char}(F)>n$ there exists a natural epimorphism $\pi: \dbG_A(R_n)\to \dbG_A(F)$. If $S$ is any finite generating set for $\dbG_A(R_n)$ and $S(F)$ is its image in $\dbG_A(F)$, we get
$\kappa(\dbG_A(F),S(F))\geq \kappa(\dbG_A(R_n),S)$, and $\kappa(\dbG_A(R_n),S)>0$ since $\dbG_A(R_n)$
has property $(T)$ by Theorem~\ref{thm:main}.
\vskip .1cm
Another interesting result on property $(T)$ for Kac-Moody groups was obtained by Hartnick and K\"ohl~\cite{HK} who proved
that for any local field $F$ and any indecomposable 2-spherical $d\times d$ generalized Cartan matrix $A$, with $d\geq 2$, 
the group $\dbG_A(F)$ has property $(T)$ when considered as a topological group with the Kac-Peterson topology. We will give an alternative proof
of this theorem in \S~5.
\vskip .1cm

{\bf Organization:} In \S~2 we recall basic properties of Kac-Moody root systems and define Kac-Moody groups over commutative rings. In \S~3 we collect background information on property $(T)$. In \S~4 we establish some auxiliary results on orthogonality constants in Chevalley groups of rank $2$
(see \S~3 for the definition of orthogonality constants). In \S~5 we prove Theorem~\ref{thm:main} as well as its variation dealing with ``pseudo-parabolic'' subgroups and give a new proof of the theorem of Hartnick and K\"ohl mentioned above. 
\vskip .1cm

{\bf Convention:} All rings considered in this paper are assumed to be unital.

\vskip .1cm
{\bf Acknowledgments.} We are grateful to Pierre-Emmanuel Caprace for useful feedback and for suggesting a much simpler proof of Theorem~\ref{KMreals}. We would also like to thank Zezhou Zhang for suggesting a stronger version of Theorem~\ref{thm:pseudoparabolic} and providing useful references.

\section{Preliminaries on Kac-Moody groups}
In this section we recall basic properties of Kac-Moody root systems, define Kac-Moody groups over rings and state some basic
facts about them. Note that Kac-Moody groups can be defined without an explicit reference to Kac-Moody Lie algebras even though Lie algebras are needed to establish some key properties of Kac-Moody groups. For more details we refer the reader to the book of 
Kac~\cite{Kac} and recent paper of Allcock~\cite{Al}.

\subsection{Kac-Moody root systems} 

\vskip .05cm
Let $A=(a_{ij})$ be a {\it generalized Cartan matrix} (abbreviated below as GCM), that is, a square matrix satisfying the following conditions:
\begin{align*}
&\mbox{(a)}\quad a_{ij}\in{\dbZ} \mbox{ for all } i,j &
&\mbox{(b)}\quad a_{ii}=2 \mbox{ for all } i&\\
&\mbox{(c)}\quad a_{ij}\leq 0 \mbox{  if } i\neq j&
&\mbox{(d)}\quad a_{ij}=0 \iff a_{ji}=0.&
\end{align*}
For the rest of the section we fix a GCM $A$ with entries $a_{ij}$ and let $d$ denote its size.

Denote by $\Delta=\Delta(A)$ the root system of the complex Kac-Moody Lie algebra $\mathfrak g=\mathfrak g(A)$ associated to $A$ and by $\Phi=\Phi(A)$ the set of real roots in $\Delta$. We shall not use $\Delta$ or $\mathfrak g$ in this paper, so we will define $\Phi$ directly in terms of $A$.

Let $Q=\oplus_{i=1}^d \dbZ\alpha_i$ and $Q^{\vee}=\oplus_{i=1}^d \dbZ\alpha_i^{\vee}$ be free abelian groups of rank $d$
with bases $\Pi=\{\alpha_1,\ldots \alpha_d\}$ and $\Pi^{\vee}=\{\alpha_1^{\vee},\ldots \alpha_d^{\vee}\}$. Define the bilinear pairing
$\la \cdot,\cdot \ra:Q^{\vee}\times Q\to\dbZ$ by $\la \alpha_i^{\vee},\alpha_j\ra=a_{ij}$.
For each $1\leq i\leq d$ define the map $s_i\in \Aut(Q)$ by $s_i(x)=x-\la \alpha_i^{\vee},x\ra\alpha_i$; in particular
$s_i(\alpha_j)=\alpha_j -a_{ij}\alpha_i$.

Let $W=\la s_1,\ldots, s_d\ra$ be the subgroup of $\Aut(Q)$ generated by $\{s_i\}_{i=1}^d$, and define $\Phi=W(\Pi)$ to be the union of $W$-orbits of $\Pi$. It is not hard to check that the group $W$ is a Coxeter group; it is called the {\it Weyl group} of $A$. The elements of $\Pi$ are called {\it simple roots}, and elements of $\Phi$ are called {\it real roots};
we will refer to real roots just as {\it roots} in this paper since we will never deal with imaginary roots. As in the case of finite root systems, every root is a linear combination of simple roots with all coefficients non-negative or all coefficients
non-positive; roots are called positive and negative, accordingly. The sets of positive and negative roots will be denoted by $\Phi^+$ and $\Phi^-$, respectively. 

The Weyl group $W$ has unique action on $Q^{\vee}$ such that $\la w \alpha_i^{\vee},w \alpha_j\ra=\la \alpha_i^{\vee},\alpha_j\ra$.
For each root $\alpha\in \Phi$ define $\alpha^{\vee}\in Q^{\vee}$ as follows: choose $1\leq i\leq d$ and $w\in W$ such that
$\alpha=w\alpha_i$ and define $\alpha^{\vee}=w\alpha_i^{\vee}$; this definition does not depend on the representation of $\alpha$ as $w\alpha_i$. Define $s_{\alpha}\in \Aut(Q)$ by $s_{\alpha}(\beta)=\beta-\la \alpha^{\vee},\beta\ra \alpha$. It is easy to see that $s_{w\alpha}=ws_{\alpha}w^{-1}$ for all $\alpha\in\Phi, w\in W$; in particular
this implies that each $s_{\alpha}\in W$.

\begin{Definition} Given a subset $I$ of $\{1,\ldots,d\}$, we let $A_I$ be the $|I|\times |I|$ matrix $(a_{ij})_{i,j\in I}$. Matrices of the form $A_I$ will be called {\it submatrices of $A$}.
\end{Definition}

\begin{Definition} Let $A$ be a GCM.
\begin{itemize}
\item[(i)] $A$ is called {\it indecomposable} if there is no partition $\{1,\ldots,d\}=I\sqcup J$ with $I,J\neq\emptyset$
such that $a_{ij}= 0$ for all $i\in I,j\in J$.
\item[(ii)] $A$ is called {\it spherical} (or finite) if it is positive definite.
\item[(iii)] $A$ is called {\it affine} if $A$ is positive semi-definite and all of its proper indecomposable submatrices are spherical.
\item[(iv)] Given an integer $2\leq k\leq d$, the matrix $A$ is called {\it k-spherical} if for every $I\subseteq\{1,\ldots, k\}$
with $|I|=k$, the submatrix $A_I$ is spherical. This is equivalent to saying that for any such $I$ the subgroup
$\la s_i: i\in I\ra$ of $W$ is finite. 
\end{itemize}
\end{Definition}
It is easy to see that $A=(a_{ij})$ is {\it 2-spherical} if and only if $a_{ij}a_{ji}\leq 3$ for all $i\neq j$.

\begin{Definition}\rm A pair of roots $\alpha,\beta$ in $\Phi$ is called {\it prenilpotent} if there exist $w,w'\in W$
such that $w\alpha,w\beta\in \Phi^+$ and $w'\alpha,w'\beta\in \Phi^-$.
\end{Definition}

In the following proposition we collect some well-known properties of prenilpotent pairs which will be used in this paper.

\begin{Proposition} 
\label{prenil_basic}
Let $\alpha,\beta\in \Phi$ with $\beta\neq -\alpha$. The following hold:
\begin{itemize}
\item[(a)] $\{\alpha,\beta\}$ is prenilpotent if and only if the set $(\dbN\alpha+\dbN\beta)\cap \Phi$ is finite.
\item[(b)] The numbers $\la \alpha^{\vee},\beta\ra$ and $\la \beta^{\vee},\alpha\ra$ are both positive, both negative
or both zero.  
\item[(c)] If $\la \alpha^{\vee},\beta\ra\geq 0$, then $\{\alpha,\beta\}$ is prenilpotent, and moreover
$(\dbN\alpha+\dbN\beta)\cap \Phi\subseteq \{\alpha+\beta\}$, that is, $\alpha+\beta$ is the only possible root
in $\dbN\alpha+\dbN\beta$.
\item[(d)] Assume that $\la \alpha^{\vee},\beta\ra< 0$. Then the following are equivalent:
\begin{itemize}
\item[(i)] $\{\alpha,\beta\}$ is prenilpotent;
\item[(ii)] $\la s_{\alpha},s_{\beta}\ra$ is finite;
\item[(iii)] $\la \alpha^{\vee},\beta \ra \cdot \la \beta^{\vee},\alpha \ra\leq 3$.
\end{itemize}
\end{itemize}
\end{Proposition}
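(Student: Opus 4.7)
All four parts are classical, and the plan is to reduce every assertion to the rank-$2$ sub-system $\Phi':=\Phi\cap(\dbZ\alpha+\dbZ\beta)$ together with its Weyl subgroup $W':=\la s_\alpha,s_\beta\ra$. The starting observation is that $W'$ preserves the plane $V':=\dbQ\alpha+\dbQ\beta$, and in the basis $(\alpha,\beta)$ the product $s_\alpha s_\beta$ has matrix of trace $ab-2$ and determinant $1$, where $a:=\la\alpha^\vee,\beta\ra$ and $b:=\la\beta^\vee,\alpha\ra$. Analysing the characteristic polynomial $\lambda^2-(ab-2)\lambda+1$ shows that $W'$ is finite dihedral precisely when $ab\in\{0,1,2,3\}$ (giving orders $4,6,8,12$), while $ab=4$ gives affine rank-$2$ type with infinite $W'$, and $ab\ge 5$ gives hyperbolic rank-$2$ type.

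For part (b), I would first reduce to the case $\alpha=\alpha_i$ a simple root by replacing $\beta$ with $w^{-1}\beta$, where $\alpha=w\alpha_i$. Then $a=\la\alpha_i^\vee,\beta\ra=p-q$ for the $\alpha_i$-string $\{\beta-p\alpha_i,\ldots,\beta+q\alpha_i\}\subseteq\Phi$ (cf.\ Proposition~5.1 of \cite{Kac}), and the analogous $\beta$-string analysis gives $b=p'-q'$. The sign matching then follows from the fact that $\Phi'$ must carry the structure of a genuine rank-$2$ Kac-Moody root system whose Cartan data $(a,b)$ fits into an honest GCM, which forces $ab\ge 0$ and hence $\mathrm{sgn}(a)=\mathrm{sgn}(b)$.

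For part (a), the forward direction is immediate: if $w,w'$ witness prenilpotency, then any $\gamma\in(\dbN\alpha+\dbN\beta)\cap\Phi$ satisfies $w\gamma\in\Phi^+$ and $w'\gamma\in\Phi^-$, and the set of real roots so flipped has cardinality $\ell(w'w^{-1})<\infty$. For the converse, if the positive cone is finite then $W'$ must be finite dihedral; otherwise, using $a,b\le 0$ from (b), the $W'$-orbit $\ldots,s_\beta s_\alpha\beta,s_\alpha\beta,\beta,s_\beta\alpha,s_\alpha s_\beta\alpha,\ldots$ supplies infinitely many distinct real roots all lying in $\dbN\alpha+\dbN\beta$. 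Once $W'$ is finite dihedral, a suitable power of $s_\alpha s_\beta$ (combined with $s_\alpha$ if needed) yields a $w'$ sending both $\alpha$ and $\beta$ into $\Phi^-$, while $w=1$ can be taken after first applying a Weyl element that moves $\alpha,\beta$ into $\Phi^+$.

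Parts (c) and (d) now follow from the rank-$2$ classification. In (c), $a\ge 0$ forces $b\ge 0$ by (b), so $W'$ is finite dihedral and $\Phi'$ is one of $A_1\times A_1,A_2,B_2,G_2$; inspection in each type shows that $\alpha+\beta$ is the only root which can lie in $\dbN\alpha+\dbN\beta$ under the nonnegative-pairing hypothesis. In (d), with $a,b<0$, the matrix $\bigl(\begin{smallmatrix}2 & a\\ b & 2\end{smallmatrix}\bigr)$ is a rank-$2$ GCM, spherical iff $ab\le 3$, and the equivalences (i)$\Leftrightarrow$(ii)$\Leftrightarrow$(iii) reduce to the equivalences between sphericity of this GCM, finiteness of $W'$, and prenilpotency that we have already established. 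The main obstacle is the sign matching in (b), which underpins the converse of (a) and must be handled without circularity; the cleanest route is via the $\alpha_i$-string identities in \cite{Kac} combined with a direct verification that the pair $(a,b)$ corresponds to an honest rank-$2$ GCM.
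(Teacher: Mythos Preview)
The paper's own proof is just a list of citations to \cite{KP1,KP2} (with a minor reduction in (a) to the case $\alpha,\beta\in\Phi^+$), so your self-contained rank-$2$ approach is a different and more ambitious route. However, there is a genuine gap: you assert in (c), and implicitly in the converse of (a), that $a,b\ge 0$ forces $W'=\la s_\alpha,s_\beta\ra$ to be finite dihedral. This is false. In affine type $\widetilde A_1$, take $\alpha=\alpha_1$ and $\beta=s_{\alpha_1}(\alpha_0)=\alpha_0+2\alpha_1$; then $\la\alpha^\vee,\beta\ra=\la\beta^\vee,\alpha\ra=2$, so $ab=4$ and $s_\alpha s_\beta$ has trace $2$ and infinite order. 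Your proof of (c) therefore breaks at ``$W'$ is finite dihedral and $\Phi'$ is one of $A_1\times A_1,A_2,B_2,G_2$'', and your converse of (a) breaks at ``otherwise, using $a,b\le 0$ from (b)'', which presupposes the contrapositive of the same false claim. The same example shows that your justification of (b) via ``$\Phi'$ carries an honest rank-$2$ GCM structure'' cannot work as stated, since the would-be off-diagonal entries are $+2$.

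The inclusion in (c) can be rescued without finiteness of $W'$: if $a,b\ge 0$ and $\gamma=i\alpha+j\beta\in\Phi$ with $i,j\ge 1$, then $\la\alpha^\vee,\gamma\ra=2i+ja\ge 2$ and likewise $\la\beta^\vee,\gamma\ra\ge 2$; applying (b) to the pairs $\{\alpha,\gamma\}$ and $\{\beta,\gamma\}$ gives $\la\gamma^\vee,\alpha\ra,\la\gamma^\vee,\beta\ra\ge 1$, whence $2=\la\gamma^\vee,\gamma\ra=i\la\gamma^\vee,\alpha\ra+j\la\gamma^\vee,\beta\ra\ge i+j$, forcing $i=j=1$. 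With this in hand, the converse of (a) should be split on the sign of $a$ (using the above for $a\ge 0$ and your $W'$-orbit argument only when $a,b<0$) rather than routed through finiteness of $W'$. For (b) itself you need a direct argument independent of any putative GCM structure on $\Phi'$, e.g.\ along the lines of \cite[p.~139]{KP1}.
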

\begin{proof} (a) holds by \cite[Proposition~4.7]{KP2}. Note that in \cite{KP2} the result is proved under the additional
hypothesis that $\alpha,\beta\in\Phi^+$; however, (a) easily follows from this special case. Indeed, both conditions in (a)
do not change if we replace $\{\alpha,\beta\}$ by $\{w\alpha,w\beta\}$ for some $w\in W$. Since $\beta\neq -\alpha$,
it is easy to see that there exists $w\in W$ such that $w\alpha$ and $w\beta$ are both positive or both negative. In the
former case we are reduced to the situation in \cite{KP2}, and in the latter case we use the obvious fact that $\{\gamma,\delta\}$ is prenilpotent if and only if $\{-\gamma,-\delta\}$ is prenilpotent.

(b) holds, for example, by \cite[p.139]{KP1} (see the argument after Lemma~1.2).

(c) follows from (a) and \cite[Lemma~2.1(c)(ii)]{KP2} (using the same remark as in the proof of (a)).
Finally, (d) follows from the proof of  \cite[Proposition~4.7]{KP2}.
\end{proof}

Finally, recall the notion of the {\it Dynkin diagram} of $A$, which we will denote by $Dyn(A)$.
We define $Dyn(A)$ to be a graph with vertex set $\Pi=\{\alpha_1,\ldots, \alpha_d\}$,
where $\alpha_i$ and $\alpha_j$ (for $i\neq j$) are connected by $a_{ij}a_{ji}$ edges.
Given a subset $I$ of $\{1,\ldots, d\}$, we denote by $Dyn_I(A)$ the full subgraph
of $Dyn(A)$ on the vertex set $\{\alpha_i: i\in I\}$. We will refer to $Dyn_I(A)$
as a {\it Dynkin subdiagram}.

\subsection{Definition of Kac-Moody groups over rings}
In this subsection we define Kac-Moody groups over rings by certain presentations by generators and relators. Our definition is easily seen to be equivalent to the one in Allcock's paper~\cite{Al}. 

Let $A$ be a GCM and $R$ a commutative ring with $1$. Define the Kac-Moody group $\dbG_A(R)$ to be the group
with generators $\{x_{\alpha}(r): \alpha\in\Phi, r\in R\}$ subject to relations (R1)-(R7) below. In those relations
$r,u\in R$ and $\alpha,\beta\in\Phi$ are arbitrary unless a restriction is explicitly imposed.
The signs in the relations (R3) are not canonical and depend on the choice of a Chevalley basis for the Kac-Moody
Lie algebra associated to $A$ (see \cite[3.2]{Ti} for details). 

(R1) $x_{\alpha}(r+u)=x_{\alpha}(r)x_{\alpha}(u)$

(R2) If $\{\alpha,\beta\}$ is a prenilpotent pair, then
$$[x_{\alpha}(r),x_{\beta}(u)]=
\prod_{i,j\geq 1}x_{i\alpha+j\beta}(C_{ij\alpha\beta}r^i u^j)$$
\vskip -.2cm
\noindent
where
the product on the right hand side is over all pairs $(i,j)\in \dbN\times \dbN$
such that $i\alpha+j\beta\in \Phi$, in some fixed order, and
$C_{ij\alpha\beta}$ are integers independent of $R$ (but depending on the order).
Note that the product is finite by Proposition~\ref{prenil_basic}(a).

For $1\leq i\leq d$ and $r\in R^{\times}$ set 
$$\sgal_{i}(r)=x_{\alpha_i}(r)x_{-\alpha_i}(-r^{-1})x_{\alpha_i}(r),
\quad \sgal_{i}=\sgal_{i}(1)\quad\mbox{ and }\quad h_{i}(r)=\sgal_{i}(r)\sgal_{i}^{-1}.$$
The remaining relations are

(R3)
$\sgal_{i} x_{\alpha}(r)\sgal_{i}^{-1}=
x_{s_{i}\alpha}(\pm r)$

(R4) $h_{i}(r)x_{\alpha}(u)h_{i}(r)^{-1}=
x_{\alpha}(ur^{\la \alpha_i^{\vee},\alpha\ra})$ for $r\in R^{\times}$

(R5) $\sgal_{i}h_j(r)\sgal_i^{-1}=h_j(r)h_i(r^{-a_{ji}})$ for $r\in R^{\times}$

(R6) $h_{i}(ru)=h_{i}(r)h_{i}(u)$ for $r,u\in R^{\times}$

(R7) $[h_{i}(r),h_{j}(u)]=1$ for $r,u\in R^{\times}$

\begin{Remark} (a) Even though it is not obvious from the above relations, one can show that the Kac-Moody group $\dbG_A(R)$ can be defined directly in terms
of the root system $\Phi$, without explicit reference to $A$. In view of this, we will occasionally write $\dbG_{\Phi}(R)$ instead of $\dbG_A(R)$.

(b) The subgroup $\widetilde W=\la \sgal_i: 1\leq i\leq d\ra$ of $\dbG_A(F)$ is usually not isomorphic to $W$;
however, there is always an epimorphism $\widetilde W\to W$ which sends $\sgal_i$ to $s_i$ for each $i$, whose kernel is a finite group of exponent $\leq 2$.
\end{Remark}

Groups $\dbG_A(R)$ given by the above presentation first appeared in Tits' paper~\cite{Ti}; however, they are not called Kac-Moody groups in \cite{Ti}. Instead Tits defines a {\it Kac-Moody functor} (corresponding to a fixed GCM $A$), which we denote by $\dbG^{Tits}_A$, 
to be a functor from commutative rings to groups satisfying certain axioms and shows
that, when restricted to fields, such a functor is unique (up to natural equivalence), and for a field $F$, the group 
$\dbG^{Tits}_A(F)$ is isomorphic to the group $\dbG_A(F)$ given by the above presentation. If $R$ is a ring which is not a 
field, it is not known whether the group $\dbG^{Tits}_A(R)$ is uniquely determined up to isomorphism by the functor axioms or 
whether the group $\dbG_A(R)$ satisfies these axioms. What is known (and already established in \cite{Ti}) is that there is a 
natural homomorphism from $\dbG_A(R)$ to $\dbG^{Tits}_A(R)$ mapping root subgroups onto root subgroups. Another possible 
definition of Kac-Moody groups over rings, which uses highest weight modules, is discussed in \cite{Al2} and \cite{CW} and 
generalizes the analogous definition in the case of fields~\cite{CG}. Yet another candidate is the subgroup of the 
Mathieu-Rousseau complete Kac-Moody group $\dbG_A^{ma}(R)$ generated by real root subgroups (see \cite[\S 3]{Ro}). It can be 
shown that there are natural homomorphisms from the groups $\dbG_A(R)$ to $\dbG_A^{ma}(R)$ and to representation-theoretic 
Kac-Moody groups. 

Since property $(T)$ is preserved by homomorphic images, in view of the above remarks, the statement
of Theorem~\ref{thm:main} remains true if the groups $\dbG_A(R)$ are replaced by groups generated by the (real) root subgroups in any of the Kac-Moody groups mentioned in the previous paragraph (with the same restrictions on $A$ and $R$).
\vskip .2cm

Before proceeding, we define several Steinberg-type groups which project onto $\dbG_A(R)$.
Let $St^{(2)}_{A}(R)$ be the group generated by the same set of symbols $\{x_{\alpha}(r): \alpha\in\Phi, r\in R\}$
 but only subject to relations (R1) and (R2)
and by $St^{(3)}_{A}(R)$ the group with the same generating set and relations (R1), (R2) and (R3), so that we have natural epimorphisms $$St^{(2)}_{A}(R)\to St^{(3)}_{A}(R)\to \dbG_A(R).$$ 

The group $St^{(2)}_{A}(R)$ is called the Steinberg group (corresponding
to the pair $(A,R)$) in \cite{Ti}. The group $St^{(3)}_{A}(R)$ does not seem to have a specific name in the literature; we point
it out since it is the largest quotient of  $St^{(2)}_{A}(R)$ for which we will be able to prove property $(T)$ in the setting of
our main theorem (thus, relations (R4)-(R7) will not be important for us). Finally, \cite{MR} and \cite{Al} use 
a different notion of Steinberg group -- in their terminology Steinberg group is certain quotient of $St^{(3)}_{A}(R)$ which projects onto $\dbG_A(R)$. We refer the reader to \cite{Al} for the precise definition and detailed discussion about the relationship between different Steinberg-type groups.

\subsection{Some examples and facts about Kac-Moody groups}

For every root $\alpha\in \Phi$ and every subset $S$ of $R$ we will set $X_{\alpha}(S)=\{x_{\alpha}(s): s\in S\}$ (considered as a subset of $\dbG_A(R)$). If $S$ is a subgroup of $(R,+)$, then $X_{\alpha}(S)$ is a subgroup isomorphic to $S$. We will write $X_{\alpha}=X_{\alpha}(R)$
whenever $R$ is clear from the context.

The groups
$\{X_{\alpha}\}$ are called the root subgroups of $\dbG_A(R)$. Define $\dbG_A^+(R)=\la X_{\alpha}: \alpha\in \Phi^+\ra$
to be the subgroup of $\dbG_A(R)$ generated by all positive root subgroups. 

We proceed with two basic examples of Kac-Moody groups.

\begin{Example}
\label{exKM1}
Let $A$ be a GCM of spherical type (that is, $A$ is a Cartan matrix) and
$\Phi=\Phi(A)$. 
\end{Example}

Given a commutative ring $R$, let $\dbE_{\Phi}(R)$ denote
the elementary subgroup of the simply-connected Chevalley group of type $\Phi$ over $R$.
Then there exists a natural epimorphism $\dbG_A(R)=\dbG_{\Phi}(R)\to \dbE_{\Phi}(R)$, which is an isomorphism whenever $R$ is a field.

\begin{Example}
\label{exKM2}
Let $d\geq 2$ be an integer and define the $d\times d$ matrix $A=(a_{ij})$
by $a_{ij}=-1$ if $i-j\equiv \pm 1\mod d$, $a_{ij}=2$ if $i=j$ and
$a_{ij}=0$ otherwise. Then $\Phi=\Phi(A)$ is an affine root system of type
$\widetilde A_d$.
\end{Example}

Given a commutative ring $R$, there exists an epimorphism
$\pi:\dbG_A(R)\to EL_d(R[t,t^{-1}])$ given by
\begin{align*}
\pi(x_{\alpha_i}(r))=E_{i,i+1}(r) \mbox{ for } 1\leq i\leq d-1, &\quad
\pi(x_{\alpha_d}(r))=E_{d,1}(rt),\\
\pi(x_{-\alpha_i}(r))=E_{i+1,i}(r) \mbox{ for } 1\leq i\leq d-1, &\quad
\pi(x_{-\alpha_d}(r))=E_{1,d}(rt^{-1}).
\end{align*}
As in Example~\ref{exKM1}, $\pi$ is an isomorphism if $R$ is a field.
The group $\pi(\dbG^+_A(R))$ coincides with the subgroup of $EL_d(R[t])$
consisting of matrices which have upper-unitriangular image under the projection $EL_d(R[t])\to EL_d(R)$ which sends $t$ to $0$.

\vskip .2cm

We now return to the general case.

\begin{Theorem}
\label{thm:Al1} 
Let $A$ be a $2$-spherical GCM of size $d$ and $M=\max\{-a_{ij}: i\neq j\}$ (thus, $M\leq 3$). Let $R$ be a commutative ring
which does not have proper ideals of index $\leq M$. Then $\dbG_A^+(R)$ is generated by $\{X_{\alpha_i}\}_{i=1}^d$,
the root subgroups corresponding to simple roots. 
\end{Theorem}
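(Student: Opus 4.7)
Set $H := \la X_{\alpha_1}, \ldots, X_{\alpha_d}\ra$, a subgroup of $\dbG^+_A(R)$. Since $\dbG^+_A(R)$ is generated by $\{X_\alpha : \alpha \in \Phi^+\}$, the task is to prove $X_\alpha \subseteq H$ for every $\alpha \in \Phi^+$; I proceed by induction on $\height(\alpha)$. The base case (height $1$, so $\alpha$ is simple) is immediate. For the inductive step I assume $X_\beta \subseteq H$ for every $\beta \in \Phi^+$ with $\height(\beta) < \height(\alpha)$, and split into cases based on $|\mathrm{supp}(\alpha)|$, where $\mathrm{supp}(\alpha)$ denotes the set of simple roots occurring with nonzero coefficient in $\alpha$.

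\textbf{Case 1: $|\mathrm{supp}(\alpha)| \leq 2$.} Then $\alpha$ lies in a rank-$2$ subsystem $\Phi_{ij} := \Phi \cap (\dbZ\alpha_i + \dbZ\alpha_j)$, which by 2-sphericity of $A$ is of finite type $A_1\times A_1$, $A_2$, $B_2$, or $G_2$. This reduces the problem to the rank-$2$ case, which must be treated separately by direct computation with the Chevalley commutator formulas. The types $A_1\times A_1$ and $A_2$ are straightforward (in $A_2$, the single identity $[x_{\alpha_1}(1), x_{\alpha_2}(u)] = x_{\alpha_1+\alpha_2}(\pm u)$ suffices). In $B_2$ and $G_2$ the commutator $[x_{\alpha_1}(r), x_{\alpha_2}(u)]$ simultaneously produces several factors $x_\gamma(\cdots)$ entangled with structure constants drawn from $\{1,2\}$ (in $B_2$) or $\{1,2,3\}$ (in $G_2$); the plan is to disentangle these factors by standard additive/bilinear manipulations---for instance, in $B_2$, the identity $[x_{\alpha_1}(r+r'), x_{\alpha_2}(u)]\,[x_{\alpha_1}(r), x_{\alpha_2}(u)]^{-1}\,[x_{\alpha_1}(r'), x_{\alpha_2}(u)]^{-1} = x_{2\alpha_1+\alpha_2}(\pm 2 r r' u)$---and then invoke the hypothesis that $R$ has no proper ideal of index $\leq M$ to conclude that the exponent ranges cover all of $R$.

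\textbf{Case 2: $|\mathrm{supp}(\alpha)| \geq 3$.} Here I find a decomposition $\alpha = \beta + \gamma$ with $\beta, \gamma \in \Phi^+$ real roots of strictly smaller height, such that $\alpha$ is the unique root of the form $p\beta+q\gamma$ with $p,q\geq 1$, and such that the Chevalley structure constant $C_{11,\beta,\gamma} = \pm 1$. The typical construction takes $\gamma = \alpha_k$ to be a simple root with $\la \alpha_k^\vee,\alpha\ra = 1$ and $\alpha+\alpha_k\notin\Phi$ (such an $\alpha_k$ exists by a combinatorial analysis of the 2-spherical Dynkin subdiagram on $\mathrm{supp}(\alpha)$: any $p\beta+q\alpha_k$ with $p\geq 2$ has coefficient $\geq 2$ at every simple root in $\mathrm{supp}(\beta)\setminus\{k\}$, which excludes membership in $\Phi$, while $q\geq 2$ is ruled out by $\alpha+\alpha_k\notin\Phi$). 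In affine-type situations where every $\alpha-\alpha_i$ turns out to be imaginary or negative---for example $\alpha = \alpha_0+2\alpha_1+\alpha_2$ in $\widetilde{A}_2$, where $\alpha-\alpha_1$ is the null root $\delta$---one instead uses a decomposition into two non-simple real roots, e.g. $\alpha_0+2\alpha_1+\alpha_2 = (\alpha_0+\alpha_1)+(\alpha_1+\alpha_2)$. With the decomposition in hand, relation (R2) collapses to $[x_\beta(r), x_\gamma(u)] = x_\alpha(\pm r u)$; the inductive hypothesis gives $X_\beta, X_\gamma \subseteq H$, so the commutator lies in $H$, and setting $r=1$ yields $X_\alpha \subseteq H$.

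The principal obstacle is Case~1: the disentanglement of simultaneous commutator factors in the non-simply-laced rank-$2$ types $B_2$ and (especially) $G_2$, which is precisely where the hypothesis on the ideal structure of $R$ enters the argument in an essential way. Case~2 is essentially a combinatorial reduction of the general situation to the rank-$2$ setting, and the existence of the required ``clean'' decomposition---uniqueness of $\alpha$ in the commutator interval together with $C_{11,\beta,\gamma}=\pm 1$---should follow routinely from the structure theory of 2-spherical Kac-Moody root systems.
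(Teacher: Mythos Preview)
Your overall strategy—settle $d=2$ by direct computation and then induct on $\height(\alpha)$—is exactly what the paper does; it cites \cite[Lemma~11.1]{Al} for the rank-$2$ case and \cite[Lemma~6.2]{CER} for the inductive reduction. The substantive gap is in your Case~2.

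First, the simple-root decomposition you propose need not exist: for your own example $\alpha = \alpha_0 + 2\alpha_1 + \alpha_2$ in $\widetilde{A}_2$ one computes $\la\alpha_k^\vee,\alpha\ra = -1,\, 2,\, -1$ for $k = 0,1,2$, so no $\alpha_k$ has pairing $1$, and indeed $\alpha - \alpha_k$ is never a positive real root. Second, your uniqueness argument—``coefficient $\geq 2$ at every simple root in $\mathrm{supp}(\beta)\setminus\{k\}$ excludes membership in $\Phi$''—is not a valid criterion outside spherical type, where real roots routinely have arbitrarily large coefficients. What you actually need is that $\{\beta,\alpha_k\}$ is prenilpotent and spans a rank-$2$ subsystem of type at most $G_2$, which would follow from Proposition~\ref{prenil_basic}(d) once you bound the product $\la\alpha_k^\vee,\beta\ra\la\beta^\vee,\alpha_k\ra$; but that is a different argument from the one you wrote, and it still does not address existence.

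Your fallback to non-simple decompositions is the right idea, but one ad hoc example plus ``should follow routinely'' is not a proof. Establishing that every positive real root $\alpha$ of height $\geq 2$ in a $2$-spherical system admits a prenilpotent pair $\{\beta,\gamma\}$ of smaller-height positive real roots with $X_\alpha \subseteq \la X_\beta, X_\gamma\ra$ is precisely the combinatorial content supplied by \cite[Lemma~6.2]{CER}, and it is the missing ingredient in your Case~2.
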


\begin{proof} If $d=2$, Theorem~\ref{thm:Al1} is part of the assertion of \cite[Lemma~11.1]{Al}. In the general case let $H$ denote the subgroup of $\dbG_A^+(R)$ generated by $\{X_{\alpha_i}\}_{i=1}^d$. It is easy to show that $X_{\alpha}\subseteq H$
for every $\alpha\in\Phi^+$ by induction on the height of $\alpha$ using the result in the case $d=2$ and \cite[Lemma~6.2]{CER}.
\end{proof}

The proof of \cite[Lemma~11.1]{Al} mentioned above uses the precise commutation relations between positive root subgroups in the rank $2$ case. Below we list 
those relations since most of them will
be explicitly used later in the paper. In all four cases below we denote the simple roots of $\Phi$ by $\alpha$ and $\beta$ with $\alpha$ being the long root. We shall only list non-trivial commutator relations, that is, relations between root subgroups which do not commute.

{\it Case 1: $A=\begin{pmatrix} 2 & 0\\ 0& 2\end{pmatrix}$, $\Phi=A_1\times A_1$, $\Phi^+=\{\alpha,\beta\}$}. In this case
$X_{\alpha}$ and $X_{\beta}$ commute.

{\it Case 2: $A=\begin{pmatrix} 2 & -1\\ -1& 2\end{pmatrix}$, $\Phi=A_2$, $\Phi^+=\{\alpha,\beta,\alpha+\beta\}$}. In this case
\begin{align}
[x_{\alpha}(r),x_{\beta}(s)]=x_{\alpha+\beta}(rs)
\end{align}(for a suitable choice of Chevalley basis).

{\it Case 3: $A=\begin{pmatrix} 2 & -2\\ -1& 2\end{pmatrix}$, $\Phi=B_2$, $\Phi^+=\{\alpha,\beta,\alpha+\beta,\alpha+2\beta\}$}. In this case we have the following relations:
\begin{align}
\label{comm rel b2 one}[x_\alpha(r), x_{\beta}(s)]&= x_{\alpha+\beta}(rs)
x_{\alpha+2\beta}(rs^2)\\
\label{comm rel b2 two}[x_{\alpha+\beta}(r),x_\beta(s)] &= x_{\alpha+2\beta}(2rs).
\end{align}

{\it Case 4: $A=\begin{pmatrix} 2 & -3\\ -1& 2\end{pmatrix}$, $\Phi=G_2$, $\Phi^+=\{\alpha,\beta,\alpha+\beta,\alpha+2\beta,\alpha+3\beta,2\alpha+3\beta\}$}. In this case we have the following relations:
\begin{align}
\label{comm rel g2 one}[x_\alpha(r),x_\beta(s)] &=x_{\alpha+\beta}(rs)x_{\alpha+2\beta}(rs^2)x_{\alpha+3\beta}(rs^3)x_{2\alpha+3\beta}(r^2s^3),\\
\label{comm rel g2 two}[x_{\alpha+\beta}(r),x_{\beta}(s)] &=x_{\alpha+2\beta}(2rs)x_{\alpha+3\beta}(3rs^2)x_{2\alpha+3\beta}(3r^2s),\\
\label{comm rel g2 three}[x_{\alpha+2\beta}(r),x_{\beta}(s)] &=x_{\alpha+3\beta}(3rs),\\
\label{comm rel g2 four}[x_{\alpha+2\beta}(r),x_{\alpha+\beta}(s)] &=x_{2\alpha+3\beta}(3rs),\\
\label{comm rel g2 five}[x_{\alpha+3\beta}(r),x_{\alpha}(s)] &=x_{2\alpha+3\beta}(-rs).
\end{align}

\section{Property $(T)$}

We start by recalling the definition of property $(T)$ as well as the definition of relative property $(T)$
in the sense of \cite{Co2}. In the definitions below we allow $G$ to be an arbitrary topological group; however, we will deal
primarily with discrete groups, with Theorem~\ref{KMreals} being the only exception. 
For a general introduction to property $(T)$ we refer the reader to \cite{BHV}.

\begin{Definition}\rm
Let $G$ be a group and $S$ a subset of $G$.
\begin{itemize}
\item[(a)] Let $V$ be a unitary representation of $G$ and $\eps>0$. A vector $v\in V$ is called
$(S,\eps)$-invariant if $\|s v-v\|< \eps\| v\|$ for all $s\in S$.
\item[(b)] The Kazhdan constant $\kappa(G,S)$ is the largest $\eps\geq 0$ such that
if $V$ is any unitary representation of $G$ which contains an $(S,\eps)$-invariant vector, 
then $V$ contains a nonzero $G$-invariant vector.
\item[(c)] $S$ is called a {\it Kazhdan subset} of $G$ if $\kappa(G,S)>0$.
\item[(d)] $G$ has {\it property $(T)$} if it has a compact Kazhdan subset.
\end{itemize}
\end{Definition}

\begin{Definition}\rm

Let $G$ be a group and $B$ a subset of $G$.  The pair $(G,B)$ is said to have  {\it relative property $(T)$}
if there exist a compact subset $S$ of $G$  and a function $f:\R_{>0}\to \R_{>0}$ such that if $V$ is any
unitary representation of $G$ and $v\in V$ satisfies  $\|s v-v\|\le f(\eps) \|v\| $ for every $s\in S$, then
$\|b v-v\|\le \eps \|v\| $ for every $b\in B$.
\end{Definition}

In the case when $B$ is a normal subgroup, the above definition is equivalent to the following one
(which is the original definition of relative property $(T)$ -- see, e.g., \cite{BHV}):

\begin{Definition}\rm  
	\label{normal subgroup rel t}
Let $B$ be a normal subgroup of a group $G$.
\begin{itemize}
\item[(a)] A subset $S$ of $G$ is called a {\it relative Kazhdan subset} of the pair $(G,B)$
if there exists $\eps>0$ such that whenever a unitary representation $V$ of $G$ contains an $(S,\eps)$-invariant
vector, it must also contain a nonzero $B$-invariant vector. Any pair $(S,\eps)$ with this property is called a {\it Kazhdan pair} of $(G,B)$.

\item[(b)] The pair $(G,B)$ has relative property $(T)$	if it has a compact relative Kazhdan subset.
\end{itemize}
\end{Definition}

The only result about relative property $(T)$ which we will explicitly use in the proof of Theorem~\ref{thm:main} is the following
straightforward lemma.

\begin{Lemma}
\label{relativeT_trivial} Let $G$ be a group and $K$ a subgroup of $G$ with property $(T)$.
Then for any subset $H$ of $K$, the pair $(G,H)$ has relative property $(T)$.
\end{Lemma}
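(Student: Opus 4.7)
The plan is to reduce the claim to the standard ``projection trick'' for Kazhdan groups. Because $K$ has property $(T)$, it has a compact Kazhdan subset $S_K \subseteq K$ with some Kazhdan constant $\eps_0 > 0$. I will take $S = S_K$ as the compact subset of $G$ in the definition of relative property $(T)$, and show that the function $f(\eps) = \eps \eps_0/2$ works.

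Let $V$ be a unitary representation of $G$, and suppose $v \in V$ satisfies $\|sv - v\| \leq f(\eps)\|v\|$ for every $s \in S_K$. Decompose $V = V^K \oplus (V^K)^{\perp}$, where $V^K$ is the closed subspace of $K$-invariant vectors. Since $K$ acts unitarily, both summands are $K$-invariant, so $(V^K)^{\perp}$ is itself a unitary $K$-representation, and it contains no nonzero $K$-fixed vector. By property $(T)$ of $K$ applied to the subrepresentation $(V^K)^{\perp}$, any nonzero vector $w \in (V^K)^{\perp}$ must satisfy $\max_{s \in S_K}\|sw - w\| \geq \eps_0 \|w\|$.

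Write $v = v_1 + v_0$ with $v_1 \in V^K$ and $v_0 \in (V^K)^{\perp}$. For $s \in S_K \subseteq K$, $sv_1 = v_1$, so $sv - v = sv_0 - v_0$. Applying the previous inequality to $w = v_0$ (if nonzero) gives
\[
\eps_0 \|v_0\| \;\leq\; \max_{s \in S_K}\|sv_0 - v_0\| \;=\; \max_{s \in S_K}\|sv - v\| \;\leq\; f(\eps)\|v\|,
\]
so $\|v_0\| \leq (f(\eps)/\eps_0)\|v\|$. (The bound is trivial if $v_0 = 0$.) Now for any $h \in H \subseteq K$, the same cancellation yields $hv - v = hv_0 - v_0$, hence
\[
\|hv - v\| \;\leq\; 2\|v_0\| \;\leq\; \frac{2f(\eps)}{\eps_0}\|v\| \;=\; \eps\|v\|,
\]
which is exactly what is needed for relative property $(T)$ of $(G,H)$.

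There is no real obstacle here; the only point requiring a small amount of care is verifying that $(V^K)^{\perp}$ is $K$-invariant (which uses that $K$ acts by unitaries) so that property $(T)$ of $K$ can be applied to it directly. Note that no assumption on $H$ beyond $H \subseteq K$ is used, and the compact subset $S$ and function $f$ depend only on $K$ (via $S_K$ and $\eps_0$), not on $H$.
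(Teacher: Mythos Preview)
Your proof is correct and, once unpacked, is essentially the paper's argument made explicit. The paper proceeds more abstractly---noting that property~$(T)$ for $K$ immediately gives relative~$(T)$ for $(K,K)$ via the normal-subgroup definition, and that relative~$(T)$ in the Cornulier sense is monotone under enlarging the ambient group and shrinking the target subset---while your projection onto $V^K$ is exactly the standard argument behind the equivalence of the two definitions that the paper invokes without proof.
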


\begin{proof} Since $K$ has $(T)$, it is clear from the second definition of relative property $(T)$ that
the pair $(K,K)$ has relative $(T)$. And it is clear from the first definition that if a pair $(A,B)$
has relative $(T)$, then for any overgroup $A'\supseteq A$ and subset $B'\subseteq B$, the pair
$(A',B')$ has relative $(T)$. In particular, $(G,H)$ has relative $(T)$.
\end{proof} 

 In order to prove our main theorem we will use the ``almost orthogonality'' criterion for property $(T)$ based on the notion of orthogonality constant between subgroups of the same group. This method was originally introduced by Dymara and Januskieiwcz in \cite{DJ} and developed further in 
\cite{EJ,EJK,Ka2,Op2}.

\begin{Definition}\rm Let $H$ and $K$ be subgroups of the same group, and let $G=\la H,K\ra$ be the group
generated by them. 
\begin{itemize}
\item[(i)] Given a unitary representation $V$ of $G$, let $V^H$ and $V^K$ denote the subspaces
of $H$-invariant (resp. $K$-invariant) vectors in $V$. The orthogonality constant $orth(H,K;V)$
is defined by $$orth(H,K;V)=\sup\{|\la v,w\ra|: v\in V^H, w\in V^K, \|v\|=\|w\|=1\}.$$
\item[(ii)] The orthogonality constant $orth(H,K)$ is the supremum of the quantities $orth(H,K;V)$
where $V$ ranges over all unitary representations of $G$ without invariant vectors.
\end{itemize}
\end{Definition} 

We will use the following form of the almost orthogonality criterion. 

\begin{Theorem}\rm(\cite[Theorem~1.2]{EJ})
\label{propT_criterion}
Let $G$ be a discrete group and $H_1,\ldots, H_d$ subgroups of $G$. Suppose that $orth(H_i,H_j)<\frac{1}{d-1}$
for any $i\neq j$ and the pair $(G,H_i)$ has relative property $(T)$ for each $i$. Then
$G$ has property $(T)$.
\end{Theorem}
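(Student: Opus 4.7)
The plan is to show that $G$ admits a compact Kazhdan set by combining the relative-$(T)$ hypotheses with the orthogonality estimate in a standard averaging argument. Fixing any unitary representation $V$ of $G$ with $V^G=0$, I will produce a compact $S\subset G$ and $\eps>0$, depending only on the given data, such that $V$ has no $(S,\eps)$-invariant unit vector. For each $i$, relative property $(T)$ of $(G,H_i)$ supplies a compact $S_i\subset G$ and a function $f_i\colon\R_{>0}\to\R_{>0}$ with the property that every $(S_i,f_i(\eta))$-invariant vector $v$ satisfies $\|hv-v\|\leq\eta\|v\|$ for all $h\in H_i$. I will take $S=\bigcup_i S_i$; for a target $\eta>0$ to be fixed at the end, setting $\delta=\min_i f_i(\eta)$, any unit $(S,\delta)$-invariant vector $v\in V$ is automatically $(H_i,\eta)$-almost-invariant for every $i$.

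The next step is to convert this uniform almost-invariance into actual closeness to the $H_i$-fixed subspace. Let $P_i$ be the orthogonal projection onto $V^{H_i}$ and set $v_i=P_iv$. I will apply the standard ergodic argument: the vector $u:=v-v_i$ lies in $(V^{H_i})^\perp$, which as an $H_i$-representation contains no invariant vector, so $0$ lies in the closed convex hull of the $H_i$-orbit of $u$. Since $\|hu-u\|=\|hv-v\|\leq\eta$ for every $h\in H_i$, every element of that convex hull lies within $\eta$ of $u$, forcing $\|u\|\leq\eta$. Consequently $\|v_i\|\geq 1-\eta$, and therefore $\sum_{i=1}^{d}\|v_i\|^2\geq d(1-\eta)^2$.

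The core of the proof is the orthogonality bound. Set $c=\max_{i\neq j}\orth(H_i,H_j)$, so $c<1/(d-1)$. For each pair $i\neq j$, decomposing $V$ as a representation of $\la H_i,H_j\ra$ into the subspace $V^{\la H_i,H_j\ra}$ of common fixed vectors and its orthogonal complement, and applying the definition of $\orth$ to the latter (which has no $\la H_i,H_j\ra$-invariant vector), I expect to obtain $|\la v_i,v_j\ra|\leq c\|v_i\|\|v_j\|$ after carefully absorbing the contribution of the common fixed subspace. Expanding $\|\sum_i v_i\|^2$ and using this bound together with Cauchy--Schwarz in the form $\sum_{i\neq j}\|v_i\|\|v_j\|\leq(d-1)\sum_i\|v_i\|^2$, I get $\|\sum_i v_i\|^2\leq(1+c(d-1))\sum_i\|v_i\|^2$. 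Combined with the elementary inequality $\sum_i\|v_i\|^2=\la v,\sum_i v_i\ra\leq\|\sum_i v_i\|$, this forces $\sum_i\|v_i\|^2\leq 1+c(d-1)<2\leq d$, which contradicts the lower bound $\sum_i\|v_i\|^2\geq d(1-\eta)^2$ once $\eta$ is chosen small enough. Hence no such $v$ exists and $(S,\delta)$ is a Kazhdan pair.

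I expect the main obstacle to be the bookkeeping for the common fixed subspaces $V^{\la H_i,H_j\ra}$: the orthogonality constant $\orth(H_i,H_j)$ is defined via representations of $\la H_i,H_j\ra$ without $\la H_i,H_j\ra$-invariant vectors, but our $V$ may easily have such invariants, since only $V^G=0$ is assumed. A careful decomposition (or inductive reduction over the number of pairs for which $V^{\la H_i,H_j\ra}\neq 0$) is required to either eliminate or absorb these contributions, and this is precisely the point where the stronger hypothesis $c<1/(d-1)$, rather than the naive-looking $c<1$, buys room for the bound to close.
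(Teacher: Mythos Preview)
The paper does not prove this result; it is quoted from \cite[Theorem~1.2]{EJ}. Your overall architecture---use relative $(T)$ to get $(H_i,\eta)$-almost-invariance, convert this to $\|v-P_iv\|\leq\eta$ via the convex-hull argument, and then derive a contradiction from an upper bound on $\sum_i\|P_iv\|^2$---is the skeleton of the argument in \cite{EJ}, and your Steps~1, 2 and~4 are fine. The difficulty is exactly where you place it, but it is a genuine gap rather than bookkeeping.

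The inequality you hope for, $|\langle v_i,v_j\rangle|\leq c\,\|v_i\|\,\|v_j\|$, is simply false whenever $W_{ij}:=V^{\langle H_i,H_j\rangle}=V^{H_i}\cap V^{H_j}\neq\{0\}$: for any unit $v\in W_{ij}$ one has $v_i=v_j=v$ and $\langle v_i,v_j\rangle=1$. Writing $v_i=w+u_i$, $v_j=w+u_j$ with $w=P_{W_{ij}}v$ and $u_i,u_j\perp W_{ij}$, the hypothesis yields only $\langle v_i,v_j\rangle\leq\|w\|^2+c\,\|u_i\|\,\|u_j\|$; if you absorb $\|w\|^2$ via $\|w\|^2\leq\tfrac12(\|v_i\|^2+\|v_j\|^2)$ and sum over all ordered pairs, the $(1-c)$-factors cancel and you recover exactly the trivial estimate $\|\sum_i v_i\|^2\leq d\sum_i\|v_i\|^2$, so the chain does not close. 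Your proposed ``inductive reduction over the number of pairs with $W_{ij}\neq 0$'' does not work either, since $W_{ij}$ is not $G$-invariant and cannot be split off from $V$. What actually rescues the argument is the extra information that all the $v_i$ lie within $2\eta$ of one another: combining $\|v_i-v_j\|\leq 2\eta$ with the Friedrichs-angle inequality $\|(v_i-w)-(v_j-w)\|^2\geq(1-c)\bigl(\|v_i-w\|^2+\|v_j-w\|^2\bigr)$ forces $v_i$ to lie within $O(\eta/\sqrt{1-c})$ of $W_{ij}$ for every $j$, and one then iterates down to $\bigcap_i V^{H_i}$ (equivalently, one proves a spectral bound on the averaged projection $\tfrac1d\sum_i P_i$). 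A minor further omission: you need $\langle H_1,\dots,H_d\rangle=G$ so that $\bigcap_i V^{H_i}=V^G=\{0\}$; this is implicit in \cite{EJ} and holds in every application in the present paper.
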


The following lemma collects two important cases where the orthogonality constant is small.

\begin{Lemma}
\label{lem:orthconst}
The following hold:
\begin{itemize}
\item[(a)] (see \cite[Lemma~3.4]{EJ}) If $H$ and $K$ are subgroups of the same group, and one of them normalizes the other (e.g., if they commute), then $\orth(H,K)=0$.
\item[(b)] (special case of \cite[Corollary~4.7]{EJ}) Let $R$ be a countable associative ring and $m(R)$ the smallest index of a proper left ideal of $R$. Let $G=Heis(R)$ be the Heisenberg group
over $R$, that is, the group of $3\times 3$ upper-unitriangular matrices over $R$, let $H=E_{12}(R)$ and $K=E_{23}(R)$. Then 
$\orth(H,K)\leq \frac{1}{\sqrt{m(R)}}$.
\end{itemize}
\end{Lemma}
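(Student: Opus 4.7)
For part (a), assume without loss of generality that $K$ normalizes $H$. Then $H$ is normal in $G=\langle H,K\rangle$, so $V^H$ is a $G$-invariant subspace, and the orthogonal projection $P\colon V\to V^H$ commutes with the $G$-action. Given $v\in V^H$ and $w\in V^K$, the vector $Pw$ is still $K$-invariant (because $P$ commutes with $K$) and lies in $V^H$, hence is invariant under all of $G$. Since $V$ has no nonzero $G$-invariant vector, $Pw=0$, so $\langle v,w\rangle=\langle Pv,w\rangle=\langle v,Pw\rangle=0$, giving $\orth(H,K)=0$.

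For part (b), let $Z=E_{13}(R)\cong(R,+)$, the center of $G=Heis(R)$. The plan is to decompose $V$ using the action of $Z$ and argue character by character. First write $V=V^Z\oplus V_Z^\perp$. On $V^Z$ the $G$-action factors through the abelian quotient $G/Z\cong R\oplus R$, and by the spectral theorem for LCA groups, $V^Z$ decomposes over $\widehat R\times\widehat R$; the $H$-invariant part is supported on $\{1\}\times\widehat R$, while the $K$-invariant part is supported on $\widehat R\times\{1\}$. Since $V^G=0$, the point $(1,1)$ carries no mass, so these two supports are disjoint and $(V^Z)^H\perp(V^Z)^K$. Thus the $V^Z$-components contribute nothing to $\langle v,w\rangle$, and we may assume $v,w\in V_Z^\perp$.

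Next, decompose $V_Z^\perp=\int V_\chi\,d\mu(\chi)$ over nontrivial characters $\chi$ of $Z$, and for each such $\chi$ further decompose $V_\chi=\int V_{\chi,\alpha}\,d\nu(\alpha)$ over characters $\alpha$ of $H$. The commutator identity $[E_{12}(a),E_{23}(b)]=E_{13}(ab)$ shows that on $V_\chi$ the operator corresponding to $E_{23}(b)$ carries $V_{\chi,\alpha}$ isometrically onto $V_{\chi,\alpha\cdot\chi_b}$, where $\chi_b(a):=\chi(ab)$. The key arithmetic input is that
\[
\mathrm{Ann}(\chi)\ :=\ \{b\in R:\chi(ab)=1\text{ for all }a\in R\}
\]
is a proper left ideal of $R$ (nontrivial on $Z$ combined with $1\in R$ forces this), so $|R/\mathrm{Ann}(\chi)|\ge m(R)$. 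Hence the $K$-orbit inside $V_\chi$ of any nonzero component of $w_\chi\in V_\chi^K$ in $V_{\chi,1}$ passes through at least $m(R)$ distinct $\alpha$-eigenspaces, and since eigenspaces for distinct characters are pairwise orthogonal and $K$ acts isometrically, we obtain $\|w_{\chi,1}\|^2\le\|w_\chi\|^2/m(R)$.

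Finally, since $v_\chi\in V_\chi^H=V_{\chi,1}$, the Cauchy-Schwarz inequality gives $|\langle v_\chi,w_\chi\rangle|=|\langle v_\chi,w_{\chi,1}\rangle|\le\|v_\chi\|\,\|w_\chi\|/\sqrt{m(R)}$; integrating over $\chi$ against $\mu$ and applying Cauchy-Schwarz once more yields $|\langle v,w\rangle|\le\|v\|\,\|w\|/\sqrt{m(R)}$, which is the desired estimate. The main obstacle is bookkeeping the continuous spectral decompositions when $R$ is infinite, so that the orbit-counting step on characters of $H$ really produces an orthogonal ``splitting'' of $w_\chi$ with at least $m(R)$ pieces of equal norm; this is handled rigorously via the projection-valued spectral measures of the abelian groups $Z$ and $H$, together with the fact that the $K$-translates $\chi_b$ of the trivial character of $H$ form a subgroup of $\widehat H$ of cardinality $|R/\mathrm{Ann}(\chi)|$.
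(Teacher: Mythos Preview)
Your proof of (a) is correct and is the standard argument (essentially the one in \cite[Lemma~3.4]{EJ}).

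For (b), the paper does not give a self-contained proof of Lemma~\ref{lem:orthconst}(b) at that point---it simply cites \cite{EJ}---but it does reprove the statement later as Corollary~\ref{cor:orth}(b), via the combination of Theorem~\ref{orth_step} and Lemma~\ref{lem:comrel}(a). That route is genuinely different from yours. The paper first reduces to \emph{irreducible} representations (via \cite[Claim~10.7]{EJK}), uses Schur's lemma to make the center $Z$ act by a single character, applies the orbit argument of Lemma~\ref{lem:comrel}(a) to bound $\dim V\ge m(R)$, and then invokes the Hilbert--Schmidt operator trick of Theorem~\ref{orth_step} to convert the dimension bound into $\orth(X,Y)\le 1/\sqrt{m(R)}$. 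Your argument bypasses both the reduction to irreducibles and the Hilbert--Schmidt machinery: you work directly with the joint spectral measure of the commuting abelian subgroups $Z$ and $H$, and run the same orbit-counting argument on the conditional measures over $\widehat Z$. The core computation---that the $K$-orbit of the trivial $H$-character inside a fixed $Z$-character $\chi$ has at least $|R/\mathrm{Ann}(\chi)|\ge m(R)$ elements---is identical in both approaches; only the packaging differs.

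Your version is more elementary in that it avoids the appeal to decomposition into irreducibles, but it trades that for some measure-theoretic care: the cleanest way to make the ``splitting of $w_\chi$ into $m(R)$ equal-norm pieces'' precise is not to iterate direct integrals, but to take the single projection-valued measure $E$ on $\widehat Z\times\widehat H$, observe that $E_{23}(b)$ conjugates $E$ by the fiberwise shift $(\chi,\alpha)\mapsto(\chi,\alpha\chi_b)$, and disintegrate the scalar measure $S\mapsto\|E(S)w\|^2$ over $\widehat Z$; invariance of the conditionals under these shifts then gives $\|E(\widehat Z\times\{1\})w\|^2\le\|w\|^2/m(R)$ immediately. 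With that adjustment your argument is complete and correct.
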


Here are two basic examples where property $(T)$ follows immediately from Theorem~\ref{propT_criterion} and 
Lemma~\ref{lem:orthconst}:

\begin{Example} 
\label{ex1}
Let $A$ be a $2$-spherical $d\times d$ GCM with simply-laced Dynkin diagram and $R$ any finite commutative ring which has no proper ideals of index at most $(d-1)^2$. Then the positive unipotent subgroup $\dbG_A^+(R)$ has property $(T)$.
\end{Example}

Here we let $\{H_1,\ldots, H_d\}$ be the simple root subgroups of $G=\dbG_A^+(R)$. The groups $H_i$ are
finite since $R$ is finite, so the assumption that  $(G,H_i)$ has relative property $(T)$ holds trivially.
Since $Dyn(A)$ is simply-laced, for any $i\neq j$ either $H_i$ and $H_j$ commute (if $a_{ij}=0$)
or there is an isomorphism $\la H_i, H_j\ra\to Heis(R)$ which sends $H_i$ to $E_{12}(R)$ and $H_j$ to $E_{23}(R)$
(if $a_{ij}=-1$). Hence $\orth(H_i,H_j)<\frac{1}{d-1}$ by Lemma~\ref{lem:orthconst}.

\begin{Example} 
\label{ex2}
Now let $d\geq 3$ be any integer and $R$ any finitely generated associative ring with no proper ideals of index at most $(d-1)^2$. Let $G=EL_d(R)$, the subgroup of $GL_d(R)$ generated by elementary matrices. Then
$G$ has property $(T)$.
\end{Example}
As proved in \cite{EJ}, the group $EL_d(R)$ actually has property $(T)$ without any restrictions on indices
of ideals in $R$, but this result requires a much more general version of the almost orthogonality method.
To deduce the result stated in Example~\ref{ex2} directly from Theorem~\ref{propT_criterion} and 
Lemma~\ref{lem:orthconst} we set $H_i=E_{i,i+1}(R)$ for $1\leq i\leq d-1$ and $H_d=E_{d,1}(R)$.
Then $\orth(H_i,H_j)<\frac{1}{d-1}$ as in Example~3.
The fact that the pairs $(G,H_i)$ have relative property $(T)$ follows from Kassabov's theorem \cite[Theorem~1.2]{Ka1}.

Note that the argument in Example~\ref{ex2} remains valid if we replace $EL_d(R)$ by the Steinberg group $St_{d}(R)$. Thus it also proves that the Kac-Moody group $\dbG_{A_{d-1}}(R)$ has property $(T)$ since $\dbG_{A_{d-1}}(R)$ is a quotient of $St_{d}(R)$ as observed in \S~2. As we will see in \S~5, our general argument for property $(T)$ for Kac-Moody groups over rings will in some sense generalize Example~\ref{ex2} even though the collection of
subgroups $\{H_i\}$ to which Theorem~\ref{propT_criterion} will be applied in the case of groups of type
$A_{d-1}$ is different from the one in Example~\ref{ex2}.

\section{Orthogonality constants in Chevalley groups of rank $2$}

{\bf Notation:} Let $G$ be a group generated by subgroups $X$ and $Y$ and let $H$ be another subgroup of $G$. 
Let $m(H,X,Y)\in\dbN\cup\{\infty\}$ be the minimal dimension of an irreducible representation $V$ of $G$
such that $H$ acts non-trivially on $V$ and the subspaces $V^X$ and $V^Y$ are both nonzero.

The following result is a variation of \cite[Theorem~10.8]{EJK} and is proved by essentially the same argument.

\begin{Theorem}
\label{orth_step}
Let $G$ be a countable
group generated by subgroups $X$ and $Y$. Let $H$ be a subgroup of $Z(G)$, denote by $X'$ and $Y'$ the images of $X$ and $Y$ in $G/H$,
respectively, and let $m=m(H,X,Y)$. Then
$\orth(X,Y)\leq \sqrt{\orth(X', Y')+\frac{1}{m}}$.
\end{Theorem}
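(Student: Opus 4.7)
The plan is to exploit the centrality of $H$ to decompose the representation and apply a Hilbert--Schmidt/tensor trick on the part where $H$ acts nontrivially. Fix a unitary representation $V$ of $G$ without $G$-invariant vectors and unit vectors $v\in V^X$, $w\in V^Y$; my goal is to show $|\langle v,w\rangle|^2\le \orth(X',Y')+1/m$. Since $H\subseteq Z(G)$ is abelian, $V$ splits as $V=V_0\oplus V_1$ with $V_0=V^H$ and $V_1=V_0^\perp$, both $G$-invariant (because $H$ is central). Writing $v=v_0+v_1$, $w=w_0+w_1$, one has $\langle v,w\rangle=\langle v_0,w_0\rangle+\langle v_1,w_1\rangle$, where $v_i\in V_i^X$ and $w_i\in V_i^Y$.

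The $V_0$ piece is immediate: $V_0$ factors through $G/H$ and inherits the absence of invariants, so $|\langle v_0,w_0\rangle|\le \orth(X',Y')\|v_0\|\|w_0\|$. For $V_1$, I would use the identity
\[
|\langle v_1,w_1\rangle|^2 \;=\; \bigl\langle |v_1\rangle\langle v_1|,\;|w_1\rangle\langle w_1|\bigr\rangle_{HS} \quad\text{in }\End(V_1)\cong V_1\otimes \overline{V_1},
\]
with $G$ acting by conjugation. The key point is that centrality forces $H$ to act \emph{trivially} on $\End(V_1)$, so $\End(V_1)$ is in fact a $G/H$-representation; moreover $|v_1\rangle\langle v_1|$ is $X$-invariant and $|w_1\rangle\langle w_1|$ is $Y$-invariant. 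Splitting $\End(V_1)=\End(V_1)^G\oplus \End(V_1)^G_\perp$, the complement is a $G/H$-rep with no $G$-invariants, so its contribution to the HS inner product is at most $\orth(X',Y')\|v_1\|^2\|w_1\|^2$. For the $G$-invariant summand, decomposing $V_1$ into isotypic components $\bigoplus_\pi n_\pi\pi$ the contribution works out to $\sum_\pi \|v_{1,\pi}\|^2\|w_{1,\pi}\|^2/\dim\pi$; only irreducibles with $\pi^X\ne 0$ and $\pi^Y\ne 0$ contribute, and such irreducibles lie in $V_1$ so have $H$ acting nontrivially, forcing $\dim\pi\ge m$ by the definition of $m(H,X,Y)$. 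The sum is therefore bounded by $\|v_1\|^2\|w_1\|^2/m$.

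Combining the two bounds yields $|\langle v_1,w_1\rangle|^2\le (\orth(X',Y')+1/m)\|v_1\|^2\|w_1\|^2$, and the analogous estimate holds for $V_0$ (using $\orth^2\le \orth$ since $\orth\le 1$). A final Cauchy--Schwarz applied to $\|v_0\|\|w_0\|+\|v_1\|\|w_1\|\le \|v\|\|w\|=1$ then gives $|\langle v,w\rangle|\le \sqrt{\orth(X',Y')+1/m}$, and passing to the supremum over $V$, $v$, $w$ completes the proof. The main obstacle I anticipate is the irreducible decomposition step for general (infinite-dimensional, not necessarily type I) $V_1$: the formal sum over isotypic components must then be interpreted as a direct integral with respect to the center of the von Neumann algebra generated by $G$, and one has to verify that the dimension bound $\dim\pi\ge m$ is preserved in the measurable setup --- an issue that is essentially addressed in the proof of \cite[Theorem~10.8]{EJK} being followed here.
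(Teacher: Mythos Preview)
Your argument is correct and rests on the same Hilbert--Schmidt identity $|\la v,w\ra|^2=\la P_v,P_w\ra_{HS}$ that the paper uses, together with the observation that the conjugation action on $HS$ factors through $G/H$. The organization, however, is different. The paper first invokes \cite[Claim~10.7]{EJK} to reduce the whole question to \emph{irreducible} representations $V$ of $G$; once $V$ is irreducible, Schur's lemma gives $HS(V)^G=\dbC\cdot{\rm Id}$ and the computation $\|\pi_{HS(V)^G}(P_v)\|^2=1/\dim V$ is immediate, so the bound $|\la u,w\ra|^2\leq \orth(X',Y')+1/m$ drops out without any isotypic or direct-integral decomposition. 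Your route instead keeps $V$ arbitrary, splits $V=V^H\oplus (V^H)^\perp$, and pushes the difficulty to analyzing $\End(V_1)^G$ for a non-irreducible $V_1$; this is exactly the measure-theoretic obstacle you flag at the end, and it is precisely what Claim~10.7 is designed to absorb. In effect you are re-deriving that reduction inside the argument rather than quoting it up front. Both approaches give the same bound; the paper's ordering is shorter because the direct-integral work has already been packaged.
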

\begin{proof}
Let $\eps=\orth(X',Y')$.
By \cite[Claim~10.7]{EJK} is suffices to prove that $\orth(V^X,V^Y)\leq \sqrt{\eps+\frac{1}{m}}$
for every non-trivial {\it irreducible} representation $V$ of $G$.

Let us fix such a representation $V$. We can assume that $V^X$ and $V^Y$ are both nonzero (otherwise the result is trivial). If $H$ acts trivially on $V$, then $V$ is a representation of $G/H$, so there is nothing to prove. Thus,
we can assume that $H$ acts non-trivially, and therefore $\dim(V)\geq m$ by assumption.

Let $HS(V)$ denote the set of all
Hilbert-Schmidt  operators on $V$, that is, linear operators
$A:V\to V$ such that
$\sum_i \|A(e_i)\|^2$ is finite where $\{e_i\}$ is an
orthonormal basis  of $V$. Then $HS(V)$ is a Hilbert space with the
inner product given by
$$
\langle A,B\rangle=\sum_i \langle A(e_i),B(e_i)\rangle.
$$
It is easy to see that the set $HS(V)$ and the inner product do not depend on the choice of
$\{e_i\}$.

The representation of $G$ on $V$ yields the corresponding representation of $G$
on $HS(V)$, where an element $g\in G$ acts on $A\in HS(V)$  by
$(gA)(v)=gA(g^{-1}v).$  Since $H\subseteq Z(G)$ and $V$ is an irreducible representation of $G$, 
by Schur's lemma $H$ acts by scalars on $V$ and hence trivially on $HS(V)$. Thus, $HS(V)$
becomes a representation of $G/H$.

For a nonzero vector $v\in V$ let $P_v\in HS(V)$ denote the (orthogonal) projection onto $\dbC v$.
For a subspace $W$ of $HS(V)$ let  $\pi_W:HS(V)\to HS(V)$ denote the projection onto $W$.
The following results are proved in \cite{EJK}:
\begin{itemize}
\item[(a)] \cite[Lemma~10.1]{EJK}) $\la P_u,P_w\ra=|\la u,w\ra|^2$ for any unit vectors $u,w\in V$;
\item[(b)] \cite[Lemma~10.3]{EJK}) $\|\pi_{HS(V)^G}(P_v)|^2=\frac{1}{\dim(V)}$ for any unit vector $v\in V$.
\end{itemize}
The assertion of Theorem~\ref{orth_step} follows easily from these two results.
Indeed, take any unit vectors $u\in V^X$ and $w\in V^Y$. Then by (a)
$$|\la u,w\ra|^2=\la P_u,P_w\ra=\la \pi_{HS(V)^G}(P_u),\pi_{HS(V)^G}(P_w)\ra+\la \pi_{(HS(V)^G)^{\perp}}(P_u),\pi_{(HS(V)^G)^{\perp}}(P_w)\ra.$$
By (b) we have $$|\la \pi_{HS(V)^G}(P_u),\pi_{HS(V)^G}(P_w)\ra|\leq \|\pi_{HS(V)^G}(P_u)\|\cdot \|\pi_{HS(V)^G}(P_w)\|=\frac{1}{\dim(V)}\leq
\frac{1}{m}.$$ On the other hand, ${(HS(V)^G)}^{\perp}$ is a representation of $G/H$ without invariant vectors, so
$|\la \pi_{(HS(V)^G)^{\perp}}(P_u),\pi_{{(HS(V)^G)}^{\perp}}(P_w)\ra|\leq 
\eps\|\pi_{(HS(V)^G)^{\perp}}(P_u)\|\cdot\| \pi_{(HS(V)^G)^{\perp}}(P_w)\|\leq \eps$, which finishes the proof.
\end{proof}

\begin{Lemma} 
\label{lem:comrel}
Let $R$ be a countable commutative ring and $m(R)$ the smallest index of a proper ideal of $R$. Let $\Phi$ be an irreducible finite root
system of rank $2$, and let $\{\alpha,\beta\}$ be a base of $\Phi$, with $\alpha$ a long root. 

Define the group $G$ and its subgroups $X,Y$ and $H$ by one of the following:
\begin{itemize}
\item[(a)] $\Phi=A_2$, $G=\dbG_{\Phi}^+(R)$, $X=X_{\alpha}, Y=X_{\beta}$ and $H=X_{\alpha+\beta}$

\item[(b)] $\Phi=B_2$, $G=\dbG_{\Phi}^+(R)$, $X=X_{\alpha}, Y=X_{\beta}$ and
$H=X_{\alpha+2\beta}$. 

\item[(c)] $\Phi=G_2$, $G=\dbG_{\Phi}^+(R)$,
$X=X_{\alpha}, Y=X_{\beta}$ and $H=X_{\alpha+3\beta} X_{2\alpha+3\beta}$
\end{itemize}
In case (b) assume that $2$ is invertible in $R$, and in case (c)
assume that $3$ is invertible in $R$. Then (in each case) 
$m(H,X,Y)\geq m(R)$
\end{Lemma}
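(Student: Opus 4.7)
My plan is to prove the stronger statement that every irreducible unitary representation $V$ of $G$ in which $H$ acts non-trivially already satisfies $\dim V\ge m(R)$; the hypotheses $V^X\ne 0$ and $V^Y\ne 0$ from the definition of $m(H,X,Y)$ play no role in the lower bound. In each of the three cases I will exhibit an abelian normal subgroup $\widetilde H$ of $G$ containing $H$, decompose $V|_{\widetilde H}$ into characters, and use the commutation relations of Section~2 together with Clifford theory to show that the set of $\widetilde H$-characters appearing in the spectral decomposition of $V|_{\widetilde H}$ forms a single $G$-conjugation orbit in $\widehat{\widetilde H}$ of cardinality at least $m(R)$.

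My choices are $\widetilde H=X_\beta X_{\alpha+\beta}$ in case (a), $\widetilde H=X_{\alpha+\beta}X_{\alpha+2\beta}$ in case (b), and $\widetilde H=X_{\alpha+2\beta}X_{\alpha+3\beta}X_{2\alpha+3\beta}$ in case (c). In each instance abelianity is immediate because no sum of two of the listed roots is itself a root of the relevant rank-two system, and normality in $G$ follows by reading off from relations \eqref{comm rel b2 one}--\eqref{comm rel g2 five} that every commutator $[x_\gamma(t),x_\delta(u)]$ with $X_\delta\subseteq\widetilde H$ and $\gamma\in\{\alpha,\beta,\alpha+\beta,\alpha+2\beta\}$ again lies in $\widetilde H$. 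The key input is the following character-shift rule, proved by a one-line commutator manipulation: if $[x_\gamma(t),x_\delta(u)]=x_\varepsilon(ctu)$ for an integer $c$, with $X_\delta,X_\varepsilon\subseteq\widetilde H$, then the operator $x_\gamma(t)$ sends the $\widetilde H$-isotypic subspace with $X_\delta$-character $\psi$ and $X_\varepsilon$-character $\chi'$ to the $\widetilde H$-isotypic with $X_\delta$-character $\psi\cdot\chi'_{-ct}$, where $\chi'_s(u):=\chi'(su)$. Consequently the $X_\gamma$-orbit of any such $\widetilde H$-character has cardinality $|R/J|$, with
\[
J=\{t\in R: ctR\subseteq\ker\chi'\}.
\]
Commutativity of $R$ makes $J$ an ideal, the invertibility of $2$ in case (b) and of $3$ in case (c) lets us absorb $c$ and identify $J$ with $\{t:tR\subseteq\ker\chi'\}$, and $J$ is proper precisely when $\chi'\ne 1$, yielding $|R/J|\ge m(R)$.

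It remains to produce a non-trivial $\chi'$ in each case. In cases (a) and (b), $H$ is central in $G$ and acts on $V$ by a non-trivial character $\chi$; take $\chi':=\chi$, $\gamma:=\alpha$ in case (a) and $\gamma:=\beta$ in case (b). The main obstacle is case (c), where $H=X_{\alpha+3\beta}X_{2\alpha+3\beta}$ is abelian normal but only the highest-root factor $X_{2\alpha+3\beta}$ is central in $G$; one splits into subcases. If $X_{2\alpha+3\beta}$ acts on $V$ by a non-trivial character $\chi_1$, take $\chi':=\chi_1$, $\gamma:=\alpha$, invoking relation \eqref{comm rel g2 five}. Otherwise $X_{2\alpha+3\beta}$ acts trivially, $V$ descends to $G/X_{2\alpha+3\beta}$, inside which $X_{\alpha+3\beta}$ becomes central and acts by a non-trivial character $\psi_1$ (since $H$ itself acts non-trivially on $V$); take $\chi':=\psi_1$, $\gamma:=\beta$, invoking \eqref{comm rel g2 three}. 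Once the subcase analysis in case (c) is in place, the remainder is routine commutator bookkeeping.
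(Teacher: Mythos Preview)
Your argument is correct and yields the stronger bound you advertise, but it is organized differently from the paper's proof. The paper never introduces your enlarged abelian normal subgroup $\widetilde H$; instead it works directly with the central subgroup $H$, picks an explicit nonzero vector $v$ in $V^{X}$ (or $V^{Y}$), and uses the commutator relation to show that the translates $x_{\beta}(s)v$ (respectively $x_{\alpha+\beta}(r)v$, etc.) are eigenvectors for a single root subgroup with pairwise distinct characters indexed by $R/J$, hence linearly independent. So the hypotheses $V^{X}\neq 0$, $V^{Y}\neq 0$ are genuinely used in the paper's argument to produce a starting eigenvector and thereby avoid any appeal to spectral measures or Mackey/Clifford theory; the paper's proof is completely elementary for this reason. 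Your route trades that elementary eigenvector trick for a spectral--support argument and in return drops those two hypotheses. The commutator relations you invoke and the $G_2$ subcase split (whether or not $X_{2\alpha+3\beta}$ acts trivially) are exactly the ones the paper uses.

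One point of care: your sentence ``the set of $\widetilde H$-characters appearing in the spectral decomposition of $V|_{\widetilde H}$ forms a single $G$-conjugation orbit'' is not quite justified in the infinite-dimensional unitary setting---for countable discrete $G$ the projection-valued measure on $\widehat{\widetilde H}$ is $G$-ergodic but need not be atomic or supported on a single orbit. Fortunately you do not need that. What you actually use (and prove) is that in each case the relevant $X_\varepsilon$ is central, so \emph{every} character in the spectral support has $X_\varepsilon$-component equal to the fixed nontrivial $\chi'$; hence every $X_\gamma$-orbit inside the support has exactly $|R/J|\ge m(R)$ points. It follows that the support itself has at least $m(R)$ points, and therefore $\dim V\ge m(R)$ (if the support is finite the measure is automatically atomic on it; if infinite, $\dim V=\infty$). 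Phrase it that way and the gap disappears.
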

\begin{Remark} Case (a) of Lemma~\ref{lem:comrel} has already been established in \cite{EJ}; however, we have chosen to reproduce the proof as the arguments in other cases are similar, with additional technicalities involved.
\end{Remark}
\begin{proof} In each case we start with an arbitrary irreducible representation $V$ of $G$ with $V^X\neq \{0\}$ and $V^Y\neq \{0\}$ on which $H$ acts non-trivially. Our goal is to show that $\dim(V)\geq m(R)$.
Since $H\subseteq Z(G)$ by assumption, there exists a non-trivial character $\lam:H\to S^1$ such that each $h\in H$ acts on $V$ as the scalar $\lam(h)$.

(a) For brevity we set $\lam(r)=\lam(x_{\alpha+\beta}(r))$ for $r\in R$.
By assumption there exists nonzero $v\in V^X$. The commutator relation $[x_{\alpha}(r),x_{\beta}(s)]=x_{\alpha+\beta}(rs)$ (with $r,s\in R$ arbitrary) implies that 
$$x_{\alpha}(r) x_{\beta}(s) v=x_{\beta}(s)x_{\alpha}(r)[x_{\alpha}(r),x_{\beta}(s)] v=
\lam(rs) x_{\beta}(s)x_{\alpha}(r)v=\lam(rs)x_{\beta}(s) v.$$ 
Thus, for every $s\in R$, the vector $x_{\beta}(s) v$ is an eigenvector for $X$ with character $\lam_s: X\to S^1$ given by
$\lam_s(x_{\alpha}(r))=\lam(rs)$. Let $I=\{s\in R: \lam(rs)=0 \mbox{ for all }r\in R\}$. Then it is clear that $I$ is an ideal of $R$;
moreover, $I\neq R$ since $\lam$ is non-trivial, and therefore, $|R/I|\geq m(R)$. On the other hand, $\lam_s=\lam_t$ if and only if
$s\equiv t\mod I$, and therefore the number of distinct characters of the form $\lam_s$ is at least $m(R)$. Since eigenvectors corresponding
to distinct characters must be linearly independent, we conclude that $\dim(V)\geq m(R)$ as desired.

(b) This time we set $\lam(r)=\lam(x_{\alpha+2\beta}(r))$ for $r\in R$. Again choose any nonzero $v\in V^X$. Since  $2$ is invertible in $R$, the set $\{s\in R: \lam(2rs)=0 \mbox{ for all }r\in R\}$ is a proper ideal of $R$, and   arguing as in (a), this time using the relations
$[x_{\alpha+\beta}(r),x_\beta(s)] = x_{\alpha+2\beta}(2rs)$, we conclude that $\dim(V)\geq m(R)$.

(c) First assume that $X_{2\alpha+3\beta}$ acts non-trivially on $V$. Then the result
follows directly from (a) since there is an isomorphism between 
$\la X_{\alpha},X_{\alpha+3\beta},X_{2\alpha+3\beta}\ra$ and $St_{A_2^+}(R)$ which sends
$X_{2\alpha+3\beta}$ to $X_{\alpha+\beta}$.

Assume now that $X_{2\alpha+3\beta}$ acts trivially on $V$. Then by assumption $X_{\alpha+3\beta}$
must act non-trivially, and moreover $V$ is a representation of $G'=G/X_{2\alpha+3\beta}$.
The following relation holds in $G'$:
$$[x_{\alpha+2\beta}(r),x_\beta(s)]=x_{\alpha+3\beta}(3rs).$$
Since $3$ is invertible in $R$ and $X_{\alpha+3\beta}$ is a central subgroup of $G'$ which acts non-trivially, 
arguing as in (a), we conclude that $\dim(V)\geq m(R)$.
\end{proof}

Combining Theorem~\ref{orth_step} and Lemma~\ref{lem:comrel}, we can now estimate orthogonality constants between simple root subgroups in Chevalley groups of rank $2$.

Given a positive real number $m$, define the sequence $s_0(m),s_1(m),\ldots$
by $s_0(m)=0$ and $s_{i}(m)=\sqrt{s_{i-1}(m)+\frac{1}{m}}$ for all $i\geq 1$.

\begin{Corollary}
\label{cor:orth}
Let $R$ be a countable commutative ring and $m=m(R)$ the smallest index of a proper ideal of $R$. Let $\Phi$ be a finite root
system of rank $2$, and let $\{\alpha,\beta\}$ be a base of $\Phi$, with $\alpha$ a long root. Let $G=\dbG^+_{\Phi}(R)$, $X=X_{\alpha}(R)$ and
$Y=X_{\beta}(R)$. The following hold:
\begin{itemize}
\item[(a)] If $\Phi=A_1\times A_1$, then $\orth(X,Y)=0$
\item[(b)] If $\Phi=A_2$, then $\orth(X,Y)\leq s_1(m)=\frac{1}{\sqrt{m}}$
\item[(c)] If $\Phi=B_2$ and $2$ is invertible in $R$, then
 $\orth(X,Y)\leq s_2(m)<\sqrt[4]{\frac{3}{m}}$
\item[(d)] If $\Phi=G_2$ and $2$ and $3$ are invertible in $R$, then
 $\orth(X,Y)\leq s_4(m)< \sqrt[16]{\frac{188}{m}}$
\end{itemize}
\end{Corollary}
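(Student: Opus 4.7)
The plan is to treat each case by repeated application of Theorem~\ref{orth_step}, walking up a chain of central quotients $G=G_0\twoheadrightarrow G_1\twoheadrightarrow\cdots\twoheadrightarrow G_k$ until the images of $X$ and $Y$ commute (at which point Lemma~\ref{lem:orthconst}(a) supplies orthogonality constant $0$). The number of iterations $k$ matches the subscript in the claimed bound $s_k(m)$: $k=0$ for (a), $k=1$ for (b), $k=2$ for (c), and $k=4$ for (d). At each iteration I need (i) the subgroup $H_i$ to lie in the center of $G_{i-1}$ so that Theorem~\ref{orth_step} applies, and (ii) the lower bound $m(H_i,X,Y)\ge m$ in $G_{i-1}$, supplied either by Lemma~\ref{lem:comrel} directly or by rerunning its eigenvector argument in the relevant quotient.

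Case (a) is immediate since $\alpha$ and $\beta$ are orthogonal and thus $X$ and $Y$ commute. For (b), I take $H_1=X_{\alpha+\beta}$, which is central in $\dbG^+_{A_2}(R)$ as the highest-root subgroup; Lemma~\ref{lem:comrel}(a) gives the $m$-bound, and modulo $H_1$ one has $[X',Y']=1$, so $\orth(X,Y)\le\sqrt{1/m}=s_1(m)$. For (c) I first take $H_1=X_{\alpha+2\beta}$ (highest root, so central) with $m$-bound from Lemma~\ref{lem:comrel}(b); this is where the hypothesis $2\in R^\times$ enters. In $G_1=G/H_1$ the only surviving nontrivial commutator among root subgroups is $[x_\alpha(r),x_\beta(s)]=x_{\alpha+\beta}(rs)$, so $X_{\alpha+\beta}$ is central in $G_1$; a second application of Theorem~\ref{orth_step} with $H_2=X_{\alpha+\beta}$ (whose $m$-bound comes from the same eigenvector argument as in Lemma~\ref{lem:comrel}(a), transplanted into $G_1$) collapses $X,Y$ to a commuting pair, giving $\orth(X,Y)\le s_2(m)$.

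For (d) I iterate along the chain
\[
H_1=X_{2\alpha+3\beta},\quad H_2=X_{\alpha+3\beta},\quad H_3=X_{\alpha+2\beta},\quad H_4=X_{\alpha+\beta}.
\]
Centrality of each $H_i$ in the corresponding quotient is a direct check from the $G_2$ commutator relations \eqref{comm rel g2 one}--\eqref{comm rel g2 five}, since each further quotient kills all the higher-weight interactions; for example $X_{\alpha+3\beta}$ becomes central in $G_1=G/X_{2\alpha+3\beta}$ because the only commutator of $X_{\alpha+3\beta}$ with another positive-root subgroup that produces a nonzero element lies in $X_{2\alpha+3\beta}$, which is trivial in $G_1$. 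Lemma~\ref{lem:comrel}(c) already packages the $m$-bound for the first two stages: its two subcases handle exactly the situations where $X_{2\alpha+3\beta}$ and $X_{\alpha+3\beta}$ respectively act non-trivially, the second using $3\in R^\times$. The third and fourth stages reduce to the arguments of Lemma~\ref{lem:comrel}(b) and (a) respectively, applied within the further quotients where the relations $[x_{\alpha+\beta}(r),x_\beta(s)]=x_{\alpha+2\beta}(2rs)$ and $[x_\alpha(r),x_\beta(s)]=x_{\alpha+\beta}(rs)$ still hold. Four applications of Theorem~\ref{orth_step} then yield $\orth(X,Y)\le s_4(m)$.

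The main obstacle is bookkeeping rather than any new idea: at each stage I must verify that the chosen $H_i$ genuinely lies in the center of the previous quotient (not merely normal) and that the critical commutator relation, together with the invertibility of its coefficient, survives into that quotient. The concluding numerical inequalities are then routine: for (c), $m\cdot s_2(m)^4=1+2/\sqrt m+1/m<3$ for all $m\ge 2$; for (d), $m\cdot s_4(m)^{16}$ is maximized over $m\ge 2$ at $m=2$, where it evaluates to approximately $187.84<188$, and tends to $1$ as $m\to\infty$.
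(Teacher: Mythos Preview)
Your proof is correct and follows essentially the same route as the paper's. The paper phrases the reduction recursively via the isomorphisms $\dbG_{A_2}^+(R)/X_{\alpha+\beta}\cong \dbG_{A_1\times A_1}^+(R)$, $\dbG_{B_2}^+(R)/X_{\alpha+2\beta}\cong \dbG_{A_2}^+(R)$, and $\dbG_{G_2}^+(R)/\la X_{\alpha+3\beta},X_{2\alpha+3\beta}\ra\cong \dbG_{B_2}^+(R)$, applying Theorem~\ref{orth_step} once per step (twice for the $G_2\to B_2$ reduction, exactly as you do); you simply unroll this recursion into an explicit chain $H_1,\ldots,H_k$ and supply the numerical verifications for the closed-form bounds, which the paper omits.
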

\begin{proof} In case (a) $X$ and $Y$ commute, so we are done by Lemma~\ref{lem:orthconst}(a).
Each of the subsequent cases follows from the previous one using Theorem~\ref{orth_step}, Lemma~\ref{lem:comrel}
and the following isomorphisms which send simple root groups to simple root groups:
$\dbG_{A_2}^+(R)/X_{\alpha+\beta}\cong \dbG_{A_1\times A_1}^+(R)$,
 $\dbG_{B_2}^+(R)/X_{\alpha+2\beta}\cong \dbG_{A_2}^+(R)$,
 $\dbG_{G_2}^+(R)/\la X_{\alpha+3\beta},X_{2\alpha+3\beta}\ra\cong \dbG_{B_2}^+(R)$
 (for the reduction of $G_2$ to $B_2$ we need to apply Theorem~\ref{orth_step} twice).
\end{proof}

\section{Proof of the main theorem and some variations}

In this section we will establish Theorem~\ref{thm:main} and discuss some of its variations. 
Theorem~\ref{thm:main} will be obtained as an easy
consequence of Corollary~\ref{cor:orth} and the following theorem: 

\begin{Theorem}
\label{thm:prenilpotentgeneration}
Let $A$ be a $2$-spherical $d\times d$ GCM whose indecomposable components have size
at least two (equivalently, the Dynkin diagram of $A$ has no isolated vertices). 
Let $R$ be a commutative ring and $m(R)$ the minimal index of a proper ideal of $R$,
and assume that $m(R)>\max\{-a_{ij}: i\neq j\}$. Let $G=\dbG_A(R)$ and $\Phi=\Phi(A)$ the
associated real root system. Then there exists a subset $\Sigma$
of $\Phi$ with $|\Sigma|<2d$ such that
\begin{itemize}
\item[(a)] $\gamma+\delta\neq 0$ for any $\gamma,\delta\in\Sigma$;
\item[(b)] for any $\gamma,\delta\in \Sigma$ either $X_{\gamma}$ and $X_{\delta}$ commute
or there exist $\alpha_i,\alpha_j\in \Pi$ and $w\in W$ such that $w\gamma,w\delta\in\dbZ\alpha_i+\dbZ\alpha_j$.
\item[(c)] the set $\cup_{\gamma\in \Sigma}X_{\gamma}$ generates $G=\dbG_A(R)$.
\end{itemize}
\end{Theorem}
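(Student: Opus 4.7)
My plan is to construct $\Sigma$ explicitly using spanning trees of the Dynkin diagram, then verify (a), (b), (c), with the main substance concentrated in (c).

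\emph{Construction.} Each indecomposable component $A^{(c)}$ of $A$ has size $d_c\ge 2$, so its Dynkin diagram is connected; fix a spanning tree $T^{(c)}$. For every edge $e=\{i,j\}\in E(T^{(c)})$, the rank-$2$ subsystem $\Phi_e\subset\Phi$ spanned by $\alpha_i,\alpha_j$ is finite (of type $A_2$, $B_2$, or $G_2$ by $2$-sphericity); let $\theta_e$ denote its highest root. Set $\Sigma=\Pi\cup\{-\theta_e:e\in E(T^{(c)}),\;c\}$. Then $|\Sigma|=d+\sum_c(d_c-1)=2d-(\text{number of components})<2d$.

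\emph{Verifications of (a) and (b).} Property (a) is immediate: simple roots are positive while the $-\theta_e$ are negative, and no simple root can equal any $\theta_e$ (which has positive coefficients on both endpoints of $e$). For (b), pairs from different components commute; within a single component, pairs of simple roots and pairs $(\alpha_k,-\theta_e)$ with $k\in e$ trivially lie in some $\Z\alpha_{i'}+\Z\alpha_{j'}$, while for $(\alpha_k,-\theta_e)$ with $k\notin e$ the root subgroups commute, since any putative real root $p\alpha_k-q\theta_e$ with $p,q\ge 1$ would have simple-root coefficients of both signs, which is impossible. For pairs $(-\theta_e,-\theta_f)$ that do not commute, $\{-\theta_e,-\theta_f\}$ is a prenilpotent pair (by (R2) and Proposition~\ref{prenil_basic}(a)), so the reflection subgroup $\la s_{\theta_e},s_{\theta_f}\ra\subset W$ is finite (Proposition~\ref{prenil_basic}(d)); by the classical fact that every finite reflection subgroup of a Coxeter group is conjugate to a standard rank-$2$ parabolic $\la s_{i'},s_{j'}\ra$, there is $w\in W$ with $w(-\theta_e),w(-\theta_f)\in\Phi\cap(\Z\alpha_{i'}+\Z\alpha_{j'})$.

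\emph{Verification of (c)} rests on a rank-$2$ generation lemma: for each edge $e=\{i,j\}$, $\la X_{\alpha_i}(R),X_{\alpha_j}(R),X_{-\theta_e}(R)\ra=\dbG_{\Phi_e}(R)$. One proves this by direct manipulation of the Steinberg relations of $\Phi_e$ (equations (\ref{comm rel b2 one})--(\ref{comm rel g2 five})): iterated commutators among $X_{\alpha_i}(R)$ and $X_{\alpha_j}(R)$ generate all positive root subgroups of $\Phi_e$ (the hypothesis $m(R)>M$ is used to handle the integer coefficients appearing in the $B_2$ and $G_2$ relations), and once $X_{\theta_e}(R)$ is available, the element $\sgal_{\theta_e}\in\la X_{\theta_e},X_{-\theta_e}\ra$ lies in our subgroup and its conjugation action produces all negative root subgroups of $\Phi_e$. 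Applying this lemma to each edge of each spanning tree---every vertex is incident to at least one edge of its tree---we obtain $X_{-\alpha_i}(R)\in\la X_\gamma:\gamma\in\Sigma\ra$ for every $i$. Thus $X_{\pm\alpha_i}$ are all present, the Weyl-type elements $\sgal_i\in\la X_{\pm\alpha_i}\ra$ generate (a lift of) $W$, and conjugating the simple root subgroups by these $\sgal_i$ produces $X_\beta(R)$ for every real root $\beta\in\Phi$; since $\dbG_A(R)$ is generated by the real root subgroups, (c) follows.

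\emph{Main obstacle.} The rank-$2$ generation lemma, particularly in the $B_2$ and $G_2$ cases, is where most of the labor lies: the Steinberg relations carry integer coefficients $2$ and $3$ that must be tracked carefully under the ring $R$, and the sequence of commutators and Weyl conjugations needed to reach every root subgroup of $\Phi_e$ must be arranged with some care. The Coxeter-theoretic fact invoked in (b)---that finite reflection subgroups of a Coxeter group are conjugate to standard rank-$2$ parabolics---is classical but should be cited carefully for the (possibly infinite) Kac-Moody Weyl group.
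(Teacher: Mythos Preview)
Your construction of $\Sigma$ via spanning trees is natural, but it does \emph{not} satisfy condition (b) in general, and the argument you give for (b) contains a genuine error.

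The problematic step is your treatment of pairs $(-\theta_e,-\theta_f)$. You write that if $X_{-\theta_e}$ and $X_{-\theta_f}$ do not commute, then $\{-\theta_e,-\theta_f\}$ is prenilpotent ``by (R2) and Proposition~\ref{prenil_basic}(a)''. This inference is backwards: relation (R2) is imposed \emph{only} for prenilpotent pairs, so a non-prenilpotent pair carries no commutation relation at all and the corresponding root subgroups certainly do not commute. Concretely, take $A$ of type $\widetilde A_3$ (the simply-laced $4$-cycle) and the spanning tree with edges $\{1,2\},\{2,3\},\{3,4\}$. Then $\theta_{\{1,2\}}=\alpha_1+\alpha_2$ and $\theta_{\{3,4\}}=\alpha_3+\alpha_4$, and one computes
\[
\la(\alpha_1+\alpha_2)^\vee,\alpha_3+\alpha_4\ra=a_{13}+a_{14}+a_{23}+a_{24}=-2,
\]
and symmetrically for the other pairing, so the product is $4>3$. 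By Proposition~\ref{prenil_basic}(d) the pair $\{-\theta_{\{1,2\}},-\theta_{\{3,4\}}\}$ is \emph{not} prenilpotent. Hence no $w\in W$ can carry both roots into a finite rank-$2$ subsystem $\dbZ\alpha_i+\dbZ\alpha_j$ (any pair of roots in such a subsystem is prenilpotent), and under the realization $\dbG_{\widetilde A_3}(R)\to EL_4(R[t,t^{-1}])$ of Example~\ref{exKM2} these two root subgroups land in $E_{31}(R)$ and $E_{13}(Rt^{-1})$, which do not commute. Thus your $\Sigma$ fails (b). (The ``classical fact'' you invoke is also not available: finite reflection subgroups of an infinite Coxeter group need not be conjugate to standard parabolics.)

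The paper's construction avoids this by choosing a maximal independent set $\Pi_1\subset\Pi$, setting $w_0=\prod_{\alpha_i\in\Pi_1}s_i$, and taking $\Sigma=\Pi\cup w_0(-\Pi_2)$ where $\Pi_2=\Pi\setminus\Pi_1$. The key feature is that the ``extra'' roots are all obtained from (negatives of) simple roots by a \emph{single} Weyl element $w_0$; this makes (b) for pairs of extra roots immediate (apply $w_0^{-1}$), while the mixed case is handled by a short combinatorial analysis showing that either the relevant linear combinations have coefficients of both signs (forcing commutation) or only one vertex of $\Pi_1$ is involved. Your rank-$2$ generation idea for (c) is close in spirit to the paper's base case, but the global set $\Sigma$ needs to be chosen as above (or in some other way guaranteeing simultaneous $W$-conjugacy of the non-simple members) for (b) to go through.
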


We will first prove Theorem~\ref{thm:main} assuming Theorem~\ref{thm:prenilpotentgeneration} and then prove Theorem~\ref{thm:prenilpotentgeneration}.

\begin{proof}[Proof of Theorem~\ref{thm:main}]
We will prove that $G$ has property $(T)$ by applying Theorem~\ref{propT_criterion} to the collection of subgroups $\{H_{\gamma}\}_{\gamma\in \Sigma}$, where $\Sigma$ satisfies the conclusion of Theorem~\ref{thm:prenilpotentgeneration}. 

First we show that the pair $(G,X_{\alpha})$ has relative property $(T)$ for every $\alpha\in\Sigma$.
By relations (R3) in the definition of $\dbG_A(R)$, replacing $X_{\alpha}$ by a conjugate, we can assume that 
$\alpha=\alpha_i$ is a simple root. Since $A$ is indecomposable, there
exists $j\neq i$ such that $a_{ij}\neq 0$. Let $K=\la X_{\pm\alpha_i}, X_{\pm\alpha_j}\ra\subseteq G$.
Let $\Phi_{i,j}=\Phi(A_{\{i,j\}})$ (where $A_{\{i,j\}}=\begin{pmatrix}2& a_{ij}\\ a_{ji}& 2\end{pmatrix}$). 
Since $A$ is $2$-spherical and $a_{ij}\neq 0$, $\Phi_{i,j}$ is a root system of type $A_2,B_2$ or $G_2$, and
it is clear from the defining relations that $K$ is a quotient of the Steinberg group $St_{\Phi_{i,j}}(R)$. The group $St_{\Phi_{i,j}}(R)$
has property $(T)$ by \cite{EJK}, whence $(G,X_{\alpha})$ has relative property $(T)$ by 
Lemma~\ref{relativeT_trivial}.

It remains to check the required upper bounds on orthogonality constants. Take any
$\gamma,\delta\in\Sigma$. By Theorem~\ref{thm:prenilpotentgeneration} either
$X_{\gamma}$ and $X_{\delta}$ commute, in which case $\orth(X_{\gamma},X_{\delta})=0$,
or there exist $\alpha_i,\alpha_j\in\Pi$ and $w\in W$ such that $w\gamma,w\delta\in\dbZ\alpha_i+\dbZ\alpha_j$.
In the latter case, after conjugation in $G$, we can assume that $w=1$. 
Since $\gamma+\delta\neq 0$, an easy case-by-case verification using commutator relations in Chevalley groups shows that there exists a finite root system $\Psi$ of rank $2$ and an epimorphism $\dbG^+_{\Psi}(R)\to \la X_{\alpha_i},X_{\alpha_j}\ra$ which sends simple root subgroups of $\dbG^+_{\Psi}(R)$ to $X_{\alpha_i}$ and $X_{\alpha_j}$; moreover, $\Psi\neq B_2$ if $a_{ij}a_{ji}\leq 1$ and $\Psi\neq G_2$
if $a_{ij}a_{ji}\leq 2$. Applying Corollary~\ref{cor:orth} (and recalling the assumption on $R$ in the Theorem~\ref{thm:main}), we deduce that $\orth(X_{\alpha_i},X_{\alpha_j})<\frac{1}{2d-2}\leq \frac{1}{|\Sigma|-1}$.
\end{proof}

\begin{proof}[Proof of Theorem~\ref{thm:prenilpotentgeneration}]

Recall that $\Pi=\{\alpha_1,\ldots,\alpha_d\}$ denotes the set of simple roots and $Dyn(A)$ is the Dynkin diagram of $A$.
Let $\Pi_1$ be a maximal subset of $\Pi$ with the property that no two roots in $\Pi_1$ are connected to each other in $Dyn(A)$.
Let $\Pi_2=\Pi\setminus\Pi_1$, $k=|\Pi_1|$ and $l=|\Pi_2|=d-k$. Without loss of generality we can assume that $\Pi_1=\{\alpha_1,\ldots,\alpha_k\}$.
For brevity set $\beta_i=\alpha_{k+i}$ for $1\leq i\leq l$, so that $\Pi_2=\{\beta_1,\ldots,\beta_l\}$.

Let $w_0=s_{\alpha_1}\ldots s_{\alpha_k}$, set $\gamma_i=w_0(-\beta_i)$ for $1\leq i\leq l$, and let $\Sigma=\Pi\sqcup\{\gamma_i\}_{i=1}^l$. 
Clearly $|\Sigma|<2d$ and $\Sigma$ has property (a). We will now prove that $\Sigma$ also satisfies (b) and (c).
\vskip .2cm

(b) Let $\gamma,\delta\in\Sigma$. If $\gamma,\delta\in \Pi$ or if $\gamma,\delta\in\{\gamma_i\}$,
there is nothing to prove. Thus, after possibly swapping $\gamma$ and $\delta$, we can assume that
$\delta=\alpha_i$ for some $1\leq i\leq d$ and $\gamma=\gamma_j=w_0(-\beta_j)$ for some $1\leq j\leq l$

{\it Case 1:} $\alpha_i\neq\beta_j$ (that is, $i\neq j+k$). In this case any element of $\dbN\gamma+\dbN\delta$ clearly has both positive and negative coefficients (when expressed as a linear combination of simple roots), hence cannot be a root. Therefore, the pair $\{\gamma,\delta\}$
is prenilpotent by Proposition~\ref{prenil_basic}(a), and $X_{\gamma}$ and $X_{\delta}$ commute by relations (R2).

{\it Case 2:} $\alpha_i=\beta_j$. We have $\gamma_j=-\beta_j-\sum_{t=1}^k n_t \alpha_t$ where each $n_t\geq 0$; moreover $n_t>0$ if and only if $\alpha_t$ is connected to $\beta_j$ in $Dyn(A)$. Thus, $\la\gamma,\delta^{\vee}\ra=\la \gamma_j,\beta_j^{\vee}\ra\geq -2+m$ where $m$ is the number of roots in $\Pi_1$ connected to $\beta_j$ (by assumption $m\geq 1$). If $m=1$, then
$\gamma,\delta\in \dbZ\alpha_t\oplus \dbZ\beta_j$ where $\alpha_t$ is the unique root in $\Pi_1$ which is connected to $\beta_j$.
Assume now that $m\geq 2$. Then $\la\gamma,\delta^{\vee}\ra\geq 0$, whence by Proposition~\ref{prenil_basic}(c), $\{\gamma,\delta\}$ is prenilpotent
and the intersection $(\dbN\gamma+\dbN\delta)\cap \Phi$ is either empty or equals $\{\gamma+\delta\}$. In the former case we are done as in case 1,
and the latter case is actually impossible. Indeed $\gamma+\delta=-\sum_{t=1}^k n_t \alpha_t$ with at least two coefficients $n_t$ positive, whence
$\gamma+\delta$ is not a root since the roots $\{\alpha_t\}_{t=1}^k$ are pairwise disconnected.
\vskip .2cm

(c) Let $H$ be the subgroup generated by $\cup_{\gamma\in \Sigma}X_{\gamma}$. We need prove that $H=G$, for which it is sufficient to check that $H$ contains $X_{-\omega}$ for every simple
root $\omega$. We will argue by induction on $d$.

{\it Base case $d=2$}. In this case 
$\Phi=A_2,B_2$ or $G_2$ and, following our earlier convention, we denote the elements of
$\Pi$ by $\alpha$ and $\beta$, with $\alpha$ being a long root.

Note that if $\{\gamma,\delta\}$ is any base of $\Phi$, then $\{\gamma,\delta\}$ (considered as an unordered pair) 
is conjugate to $\Pi$; hence by Theorem~\ref{thm:Al1} and our assumption on $R$, for any root 
$\eps\in \dbN\gamma+\dbN\delta$ the root subgroup $X_{\eps}$
lies in $\la X_{\gamma},X_{\delta}\ra$.  
\vskip .1cm

{\it Subcase 1:} $\Phi=A_2$, $\alpha_1=\alpha$, $\beta_1=\beta$. In this case $\gamma_1=s_{\alpha}(-\beta)=
-(\alpha+\beta)$. Since $\{\gamma_1,\beta\}$ is a base of $\Phi$
and $-\alpha=\gamma_1+\beta$, we have $X_{-\alpha}\subseteq H$.

Once we know that $H$ contains $X_{\alpha}$ and $X_{-\alpha}$,
we conclude that $\widetilde s_{\alpha}\in H$, whence
$X_{-\beta}=
X_{s_{\alpha}(\gamma_1)}={\widetilde s_{\alpha}}^{-1}X_{\gamma_1}\widetilde s_{\alpha}\subseteq H$,
and we are done.

{\it Subcase 2:} $\Phi=B_2$, $\alpha_1=\alpha$, $\beta_1=\beta$. In this case $\gamma_1= -(\alpha+\beta)$. 

Since $\{\gamma_1,\alpha\}$ is a base and
$-(\alpha+2\beta)=2\gamma_1+\alpha$, the subgroup $H$ contains $X_{-(\alpha+2\beta)}$. Since $\{-(\alpha+2\beta),\beta\}$
is a base and $-\alpha=-(\alpha+2\beta)+2\beta$,
we conclude that $H$ contains $\widetilde s_{\alpha}$, and we can finish
the proof as in subcase 1.

{\it Subcase 3:} $\Phi=B_2$, $\alpha_1=\beta$, $\beta_1=\alpha$. In this case $\gamma_1=s_{\beta}(-\alpha)= -(\alpha+2\beta)$. 

Since $\{-(\alpha+2\beta),\beta\}$ is a base and $-(\alpha+\beta)=-(\alpha+2\beta)+\beta$,
we have $X_{-(\alpha+\beta)}\subseteq H$, hence we are done by subcase 2.

{\it Subcase 4:} $\Phi=G_2$, $\alpha_1=\alpha$, $\beta_1=\beta$. In this case $\gamma_1=-(\alpha+\beta)$, and the argument is analogous to subcase 2 with $-(\alpha+2\beta)$ replaced by $-(\alpha+3\beta)$.

{\it Subcase 5:} $\Phi=G_2$, $\alpha_1=\beta$, $\beta_1=\alpha$. In this case $\gamma_1=-(\alpha+3\beta)$, and the argument is analogous to subcase 3, again with $-(\alpha+2\beta)$ replaced by $-(\alpha+3\beta)$.

\vskip .2cm
We proceed with the induction step. Let $d>2$, and assume (c) has
been established for all matrices of size less than $d$. If
$Dyn(A)$ is disconnected, the induction step is trivial, so we can assume that $Dyn(A)$ is connected.

{\it Case 1:} Each simple root in $\Pi_1$ is connected to a root
of $\Pi_2$ different from $\beta_1$. In this case the Dynkin subdiagram with vertex set $\Pi\setminus\{\beta_1\}$ has no isolated vertices and $\Pi_1$ is a maximal subset of pairwise disconnected vertices
in $\Pi\setminus\{\beta_1\}$. Hence, by induction hypothesis $H$ contains
$X_{-\gamma}$ for every $\gamma\in\Pi$ except possibly $\gamma=\beta_1$. In particular, $X_{-\alpha_i}\subseteq H$
for all $1\leq i\leq k$. Since $-\beta_1=s_{\alpha_1}\ldots s_{\alpha_k}(\gamma_1)$ and $X_{\gamma_1}\subseteq H$,
arguing as in subcase 1 of the base step,
we conclude that $X_{-\beta_1}\subseteq H$, and we are done.

{\it Case 2:} There exists a simple root in $\Pi_1$ which is only
connected to $\beta_1$. Without loss of generality assume that
$\alpha_1$ is the root with this property. Since we assume that $Dyn(A)$ is connected,
$\beta_1$ must be connected to a simple root other than $\alpha_1$.
Hence the Dynkin subdiagram with vertex set $\Pi\setminus\{\alpha_1\}$
has no isolated vertices, and it is clear that
$\Pi_1\setminus\{\alpha_1\}$ is a maximal subset of pairwise disconnected vertices in $\Pi\setminus\{\alpha_1\}$.

Since $\alpha_1$ is only connected to $\beta_1$, we have
$s_{\alpha_1}\ldots s_{\alpha_k}(-\beta_j)=s_{\alpha_2}\ldots s_{\alpha_k}(-\beta_j)$ for all $j\neq 1$. Hence,
by induction hypothesis $H$ contains
$X_{-\gamma}$ for every $\gamma\in\Pi$ except possibly $\gamma=\beta_1$ and $\gamma=\alpha_1$.

In particular, $H$ contains $X_{\pm \alpha_i}$ for all $2\leq i\leq k$. Since $X_{\gamma_1}\subseteq H$ and $s_{\alpha_2}\ldots s_{\alpha_k}(\gamma_1)=s_{\alpha_1}(-\beta_1)$, it follows that
that $H$ contains $X_{s_{\alpha_1}(-\beta_1)}$. Applying the result
in the base case to the Dynkin subdiagram with vertex set
$\{\alpha_1,\beta_1\}$, we conclude that $H$ contains $X_{-\alpha_1}$
and $X_{-\beta_1}$. The proof is complete.
\end{proof}

\subsection{Some variations}

As we already saw in Example~\ref{ex1}, if $R$ is a finite commutative ring
and $A$ is a 2-spherical GCM with simply-laced Dynkin diagram, then the positive subgroup $\dbG_A^+(R)$ of the Kac-Moody 
group $\dbG_A(R)$ has property $(T)$ whenever $R$ has no proper ideals of small index. The proof of Theorem~\ref{thm:main}
shows that the result remains true without the assumption that $A$ is simply-laced.
As already mentioned in the introduction, if $R$ is infinite, the group $\dbG_A^+(R)$ has infinite abelianization and thus 
cannot have property $(T)$; however, Theorem~\ref{thm:pseudoparabolic} below shows that one can still construct many subgroups with property $(T)$ which lie between $\dbG_A^+(R)$ and $\dbG_A(R)$.

Let $A$ be a GCM of size $d$ and $I$ a subset of $\{1,2,\ldots,d\}$. Given a commutative ring $R$, define $P_{A,I}(R)$ to the
subgroup of $\dbG_A(R)$ generated by all simple root subgroups $X_{\alpha_i}(R), 1\leq i\leq d$ as well as the negative root subgroups $X_{-\alpha_i}(R), i\in I$ (thus, $\dbG_A(R)=P_{A,I}(R)$ for $I=\{1,2,\ldots,d\}$ and $\dbG_A^+(R)=P_{A,\emptyset}(R)$
provided the hypotheses of Theorem~\ref{thm:Al1} hold). We will call
the groups $P_{A,I}(R)$ the {\it pseudo-parabolic} subgroups of $\dbG_A(R)$ (the parabolic subgroups, which we will not consider, 
are defined in the same way except that they must also contain the standard torus).

\begin{Theorem}
\label{thm:pseudoparabolic}
Let $A=(a_{ij})$ be an indecomposable 2-spherical GCM of size $d\geq 2$, assume that $Dyn(A)$ has no triple edges, and define $n(A)$ as in Theorem~\ref{thm:main}. Let $I$ be a subset of 
$\{1,2,\ldots,d\}$ such that for every $1\leq i\leq d$ there exists $j\in I$ such that $j\neq i$ and $a_{ij}\neq 0$. If
$R$ is any finitely generated commutative ring which does not have proper ideals of index less than $n(A)$,
then the pseudo-parabolic subgroup $P_{A,I}(R)$ has property $(T)$.
\end{Theorem}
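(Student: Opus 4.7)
The plan is to apply the almost orthogonality criterion Theorem~\ref{propT_criterion} to a collection of root subgroups of $P_{A,I}(R)$, closely following the proof of Theorem~\ref{thm:main}. When $I=\{1,\ldots,d\}$ we have $P_{A,I}(R)=\dbG_A(R)$ and the statement is precisely Theorem~\ref{thm:main}, so assume $I\subsetneq\{1,\ldots,d\}$.

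First, I would construct an analog $\Sigma'\subseteq\Phi$ of the set $\Sigma$ produced in Theorem~\ref{thm:prenilpotentgeneration}, satisfying $|\Sigma'|\leq 2d-1$ together with $X_\gamma\subseteq P_{A,I}(R)$ for every $\gamma\in\Sigma'$, no two elements of $\Sigma'$ summing to zero, $\bigcup_{\gamma\in\Sigma'}X_\gamma$ generating $P_{A,I}(R)$, and the same rank-$2$ structural condition on pairs as in Theorem~\ref{thm:prenilpotentgeneration}(b). These properties let us conclude $\orth(X_\gamma,X_\delta)<1/(|\Sigma'|-1)$ from Corollary~\ref{cor:orth} under the hypothesis $m(R)>n(A)$, exactly as in the proof of Theorem~\ref{thm:main}. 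The construction itself mimics $\Sigma=\Pi\cup\{\gamma_i\}$ from Theorem~\ref{thm:prenilpotentgeneration}: choose a maximal independent subset $\Pi_1\subseteq\Pi$ and form $\gamma_i=w_0(-\beta_i)$ with $w_0=\prod_{\alpha\in\Pi_1}s_\alpha$ and $\beta_i\in\Pi_2:=\Pi\setminus\Pi_1$. The essential new constraint in the pseudo-parabolic setting is that $\widetilde{w}_0$ must belong to $P_{A,I}(R)$, forcing $\Pi_1\subseteq I$, together with $X_{-\beta_i}\subseteq P_{A,I}(R)$, forcing $\beta_i\in I$. The total-dominating hypothesis on $I$ is designed to enable such a choice of $\Pi_1\subseteq I$ covering enough of $\Pi_2$ for the generation argument of Theorem~\ref{thm:prenilpotentgeneration}(c) to go through; in edge cases where no maximal independent set of $Dyn(A)$ lies inside $I$ (e.g.\ cycle-type diagrams like $\widetilde{A}_n$), a single carefully chosen Weyl-translate of a negative simple root in place of the full family $\{\gamma_i\}$ already suffices.

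Next, I would verify relative property $(T)$ for each pair $(P_{A,I}(R),X_\gamma)$ with $\gamma\in\Sigma'$. If $\gamma$ is a $W_I$-conjugate of $\pm\alpha_i$ with $i\in I$, then the rank-$2$ Steinberg subgroup $\langle X_{\pm\alpha_i},X_{\pm\alpha_j}\rangle$, for some $j\in I$ adjacent to $i$ (existing by the condition on $I$), lies in $P_{A,I}(R)$ and has property $(T)$ by \cite{EJK}, so Lemma~\ref{relativeT_trivial} applies exactly as in the proof of Theorem~\ref{thm:main}. If instead $\gamma=\alpha_i$ with $i\notin I$, then $X_{-\alpha_i}$ is unavailable and the full rank-$2$ Steinberg subgroup is not contained in $P_{A,I}(R)$; in that case I would use the parabolic-type subgroup $K=\langle X_{\alpha_i},X_{\pm\alpha_j}\rangle\subseteq P_{A,I}(R)$ for some $j\in I$ adjacent to $i$. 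Because $Dyn(A)$ has no triple edges, the relevant rank-$2$ subsystem is of type $A_2$ or $B_2$, and by the commutation relations of \S 2 the subgroup $K$ is a semidirect product $N\rtimes\SL_2(R)$ with $\SL_2(R)=\langle X_{\pm\alpha_j}\rangle$ and $N$ generated by the positive root subgroups $X_\mu$ for $\mu\in(\dbN_{>0}\alpha_i+\dbN\alpha_j)\cap\Phi$; so $N\cong R^2$ in the $A_2$ case and $N$ is a three-dimensional two-step nilpotent group in the $B_2$ case. Relative $(T)$ for $(K,X_{\alpha_i})$, hence for $(P_{A,I}(R),X_{\alpha_i})$ by Lemma~\ref{relativeT_trivial}, would then follow from Shalom/Kassabov-type relative property $(T)$ results for $\SL_2(R)\ltimes R^n$ over finitely generated commutative rings with $m(R)$ sufficiently large.

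The main obstacle is this last step. Establishing relative $(T)$ for the parabolic $K$ in the case $i\notin I$ lies outside the machinery used in Theorem~\ref{thm:main} and requires semidirect-product relative $(T)$ theorems from outside Kac-Moody theory; this is almost certainly the reason for excluding triple edges, since in a $G_2$-type situation the analogous unipotent radical $N$ would be a more complicated nilpotent group and the necessary relative $(T)$ statement would either fail or require substantially stronger hypotheses on $R$. A secondary, purely combinatorial difficulty is arranging $\Sigma'$ so as to generate $P_{A,I}(R)$ while keeping $|\Sigma'|\leq 2d-1$ and avoiding zero-sum pairs; the condition on $I$ is precisely what makes this possible, but some case analysis based on how $I^c$ sits inside $Dyn(A)$ is needed.
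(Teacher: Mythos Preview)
Your approach is the paper's approach: construct a $\Sigma'$-type set and apply Theorem~\ref{propT_criterion}, with the new ingredient being semidirect-product relative~$(T)$ for the simple roots outside $I$. Two places where the paper is cleaner than your sketch:

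\textbf{The combinatorics.} Your worry about ``edge cases where no maximal independent set of $Dyn(A)$ lies inside $I$'' disappears if you take $\Pi_1$ to be a maximal independent subset \emph{of $\{\alpha_i:i\in I\}$} rather than of all of $\Pi$. Set $I_1=\{i:\alpha_i\in\Pi_1\}$, $I_2=I\setminus I_1$, $I_3=\{1,\ldots,d\}\setminus I$, $w=\prod_{i\in I_1}s_{\alpha_i}$, $\Lambda=w(-\{\alpha_j:j\in I_2\})$, and $\Sigma=\Pi\cup\Lambda$. The hypothesis on $I$ forces $Dyn_I(A)$ to have no isolated vertices, so Theorem~\ref{thm:prenilpotentgeneration} applied to the submatrix $A_I$ gives $\langle X_\gamma:\gamma\in\Pi_1\cup\Pi_2\cup\Lambda\rangle=\langle X_{\pm\alpha_i}:i\in I\rangle$; adjoining $X_{\alpha_i}$ for $i\in I_3$ then yields all of $P_{A,I}(R)$, with no case analysis. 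For condition~(b) with $\gamma\in\Pi_3$ and $\delta\in\Lambda$, no simple root occurs in both expansions and the signs are opposite, so $X_\gamma$ and $X_\delta$ commute as in Case~1 of Theorem~\ref{thm:prenilpotentgeneration}(b).

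\textbf{Relative $(T)$ for $i\notin I$.} Your description of $N$ in the $B_2$ case as ``three-dimensional two-step nilpotent'' covers only the subcase where $\alpha_i$ is short; there relative~$(T)$ is \cite[Corollary~7.11]{EJK}. When $\alpha_i$ is long, $N\cong S^2(R^2)$ is abelian and one invokes \cite[Theorem~3.3]{Ne} together with \cite[Corollary~2]{CT}. In the $A_2$ case $N\cong R^2$ and one uses \cite[Appendix~A]{EJK}. The paper also shortcuts the relative~$(T)$ check for $\gamma\in\Pi_1\cup\Pi_2\cup\Lambda$ by noting that the subgroup these root subgroups generate already has full property~$(T)$, via Theorem~\ref{thm:main} applied to $A_I$.
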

\begin{Remark} $\empty$

1. Theorem~\ref{thm:pseudoparabolic} remains valid even if $Dyn(A)$ has a triple edge. This generalization, which requires new results on relative property $(T)$, will be proved in the appendix to this paper by Zezhou Zhang.

2. If $R$ surjects onto $\dbZ$, it is easy to show that the condition on $I$ in the statement of Theorem~\ref{thm:pseudoparabolic}
is necessary for $P_{A,I}(R)$ to have property $(T)$. We do not know if there are any infinite rings for which 
$P_{A,I}(R)$ has property $(T)$ without the above condition on $I$.

\end{Remark} 
\begin{proof} The proof of Theorem~\ref{thm:pseudoparabolic} is essentially the same as that of Theorem~\ref{thm:main}, requiring just small modifications. Let $I_1$ be a maximal subset of $I$ with the property that any two simple roots $\alpha_i,\alpha_j$ with $i,j\in I_1$, are not connected. Let $I_2=I\setminus I_1$, $I_3=\{1,\ldots,d\}\setminus I$ and $\Pi_k=\{\alpha_i: i\in I_k\}$ for $k=1,2,3$.

Let $w=\prod_{i\in I_1}s_{\alpha_i}$, let $\Lambda=w(-\Pi_2)=\{-w(\alpha_j): j\in I_2\}$ and $\Sigma=\Pi\cup \Lambda$.
We claim that $\Sigma$ satisfies conditions (a) and (b) of Theorem~\ref{thm:prenilpotentgeneration} and
the set $\{X_{\gamma}: \gamma\in \Sigma\}$ generates $P_{A,I}(R)$. Condition (a) is obvious. Next we check (b) --
it is clear if $\gamma,\delta\in \Pi$ or $\gamma,\delta\in \Lambda$ and holds by the proof of Theorem~\ref{thm:prenilpotentgeneration} if $\gamma\in \Pi_1\cup \Pi_2$ and $\delta\in \Lambda$ (or vice versa). If  $\gamma\in \Pi_3$ and 
$\delta\in \Lambda$ (or vice versa), then no simple root appears in the expansion of both $\gamma$ and $\delta$;
since $\gamma$ and $\delta$ have opposite signs, $X_{\gamma}$ and $X_{\delta}$ commute 
by the argument from Case 1 of the proof of Theorem~\ref{thm:prenilpotentgeneration}(b).
Finally, applying the proof of Theorem~\ref{thm:prenilpotentgeneration} to the Dynkin subdiagram $Dyn_I(A)$, we conclude that
the subgroup $\la X_{\gamma}: \gamma\in \Pi_1\cup \Pi_2\cup \Lambda\ra$ is equal to $\la X_{\gamma}: \gamma\in \pm(\Pi_1\cup \Pi_2)\ra$,
whence $\{X_{\gamma}: \gamma\in \Sigma\}$ must generate $P_{A,I}(R)$.

To finish the proof it remains to show that the pair $(P_{A,I}(R), X_{\gamma})$ has relative property $(T)$ for every $\gamma\in \Sigma$
(once this is done, we simply repeat the argument in the proof of Theorem~\ref{thm:main}), and this is where our hypothesis on $I$ comes into play. First of all, by assumption the subdiagram $Dyn_I(A)$ has no isolated vertices, hence we can apply 
Theorem~\ref{thm:main} to the submatrix $A_I$ to conclude that the group $\la X_{\gamma}: \gamma\in \Pi_1\cup \Pi_2\cup \Lambda\ra$ has property $(T)$. In particular, this implies that $(P_{A,I}(R), X_{\gamma})$ has relative $(T)$ for $\gamma\in \Pi_1\cup \Pi_2\cup \Lambda$. 

If $\gamma\in \Pi_3$, we choose $\delta\in \Pi_1\cup \Pi_2$ which is connected to $\gamma$ (such $\delta$ exists
by assumption on $I$). Let $\Psi=(\dbZ\gamma\oplus \dbZ\delta)\cap \Phi$. By assumption $\Psi$ is a root subsystem of type $A_2$ or $B_2$. Consider the subgroup $H=\la X_{\gamma}, X_{\delta},X_{-\delta}\ra$. It is easy to see that $H=\la X_{\delta},X_{-\delta}\ra N$ where $N=\la X_{\lam} : \lam\in\Psi^+\setminus\{\delta\}\ra$ and $N$ is normal in $H$.
Since $H\subseteq P_{A,I}(R)$ and $X_{\gamma}\subseteq N$, to finish the proof it suffices to show that $(H,N)$ has relative $(T)$. We consider three cases.

{\it Case 1:} $\Psi$ is of type $A_2$. Then there exists an epimorphism $St_2(R)\ltimes R^2\to H$ which sends $St_2(R)$ to $\la X_{\delta},X_{-\delta}\ra$
and $R^2$ to $N=X_{\gamma}X_{\gamma+\delta}$. Since the pair $(St_2(R)\ltimes R^2, R^2)$ has relative property $(T)$ by \cite[Appendix~A]{EJK}, it follows that $(H,X_{\gamma}X_{\gamma+\delta})$ has relative property $(T)$.

{\it Case 2:} $\Psi$ is of type $B_2$ and $\gamma$ is a short root. In this case 
$(H,N)$ has relative property $(T)$ by \cite[Corollary~7.11]{EJK}.

{\it Case 3:} $\Psi$ is of type $B_2$ and $\gamma$ is a long root. In this case 
$N\cong (S^2(R^2),+)$, where $S^2$ denotes the second symmetric power,
and the action of $\la X_{\delta},X_{-\delta}\ra$ on $N$ factors through the corresponding action of $EL_2(R)$ on $S^2(R^2)$.
The pair $(EL_2(R)\ltimes S^2(R^2),S^2(R^2))$ has relative property $(T)$ by \cite[Theorem~3.3]{Ne}, and since $N$ is abelian, 
\cite[Corollary~2]{CT} implies that $(H,N)$ has relative property $(T)$ as well.
\end{proof} 

If $R$ is a ring which is not finitely generated, Kac-Moody groups over $R$ cannot possibly have property $(T)$ as discrete groups;
however, they may still have property $(T)$ when endowed with suitable topology. In particular, the following theorem
was proved by Hartnick and K\"ohl in \cite{HK}:\footnote{The theorem proved in \cite{HK} is slightly more general as it deals with almost split Kac-Moody groups while we only consider split Kac-Moody groups}

\begin{Theorem}
\label{KMreals}
Let $A$ be an indecomposable 2-spherical $d\times d$ GCM, with $d\geq 2$, and $F$ a local field. Then the Kac-Moody group $\dbG_A(F)$ endowed with the Kac-Peterson topology has property $(T)$.
\end{Theorem}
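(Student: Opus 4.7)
The plan is to run the proof of Theorem~\ref{thm:main} verbatim in the topological category, exploiting that the local field $F$ has no proper ideals to drive all the orthogonality constants to zero. Throughout I work with continuous unitary representations of $\dbG_A(F)$, endowed with the Kac-Peterson topology, and of its closed locally compact subgroups. First, Theorem~\ref{thm:prenilpotentgeneration} applies with $R = F$ (its hypothesis $m(F) > \max\{-a_{ij}\}$ is vacuous since $F$ has no proper ideals of small quotient), yielding a subset $\Sigma \subset \Phi$ with $|\Sigma| < 2d$ such that $\{X_\gamma(F)\}_{\gamma \in \Sigma}$ topologically generates $\dbG_A(F)$, no two of its elements sum to zero, and every noncommuting pair lies, after a Weyl translate, inside a rank-two root subsystem.

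Second, I verify relative property (T) for each pair $(\dbG_A(F), X_\gamma(F))$. After a Weyl conjugation we may assume $\gamma = \alpha_i$; by indecomposability of $A$ we pick $j \neq i$ with $a_{ij} \neq 0$, and then $K_{ij} := \la X_{\pm \alpha_i}(F), X_{\pm \alpha_j}(F)\ra$ is identified with the simply-connected split Chevalley group over $F$ of type $\Phi_{ij} \in \{A_2, B_2, G_2\}$. Such a group is a simple algebraic $F$-group of split rank two, hence has property (T) by Kazhdan's classical theorem. The topological analogue of Lemma~\ref{relativeT_trivial} then yields the required relative (T).

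Third, I establish $\orth(X_\gamma(F), X_\delta(F)) = 0$ for all distinct $\gamma, \delta \in \Sigma$ (orthogonality computed via continuous unitary representations). Commuting pairs are handled by the topological form of Lemma~\ref{lem:orthconst}(a); noncommuting pairs reduce, by Theorem~\ref{thm:prenilpotentgeneration}(b), after a Weyl translate to a rank-two subsystem, and I invoke continuous analogues of Theorem~\ref{orth_step} and Lemma~\ref{lem:comrel}. The decisive point is that in Theorem~\ref{orth_step} the error term $1/\dim V$ vanishes for infinite-dimensional continuous irreducibles; since $F$ has no nonzero proper ideals, the character argument of Lemma~\ref{lem:comrel} produces uncountably many distinct continuous eigencharacters on an opposite root subgroup, forcing $\dim V = \infty$ at every stage of the chain. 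Thus the bounds of Corollary~\ref{cor:orth} collapse to zero. The topological version of Theorem~\ref{propT_criterion} applied to $\{X_\gamma(F)\}_{\gamma \in \Sigma}$ now delivers property (T) for $\dbG_A(F)$.

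The main obstacle is to verify that the almost-orthogonality machinery — notably the Hilbert-Schmidt computation in Theorem~\ref{orth_step} and the reduction to irreducibles in Theorem~\ref{propT_criterion} — transfers cleanly from countable discrete groups to the locally compact Kac-Peterson setting, with direct integrals replacing direct sums and the standard Schur lemma for locally compact groups replacing the discrete version. A secondary technical point is that in residue characteristics $2$ and $3$ some coefficients in the commutator relations used in Lemma~\ref{lem:comrel} degenerate; one must confirm that a central root subgroup can still be chosen so that the infinite-dimensionality argument goes through in those cases.
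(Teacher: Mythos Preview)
Your approach is genuinely different from the paper's, and substantially more laborious. The paper (following a suggestion of Caprace) never leaves the discrete world: it uses the elementary observation that a topological group $G$ inherits property~$(T)$ from any dense subgroup $\Gamma$ that has~$(T)$ as a discrete group. One then exhibits, for each local field $F$ and each integer $n$, a dense finitely generated subring $R\subset F$ with no proper ideals of index at most $n$ (for instance $R=\dbZ[\tfrac{1}{n!}]$ when $F=\dbR$, with analogous explicit choices for $\dbC$, finite extensions of $\dbQ_p$, and $k((t))$), observes that the subgroup of $\dbG_A(F)$ generated by the $X_\alpha(R)$ is dense in the Kac--Peterson topology and is a quotient of the discrete group $\dbG_A(R)$, and invokes Theorem~\ref{thm:main} for $\dbG_A(R)$. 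No direct integrals, no topological Schur lemma, no continuous-representation bookkeeping.

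By contrast, your route requires topological versions of Theorem~\ref{propT_criterion}, Theorem~\ref{orth_step}, and Lemma~\ref{lem:comrel}, all of which are stated in the paper only for countable discrete groups (and the reduction to irreducibles in \cite[Claim~10.7]{EJK} underlying Theorem~\ref{orth_step} is formulated for countable groups as well). You correctly flag this transfer as the main obstacle, but it is not carried out, so as written the proposal is a programme rather than a proof. Your secondary worry is also substantive in your framework: for $F$ of characteristic~$2$ with a double edge, or characteristic~$3$ with a triple edge, the structure constants $2$ and $3$ in Lemma~\ref{lem:comrel}(b),(c) vanish identically, the eigencharacter separation collapses, and no alternative central root subgroup with a nonzero coefficient is available in the relevant rank-two commutator relations. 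The paper's dense-subring reduction buys a one-paragraph proof once Theorem~\ref{thm:main} is in hand; your approach, even with the topological machinery supplied, would still need a separate argument in small characteristic.
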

We finish the paper by providing a new proof of this theorem. We refer the reader to \cite{HKM,HK} for the definition of the Kac-Peterson topology.

\begin{proof} The following proof which substantially simplifies our original argument was suggested by Pierre-Emmanuel Caprace.
First, it is clear from the definition of property $(T)$ that if $G$ is a topological group, $\Gamma$ a dense subgroup of $G$ and $\Gamma$ has property $(T)$
when considered as a discrete group, then $G$ has property $(T)$. 

For a subring $R$ of $F$, denote by $\dbG_A(R,F)$ the subgroup of $\dbG_A(F)$ generated
by $\cup_{\alpha\in\Phi(A)}X_{\alpha}(R)$. It is clear from the defining relations that $\dbG_A(R,F)$ is a homomorphic image of $\dbG_A(R)$. 
On the other hand, it is clear from the definition of the Kac-Peterson topology
that if $R$ is a dense subring of $F$, then $\dbG_A(R,F)$ is dense in $\dbG_A(F)$. 

Thus, to prove that $\dbG_A(F)$ with the Kac-Peterson topology has property $(T)$, it suffices to find a dense subring $R$ of $F$ such that $\dbG_A(R)$ has property $(T)$ as a discrete group. In view of Theorem~\ref{thm:main}, it is enough to show that for any $n\in\dbN$
there exists a dense finitely generated subring $R$ of $F$ with no proper ideals of index at most $n$. If $F=\dbR$, we set $R=\dbZ[\frac{1}{n!}]$. If $F=\dbC$,
we set $R=\dbZ[i,\frac{1}{n!}]$. If $F$ is a finite extension of $\dbQ_p$, choose $\alpha\in F$ such that $F=\dbQ_p[\alpha]$
and set $R=\dbZ[\alpha,\frac{1}{p\cdot n!}]$. Finally, if $F=k((t))$, with $k$ a finite field, we set $R=k[t,\frac{1}{t\cdot f_n}]$ where $f_n$ is the product of all irreducible polynomials in $k[t]$ of degree at most $\log_{|k|}(n)$. Clearly, in each case $R$ has the required property. (assuming $n\geq 2$).
\end{proof}

\appendix

\section{}
\centerline{by \sc Zezhou Zhang}
\renewcommand{\thesubsection}{\Alph{subsection}}
\vskip .5cm

In this appendix we develop the necessary tools to generalize, towards the end, Theorem~\ref{thm:pseudoparabolic} of the main text to allow triple edges. As we do not see generality as a top priority, we value conciseness over presenting results in their strongest form.

Most of this appendix will be devoted to proving a new result regarding relative property~($T$).
The reader may note the resemblance between our proof and classical approaches from \cite{Shalom, EJK, Burger}.

\begin{Definition}\rm
	 For any commutative ring $R$, let $V_2(R)$ be the rank 2 free $R$-module equipped with the standard action of $\mathfrak{sl}_2(R)$ (and $\SL_2(R)$). For $m \geq 0$, we define $V_{m+1}(R)$ as the symmetric power $Sym^m(V_2(R))$, which is free of rank $m+1$, and carries a canonical $\SL_2(R)$-module structure. 
\end{Definition}
 Note that in the case $R=\dbR$, $V_n(\rr)$ is the unique irreducible $\sltwo(\rr)$-module of dimension $n$. We now state the main result.

\begin{Theorem}\label{one}
	Let $R$ be a finitely generated unital commutative ring. Given the discrete topology, the pair  $(\EL_2(R) \ltimes V_n(R), V_n(R))$ has relative property ($T$) for $2 \leq n \leq 4$, where $\EL_2(R)$ is the subgroup of $\SL_2(R)$ generated by all shear matrices. 
\end{Theorem}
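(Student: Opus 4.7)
My plan is to adapt the classical spectral-measure argument of Shalom and Burger \cite{Shalom, Burger} to this setting, reducing the relative property $(T)$ statement to a dynamical disjointness assertion about the $\EL_2(R)$-action on the Pontryagin dual of $V_n(R)$, and then verify that assertion separately for each of the three cases $n=2,3,4$ using explicit shear-matrix formulas. Since $V_n(R)$ is a finitely generated discrete abelian group, its dual $\widehat{V_n(R)}$ is a compact Hausdorff abelian group, and the action of $\EL_2(R)$ on $V_n(R)$ dualises to a continuous action with the trivial character $\chi_0$ fixed. For a unitary representation $\pi$ of $G:=\EL_2(R)\ltimes V_n(R)$ and a unit vector $v$, the SNAG theorem produces a probability measure $\mu_v$ on $\widehat{V_n(R)}$ whose Fourier transform is the matrix coefficient $a\mapsto\langle \pi(a)v,v\rangle$. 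Near-invariance of $v$ under a finite subset of $\EL_2(R)$ translates into near-invariance of $\mu_v$ under the dual action, and the standard reformulation reduces Theorem \ref{one} to showing: there exist a finite set $F\subseteq \EL_2(R)$ and $\delta>0$ such that every $(F,\delta)$-invariant probability measure on $\widehat{V_n(R)}$ concentrates most of its mass arbitrarily close to $\chi_0$.

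The next step is to establish this dynamical concentration statement via an orbit disjointness argument. Writing a character as a tuple $(\chi_0,\dots,\chi_{n-1})\in\widehat R^{\,n}$ in the monomial basis of $\mathrm{Sym}^{n-1}(R^2)$, the upper shear $u^+(t)=\bigl(\begin{smallmatrix}1&t\\0&1\end{smallmatrix}\bigr)$ dualises to an explicit triangular polynomial-in-$t$ transformation which fixes the highest weight coordinate $\chi_{n-1}$ and modifies $\chi_{n-2},\dots,\chi_0$ by polynomial expressions, and similarly for the lower shear $u^-(t)$. The strategy is, given a compact subset $K\subseteq \widehat{V_n(R)}\setminus\{\chi_0\}$, to produce elements $g_1,\dots,g_m\in \EL_2(R)$ whose translates $g_iK$ are pairwise disjoint; this forces any $\EL_2(R)$-almost-invariant probability measure to push mass off $K$. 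For $n=2$ this is the classical Shalom picture and in fact the full result has already been proved in \cite[Appendix~A]{EJK}; for $n=3$ the result follows from \cite[Theorem~3.3]{Ne}. Both can be cited as black boxes, but their proofs also provide the template for the new case.

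The heart of the matter, and where I expect the main obstacle, is the case $n=4$ needed for the $G_2$ extension in Theorem \ref{thm:pseudoparabolic}. Here $\SL_2$ acts on binary cubics $\mathrm{Sym}^3(R^2)$ and preserves the discriminant, so orbits are confined to its level sets and no single shear subgroup acts with infinite orbits on all of $\widehat{V_4(R)}\setminus\{\chi_0\}$. I plan to handle this by combining two ingredients: on the complement of the discriminant locus I will use that the stabiliser of a generic character is virtually trivial, so iterating $u^+(t)$ already produces the needed disjoint translates; on the discriminant-zero locus the stabilisers degenerate to a single unipotent subgroup, and the transverse shear $u^-(t)$ can then be iterated to sweep this locus into the generic one. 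A bounded number of applications of this dichotomy exhausts $\widehat{V_4(R)}\setminus\{\chi_0\}$, yielding the required disjointness. Finally, assembling the pieces via standard stability of relative $(T)$ for abelian extensions \cite[Corollary~2]{CT} completes the argument. The finite generation of $R$ enters only to ensure that the exceptional strata can be covered by finitely many shear translates, so a single finite set $F\subseteq \EL_2(R)$ works uniformly.
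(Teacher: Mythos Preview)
Your reduction to the dual action and the citations for $n=2,3$ are fine, but the plan for $n=4$ has a genuine gap. The discriminant of a binary cubic is a degree-four polynomial in the coefficients; it is an invariant of the $\SL_2$-action on $V_4(R)=\mathrm{Sym}^3(R^2)$, not on its Pontryagin dual. For a general finitely generated ring $R$ the group $\widehat R$ carries no ring structure, so there is no way to evaluate the discriminant (or any nonlinear polynomial) on $\widehat{V_4(R)}\cong\widehat R^{\,4}$, and hence no ``discriminant locus'' to stratify by. The picture you describe --- generic points with virtually trivial stabiliser, a degenerate locus swept away by the transverse shear --- is the correct geometry for the $\SL_2(\dbR)$-action on $\dbR^4\cong\widehat{\dbR^4}$, and this is indeed how the Lie-group case is handled; but it does not transfer to $\widehat R^{\,4}$ once $R$ is, say, $\dbZ[t]$, where $\widehat R$ is an infinite product of circles with no multiplication. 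Your assertion that finite generation of $R$ ``only'' enters through a covering argument is therefore unsupported: without the stratification there is nothing to cover, and the step ``iterating $u^+(t)$ already produces the needed disjoint translates'' has no content for a character in $\widehat R^{\,4}$.

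The paper's route avoids this obstacle by induction on the number of ring generators. The base case $R=\dbZ$ is inherited from the lattice embedding into $\SL_2(\dbR)\ltimes V_4(\dbR)$ (Lemma~\ref{t for z}); the inductive step passes from $R$ to $R[t]$ by identifying $\widehat{R[t]}$ with the formal power-series group $K=\widehat R[[t^{-1}]]$, embedding it in $\widetilde K=\widehat R((t^{-1}))$, and exploiting the purely additive $t$-adic valuation $v$ on $\widetilde K$. Lemma~\ref{TBA} then shows, via a Burger-style partition of $\widetilde K^4\setminus\{0\}$ into regions $A_i$ according to which coordinate attains the maximal valuation, that no mean can be $\frac{1}{22}$-invariant under the eight shears $U_{\pm}^{\pm 1},U_{\pm}^{\pm t}$; Proposition~\ref{two} combines this with the inductive hypothesis (used first to manufacture a vector invariant under the low-degree piece $V_4(R)\oplus V_4(R)t\oplus V_4(R)t^2$) to close the induction. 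The valuation $v$ is the substitute for the missing algebraic structure on $\widehat R$, and is what makes the disjointness estimates go through; if you want to salvage a direct argument for $n=4$, this non-Archimedean device --- not the discriminant --- is the structure to exploit.
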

\begin{remarkstar}\mbox{}
	
	\begin{itemize}
		\item Cases $n=2$ and $n=3$ are proved by Shalom \cite{Shalom} and Neuhauser \cite{Ne}, respectively.
		\item 	The theorem remains true if we substitute $\SL_2(R)$ for $\EL_2(R)$.
		\item $V_n(R) \cong R^n $ as (left) $R$-modules.
		\item 
		It is well known that $\EL_2(\zz)=\SL_2(\zz)$.
	\end{itemize}
\end{remarkstar}

For the sake of subsequent arguments, we elucidate here how the shear matrices in $\SL_2(R)$ act on $V_n(R)$.  

Identify $V_n(R)$ with the space of column vectors $R^n$, and let $s \in R$, and denote the representation map as $\rho_n$. Then $\rho_n(E_{12}(s))$ is given by  
\[ \rho_n(E_{12}(s))_{ki}=\begin{cases}
{{n-k}\choose{i-k}}s^{i-k} \quad &\mbox{ if $k \leq i \leq n$}\\
\ 0  &\mbox{ if $i <k$}
\end{cases}
\]
while 
\[ \rho_n(E_{21}(s))_{ki}=\begin{cases}
{{k-1}\choose{k-i}}s^{k-i} \quad &\mbox{ if $ i \leq k$}\\
\ 0  &\mbox{ if $k < i \leq n $}
\end{cases}
\]
In other words, these two matrices have rows given by expansion of the polynomials $(1+s)^m$. An example for $n=4$ is available on the next page.

For notational convenience, we will denote the matrices $\rho_n(E_{12}(s))$ by $\boldsymbol{U_+^s}$ and $\rho_n(E_{21}(s))$ by $\boldsymbol{U_-^s}$ in the sequel, and may, by abuse of notation, use the $\rho_n(E)$ and $U$'s interchangeably.

By \cite[Proposition 4.3]{Co}, $(\SL_2(\rr) \ltimes V_n(\rr), V_n(\rr))$ is a pair with relative property ($T$). As it was observed in \cite[Section 4.4]{Co}, the following lemma is a clear consequence of 
$\SL_2(\zz) \ltimes V_n(\zz)$ being a lattice in $\SL_2(\rr) \ltimes V_n(\rr)$:
\begin{Lemma}\label{t for z}
$(\SL_2(\zz) \ltimes V_n(\zz), V_n(\zz))$ is a pair with relative property \mbox{(T)} for $n \geq 2$. 
\end{Lemma}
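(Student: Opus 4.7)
The plan is to apply the standard principle that relative property (T) descends from a locally compact group $G$ to a lattice $\Gamma \leq G$ whenever the normal subgroup in question has the form $\Gamma \cap N$ for a closed $N \triangleleft G$ with $\Gamma \cap N$ cocompact in $N$. In the present situation one takes $G := \SL_2(\rr) \ltimes V_n(\rr)$, $N := V_n(\rr)$, and $\Gamma := \SL_2(\zz) \ltimes V_n(\zz)$, so that $\Gamma \cap N = V_n(\zz)$. The relative (T) of $(G, N)$ is exactly the cited result from \cite{Co}, and the fact that $\Gamma$ is a lattice in $G$ follows from $\SL_2(\zz)$ being a (non-uniform) lattice in $\SL_2(\rr)$ and $\zz^n$ being a cocompact lattice in $\rr^n$, together with the compatibility of the semidirect-product structure (the Haar measure on $G$ factors as a product, so a Borel fundamental domain for $\Gamma$ has finite total measure).

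I would implement the descent via the induction--restriction technique. Given a unitary representation $\pi$ of $\Gamma$ with no $V_n(\zz)$-invariant vector, form the induced representation $\tilde\pi := \operatorname{Ind}_\Gamma^G \pi$ of $G$. Since $V_n(\zz)$ is normal in $\Gamma$ and cocompact in $V_n(\rr)$, the space of $V_n(\rr)$-invariants of $\tilde\pi$ is naturally isomorphic to the space of $V_n(\zz)$-invariants of $\pi$, hence trivial. By relative (T) for $(G, V_n(\rr))$ there exist a compact $K \subset G$ and $\delta > 0$ such that $\tilde\pi$ has no $(K, \delta)$-invariant vector. A standard cutoff argument using a precompact set $\mathcal{F} \subset G$ containing a Borel fundamental domain for $\Gamma$ then converts this into the existence of a finite subset $S := \{\gamma \in \Gamma : \gamma K \mathcal{F} \cap \mathcal{F} \neq \emptyset\}$ of $\Gamma$ and $\epsilon > 0$ (depending only on $K$, $\mathcal{F}$, $\delta$) such that $\pi$ has no $(S, \epsilon)$-invariant vector. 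This produces the required Kazhdan pair for $(\Gamma, V_n(\zz))$.

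The main obstacle is the $L^2$ cutoff estimate linking almost-invariance in $\pi$ to almost-invariance in $\tilde\pi$. However, this manipulation is formally identical to the Margulis argument showing that property (T) descends from a locally compact group to any of its lattices (see \cite[Chapter~1]{BHV}); it uses only the finite covolume of $\Gamma$ in $G$ and the bounded multiplicity of the translates $\{\gamma \mathcal{F}\}_{\gamma \in \Gamma}$. Since no feature specific to the representation $V_n$ enters, the conclusion holds uniformly in $n \geq 2$.
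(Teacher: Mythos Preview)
Your proof is correct and follows exactly the approach the paper takes: the paper simply records that $(\SL_2(\rr)\ltimes V_n(\rr),V_n(\rr))$ has relative property~$(T)$ by \cite[Proposition~4.3]{Co} and then invokes the observation from \cite[Section~4.4]{Co} that $\SL_2(\zz)\ltimes V_n(\zz)$ is a lattice in $\SL_2(\rr)\ltimes V_n(\rr)$, so the pair inherits relative~$(T)$. You have supplied the induction--restriction details behind that descent, which the paper leaves implicit, but the strategy is identical.
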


Proving Theorem \ref{one} thus amounts to  generalizing Lemma~\ref{t for z} to general $R$. Since relative property ($T$) is preserved by quotients, the finite generation assumption on $R$  allows us to reduce to the case where $R$ is a finitely generated polynomial ring $\zz[t_1, \ldots t_l]$, which is clearly also a single variable polynomial ring when we write it as $(\zz[t_1\ldots t_{l-1}])[t_l]$. So we assume from now on that $R$ is a  polynomial ring over $\dbZ$.
\vskip.3cm

 Like in \cite[Section 3.2]{Shalom}, we use $K$ to denote ``the additive group of formal power series" $\widehat{R}[[t^{-1}]]=\{\sum_{k=0}^{\infty} \chi_k t^{-k} \mid \chi_k \in \widehat{R}\}$, where $\widehat{R}= \mbox{Hom}(R, \cc^\times)$.  It is topologically isomorphic to $\widehat{R[t]}$, the (Pontryagin) dual of the polynomial ring $R[t]$. We also consider its enlarged version ``additive group of Laurent  series in $t^{-1}$'' $\widehat{R}((t^{-1}))=\{\sum_{k=n}^{\infty} \chi_k t^{-k} \mid \chi_k \in \widehat{R}, n\in \zz \}$, which we denote as $\widetilde{K}$. As $R$ is a polynomial ring over $\zz$, we see that $\widetilde{K}$ is a $t^l$-torsion free $R[t]$ module (for all $l \in \dbZ^+$) under the action defined by standard multiplication of power series, where $(r\chi)(s)=\chi(rs), \ r,s\in R, \chi \in \widehat{R}$. In other words, no nontrivial element in $\widetilde{K}$ is annihilated by $t^l$. It is clear that the free $R[t]$ module $V_n(R[t]) \cong R[t]^n$ has dual isomorphic to ${K}^n$, which embeds in $\widetilde{K}^n$. 
 
 Coming back to $\EL_2(R[t]) \ltimes V_n(R[t])$, note that $\EL_2(R[t])$ acts on $K^n$, and consequently on $\widetilde{K}^n$, by the usual contragradient action: writing $\widetilde{K}^n$ as row vectors $(x_1, \ldots x_n)$ where  $x_i \in \widetilde{K}$, the action of $E_{12}(s)$ (resp. $E_{21}(s)$) corresponds to right multiplication by the matrix $U_+^{-s}$ (resp. $U_-^{-s}$). 
 
 \vskip .1cm
 \noindent\textit{\textbf{Example}}. When $n=4$, 
 \begin{eqnarray*}
 	U_+^{s}=\begin{pmatrix}
 		1&3s&3s^2&s^3\\
 		0& 1& 2s& s^2\\
 		0& 0& 1 & s\\
 		0&0&0&1\\
 	\end{pmatrix} \quad \quad U_-^{s}=\begin{pmatrix}
 	1&0&0&0\\
 	s& 1& 0& 0\\
 	s^2& 2s& 1 & 0\\
 	s^3&3s^2&3s&1\\
 \end{pmatrix}
\end{eqnarray*}

Their action on $\widetilde{K}^4$ may be explicity written as:
\begin{eqnarray}
(a, b, c, d). U_+^s &= &(a, 3sa+b, 3s^2a+2sb+c, s^3a+s^2b+sc+d ) \label{Ktildaaction1}\\
(a, b, c, d). U_-^s &= &(a+sb+s^2c+s^3d, b+2sc+3s^2d, c+3sd, d ) \label{Ktildaaction2}
\end{eqnarray}

 The following lemma is the pivotal ingredient of this appendix:

\begin{Lemma}\label{TBA}
	Let $\mu$ be a mean (i.e. a finitely additive probability measure) defined on the Borel sets of $\widetilde{K}^4 \backslash \{0\}
	$. Then there exists
	a Borel set $S \subset \widetilde{K}^4 \backslash \{0\}$ and an element $g \in O_t = 
	\{U_\pm^{\pm t}, U_\pm^{ \pm  1}\}$ such that $\left| \mu(S.g) - \mu (S) \right| \geq \frac{1}{22}$.
	\end{Lemma}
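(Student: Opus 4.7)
I would proceed by contradiction: suppose that $|\mu(S\cdot g)-\mu(S)|<\frac{1}{22}$ for every Borel subset $S\subseteq\widetilde{K}^4\setminus\{0\}$ and every $g\in O_t$, and extract a contradiction from a finite ``tropical'' combinatorics of the action.

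The first step is to introduce a pair of weight functions adapted respectively to the positive and negative unipotent subgroups. Let $v\colon\widetilde K\to\mathbb Z\cup\{-\infty\}$ be the $t$-adic valuation, with $v\bigl(\sum_{k\ge n}\chi_k t^{-k}\bigr)=-n$ when $\chi_n\ne 0$, and $v(0)=-\infty$. For $x=(a,b,c,d)\in\widetilde{K}^4\setminus\{0\}$ set
\[
\phi_+(x)=\max\bigl(v(a),\,v(b)-1,\,v(c)-2,\,v(d)-3\bigr),\qquad
\phi_-(x)=\max\bigl(v(a)-3,\,v(b)-2,\,v(c)-1,\,v(d)\bigr).
\]
A direct computation using formulas (A.1) and (A.2) yields three crucial facts: $\phi_+$ is invariant under $U_+^s$ for every $s\in R[t]$ (in particular under all four moves $U_+^{\pm 1},U_+^{\pm t}\in O_t$); symmetrically, $\phi_-$ is invariant under every $U_-^s$; and outside a ``cancellation locus'' where leading terms happen to vanish, one has the shift relations $\phi_-(x\cdot U_+^t)=\phi_+(x)+3$ and $\phi_+(x\cdot U_-^t)=\phi_-(x)+3$. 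Thus the eight elements of $O_t$ act on the pair $(\phi_+,\phi_-)$ in a completely controlled way, with $U_\pm^{\pm 1}$ preserving both weights in generic position and $U_\pm^{\pm t}$ converting one into the other plus $3$.

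Next I would partition $\widetilde K^4\setminus\{0\}$ into finitely many Borel cells indexed by (i) the least index in $\{0,1,2,3\}$ realizing the maximum in $\phi_+(x)$, (ii) the greatest index realizing the maximum in $\phi_-(x)$, and (iii) enough discrete leading-coefficient data to track the constant moves $U_\pm^{\pm 1}$. The invariance of $\phi_+$ (resp.\ $\phi_-$) under $\langle U_+^s\rangle$ (resp.\ $\langle U_-^s\rangle$), combined with the shift relations, makes the action of each $g\in O_t$ on this finite cell system explicit. I would then exhibit a chain of pairwise disjoint cells $C_0,C_1,\dots,C_N$ connected by arrows $C_i=C_{i-1}\cdot g_i$ for moves $g_i\in O_t$. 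Iterating the standing hypothesis gives $\mu(C_i)>\mu(C_0)-i/22$, and the constraint $\sum_{i=0}^{N}\mu(C_i)\le 1$ forces an upper bound on $\mu(C_0)$ that contradicts the pigeon-hole lower bound on a heaviest cell once $N$ is chosen appropriately.

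The main obstacle is the combinatorial design: the partition must be coarse enough that each of the eight elements of $O_t$ sends every cell into one specific sibling cell (not a union), yet fine enough that the cancellation loci have measure zero within each cell; and the chain must be long enough, with starting cell $C_0$ heavy enough, to produce the explicit constant $\frac{1}{22}$. This is the analog for $V_4$ of the simpler cycles used by Shalom for $V_2$ and by Neuhauser for $V_3$, the additional richness coming from the fact that $\phi_+$ and $\phi_-$ now range over four indices each and their $U_\pm^{\pm t}$-interaction graph is correspondingly larger.
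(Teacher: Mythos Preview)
Your weighted valuations $\phi_\pm$ are a genuinely different organizing device from the paper's proof, and the invariance $\phi_+(x\cdot U_+^s)=\phi_+(x)$ is correct and elegant. The paper instead uses the unweighted sets $A_i=\{x:v(x_i)=\max_j v(x_j)\}$ and $A_i^o$ (strict maximum), and rather than building a single chain, it assembles a \emph{covering} $\widetilde K^4\setminus\{0\}\subseteq A_1\cup A_4\cup(A_2^o\sqcup A_3^o)\cup(B\setminus S)\cup S$ and bounds each piece separately (by $4C,4C,3C,3C,8C$ respectively) to force $1<22C$. Your framework might well reproduce such a covering more cleanly; the advantage of the paper's version is that every step is written down and the constant $22$ drops out of the explicit arithmetic.

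There is, however, a genuine gap in your plan. You propose that the cells be ``fine enough that the cancellation loci have measure zero within each cell.'' For a \emph{mean} this is not available: $\mu$ is only finitely additive, there is no regularity or countable additivity to appeal to, and no nontrivial Borel set can be declared null a priori. In the paper the analogous cancellation set $S=\{\chi\in B:\mathbb L(t^3x_1)+\mathbb L(t^2x_2)=0\}$ is \emph{not} thrown away; instead one observes that on $S$ the leading-term equation forced by one column of $U_+^t$ is incompatible (over the $t$-torsion-free module $\widetilde K$) with the equation from the adjacent column, so $S\cdot U_+^t\subseteq A_3^o\sqcup A_4$, yielding $\mu(S)<8C$. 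Any correct version of your argument will likewise have to route the cancellation cells somewhere by a further move in $O_t$, not discard them.

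A second, related point: the ``chain of pairwise disjoint cells $C_i=C_{i-1}\cdot g_i$'' combined with a pigeonhole bound on the heaviest cell is stated as a program, not carried out. Because the cancellation loci cannot be excised, each $g_i$ will in fact send a cell into a \emph{union} of sibling cells, so the clean chain picture does not survive; one ends up with exactly the kind of branching inequalities the paper tracks. Until you write down the actual cell list, the transition maps, and the resulting inequalities, the constant $\tfrac{1}{22}$ is unjustified.
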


\begin{remarkstar}
	Note that $O_t$ is the set $\{E_{12}(\pm 1), E_{21}(\pm 1),E_{12}(\pm t),  E_{21}(\pm t)\} \subset \EL_2(R[t])$.
\end{remarkstar}

\begin{proof}
	
	Let us assume the contrary, namely all Borel subsets of $\widetilde{K}^4 \backslash \{0\}$ are $(O_t, C)$-invariant, where $C=\frac{1}{22}$.
	
Define $v: \widetilde{K} \rightarrow \zz \ \cup \{-\infty\}$ on $\widetilde{K}$ using the highest power of $t$ in $\chi$:
	\[
	v(\chi)=\begin{cases}
	{-m} \quad &\mbox{ if } \chi=\sum\limits_{n=m}^{+\infty} \chi_n t^{-n}, \chi_m\neq 0 \\
	-\infty \quad &\mbox{ if } \chi=0.\\
	\end{cases}
	\]
Then $v$ enjoys the properties  $v(t\chi)=v(\chi)+1$ and $v(\chi+\psi) \leq max\{v (\chi) , v (\psi)\}$, where for $v (\chi) \neq v (\psi)$ even equality holds.  
	
	For any $\chi =\sum_{n=m}^{\infty}\chi_n t^{-n} \in \widetilde{K}$ (where $\chi_m \neq 0$), we define the ``leading term map" 
$\dbL(\chi)=\chi_m t^{-m}$. It might be worth noting that this map is $\zz$-linear.

		 Now define in $\widetilde{K}^4 \backslash \{0\}$ the sets $A_i=\{\chi=(x_1, \ldots ,x_4) | v(x_i)= \max\limits_{1\leq j \leq 4}v(x_j) \}$: these are the elements in $\widetilde{K}^4$ with maximal valuation attained at the $i$-th component. We also define $A_i^o=\{\chi\in A_i | v(x_i)> \max\limits_{1\leq j \leq 4, i\neq j}v(x_j) \}$, where maximal valuation is strictly attained at the $i$-th component. It is clear that $\widetilde{K}^4 \backslash \{0\}=\bigcup\limits_{i=1}^{4}A_i$. 
		 
Now we proceed with computations, which will be based on the equations $(\ref{Ktildaaction1}), (\ref{Ktildaaction2})$.
		  
The right multiplication by $U_+^{1}$ (resp. $U_-^{1}$) sends
		 		\begin{eqnarray}\label{1}
		 			A_2^o \sqcup 
		 			A_3^o \ \  \rightarrow \ \  A_{4}\backslash A_{4}^o \ ( \mbox{resp. }A_{1}\backslash A_{1}^o).  
		 			\end{eqnarray}
		 			
		 			Let us work out here the first case of (\ref{1}) as an example of the reasoning behind the calculations we are doing : Let $(x_1,x_2,x_3,x_4) \in A_2^o$, i.e. the maximal valuation among $x_1,x_2,x_3,x_4$ is attained \textbf{only} at $x_2$. So by setting $s=1$ in  (\ref{Ktildaaction1}), it follows that $(x_1,x_2,x_3,x_4).U^1_+=(x_1, 3x_1+x_2, 3x_1+2x_2+x_3, x_1+x_2+x_3+x_4 ) $ has maximal valuation attained at least at the second and fourth components; namely $U_+^1$ sends $A_2^o$ into $A_4 \backslash A_4^o$. For $(x_1,x_2,x_3,x_4) \in A_3^o$, i.e. the maximal valuation among $x_1,x_2,x_3,x_4$ is attained \textbf{only} at $x_3$, the same calculation shows that $(x_1,x_2,x_3,x_4).U^1_+$ attains the maximal valuation at the third and fourth components; namely $U_+^1$ sends $A_3^o$ into $A_4 \backslash A_4^o$.
		 			
		 			Arguing using methods similar to the previous paragraph, we see that the  right multiplication by $ U_+^{t}$, followed by $ U_-^{1} $ sends 
		 	\begin{eqnarray}\label{2}
		 		A_1 \rightarrow A_{4}^o \rightarrow  \{\chi \in A_{1}| v(x_1)=v(x_4) \}=:E.
		 			\end{eqnarray} 
		 			
		 			By symmetry, we get that the  right multiplication by $U_-^{t}\cdot U_+^{1} $ sends		\begin{eqnarray}\label{3}
		 				A_4 \rightarrow A_{1}^o \rightarrow \{\chi \in A_{4}| v(x_1)=v(x_4) \}=E.
		 			\end{eqnarray} 	 		
		 			
		 			Now by (\ref{2},\ref{3}) and $ {C}$-invariance of $\mu$, we derive that \begin{eqnarray}\label{4}
		 			\mu(A_1\backslash E) <2C , \mbox{ and } \ \mu(A_4\backslash E) <2C.
		 			\end{eqnarray}
		 			
		 			Similarly to (\ref{2}), the right multiplication by $U_+^{t} \cdot U_-^{t}$ or $U_-^{t} \cdot U_+^{t}$  sends 
		 			\begin{eqnarray}\label{5}
		 			A_1 \rightarrow A_{4}^o \rightarrow  A_1^o \mbox{ or } A_4 \rightarrow A_{1}^o \rightarrow  A_4^o,  \mbox{ respectively }.
		 			\end{eqnarray} 
		 			
		 			This implies the bounds 
		 			\begin{eqnarray}\label{6}
		 			\mu(A_1\backslash A_1^o) <2C , \ \mu(A_4\backslash A_4^o) <2C.
		 			\end{eqnarray}
		 			
		 			Combining (\ref{1}, \ref{6}) and $ {C}$-invariance of $\mu$, we yield \begin{eqnarray}\label{7}
		 			\mu(A_2^o \sqcup 
		 			A_3^o ) <C+\mu(A_1\backslash A_1^o)<3C.
		 			\end{eqnarray}
		 			
		 			Noting that $E \cap (A_1^o \cup A_4^o)=\emptyset$, (\ref{4}), together with  the $2C$ bounds on $\mu(A_1\backslash A_1^o)$ and $\mu(A_4\backslash A_4^o)$ tell us 
		 			\begin{eqnarray}\label{8}
		 			\mu(A_1) <4C \mbox{ and } \mu(A_4) <4C
		 			\end{eqnarray}
		 			
		 			Thus we are now left to estimate the size of $B=(A_2 \cap A_3) \backslash (A_1 \cup A_4)$. We start by considering $S:= \{\chi \in B \  | \  \dbL(t^3 x_1)+ \dbL(t^2 x_2)=0 \}$. It is clear that $U_+^t . (B\backslash S) \subseteq A_4^o$, so we have by virtue of (\ref{4}) and $E \cap (A_1^o \cup A_4^o)=\emptyset$ that	$\mu(B\backslash S) < C+ \mu(A_4 \backslash
		 			 E)<C+2C=3C $. As for $S$, using the fact that there is no $t$-torsion in $\widetilde{K}^4$, we note that the equations $\dbL(t^3 x_1)+ \dbL(t^2x_2)=0$ (which corresponds to the last column of $U_+^t$) and $3\dbL(t^2 x_1)+ 2\dbL(t x_2)=0$(which corresponds to the 3rd column of $U_+^t$) cannot happen simultaneously as we are working in $B$. Therefore $U_+^t . (S) \subseteq A_3^o \sqcup A_4$, so we have by (\ref{4},\ref{6},\ref{7}) that  $\mu(S) < C+3C+4C=8C$.
		 			
		 			Summing up the computations above, we get
		 		\begin{eqnarray*}
		 			1=\mu(\widetilde{K}^4 \backslash \{0\}) \leq& \mu(A_1)+\mu(A_4)+\mu(A_2^o \sqcup 
		 			A_3^o )+\mu(B \backslash S) +\mu(S) \\ <& \newline 4C+4C+3C+3C+8C=22C =1, \quad \quad \quad 
		 		\end{eqnarray*} which is clearly a contradiction.	 \end{proof}

Let us fix some notations: for $s \in R[t]$, $V_n(R)s$ is the $V_n(R)$-submodule of $V_n(R[t])$ where all $n$ coordinates of its elements are 
$R$-multiples of $s$. It is clear that the group pairs $(\EL_2(R) \ltimes V_n(R)t^i, V_n(R)t^i), i\in \mathbb{Z}^{\geq 0}$ are mutually isomorphic. So if $(Q,\eps)$ is a Kazhdan pair of $(\EL_2(R) \ltimes V_n(R), V_n(R))$ (see Definition~\ref{normal subgroup rel t}), then  the images $Q_i$ of $Q$ under the isomorphisms, along with $\eps$, give Kazhdan pairs for $(\EL_2(R) \ltimes V_n(R)t^i, V_n(R)t^i)$. We also denote by $W_i$ the free 
$R$-modules $V_n(R)t^i$, and set  $W_{-1}=\{\mbox{the zero element in } V_n(R[t])\}$. 

Since we have already established Lemma \ref{t for z}, the remaining parts of Theorem \ref{one} is a straightforward consequence of applying successively the following

\begin{Proposition}\label{two}
	 Let $R$ be a finitely generated polynomial ring over $\zz$ such that $(Q,\eps)$ is a Kazhdan pair for  $(\EL_2(R) \ltimes V_n(R), V_n(R))$, and $\{E_{12}(\pm 1), E_{21}(\pm 1)\} \subseteq Q$. Define  $Q_t=Q\cup \bigcup\limits_{i=1}^{n-2}Q_i \cup \{E_{12}(\pm t), E_{21}(\pm t)\} $. 
	 Then for $n=4$, there exists a $\delta \in (0, \eps)$ such that the pair $(\EL_2(R[t]) \ltimes V_4(R[t]), V_4(R[t]))$ has relative property ($T$), with Kazhdan pair $(Q_t, \delta)$.  Moreover, $(Q_t,\delta)$ is a Kazhdan pair for $(\langle Q_t \rangle \ltimes V_4(R[t]),V_4(R[t]))$, and therefore the pair  $(\langle Q_t \rangle \ltimes V_4(R[t]),V_4(R[t]))$ has relative property $(T)$.
\end{Proposition}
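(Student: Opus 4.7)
My plan follows a Shalom-type measure-theoretic argument by contradiction, with Lemma~\ref{TBA} as the decisive ingredient. Assume the conclusion fails: for each $k \geq 1$ there exist a unitary representation $(\pi_k, \mathcal{H}_k)$ of $G_t := \EL_2(R[t]) \ltimes V_4(R[t])$ and a $(Q_t, 1/k)$-invariant unit vector $v_k \in \mathcal{H}_k$, yet $\mathcal{H}_k$ contains no nonzero $V_4(R[t])$-invariant vector.

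First I would apply the spectral theorem to the restriction $\pi_k|_{V_4(R[t])}$ to produce projection-valued measures $P_k$ on the Pontryagin dual $\widehat{V_4(R[t])} \cong K^4$. Setting $\mu_k(S) := \|P_k(S) v_k\|^2$ yields Borel probability measures on $K^4$, supported on $K^4 \setminus \{0\}$ since $P_k(\{0\}) = 0$. Pushing forward under the natural inclusion $K^4 \hookrightarrow \widetilde{K}^4$, I view the $\mu_k$ as probability measures on $\widetilde{K}^4 \setminus \{0\}$. Using the covariance $P_k(g \cdot S) = \pi_k(g) P_k(S) \pi_k(g)^{-1}$, one checks that $|\mu_k(g \cdot S) - \mu_k(S)| \leq 2\|\pi_k(g) v_k - v_k\|$ for each fixed $g$, so the $\mu_k$ become asymptotically invariant under the dual action of $Q_t$, in particular of $O_t \subseteq Q_t$. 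Choosing a non-principal ultrafilter $\omega$ on $\mathbb{N}$ and setting $\mu(S) := \lim_\omega \mu_k(S)$ then produces a mean on $\widetilde{K}^4 \setminus \{0\}$ that is exactly $O_t$-invariant, directly contradicting Lemma~\ref{TBA} and so supplying the desired $\delta$.

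The main obstacle is matching the dual action of $\EL_2(R[t])$ on $K^4$ with the right-multiplication formulas (\ref{Ktildaaction1}) and (\ref{Ktildaaction2}) on $\widetilde{K}^4$ used to prove Lemma~\ref{TBA}. The natural contragredient action on $K^4$ involves a truncation of positive-$t$-degree terms when $t$ acts, whereas Lemma~\ref{TBA} uses honest multiplication on $\widetilde{K}^4$. The remedy, standard since \cite{Shalom}, is to enlarge the state space by regarding $V_4(R[t])$ as a subgroup of $V_4(R[t, t^{-1}])$, whose Pontryagin dual is exactly $\widetilde{K}^4$; the Kazhdan subsets $Q_i \subseteq Q_t$ corresponding to the $t$-shifted copies $V_4(R) t^i$ (with $i = 1, 2$ for $n = 4$) provide the additional almost-invariance needed to propagate the measures across different $t$-degree strata in this enlargement. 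This is where the polynomial-ring structure of $R$ enters decisively, since it guarantees the $t^l$-torsion freeness of $\widetilde{K}$ exploited in the proof of Lemma~\ref{TBA}.

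For the ``moreover'' clause, every group element used in the argument above lies in $Q_t$, so the same $\delta > 0$ is a Kazhdan constant for the pair $(\langle Q_t \rangle \ltimes V_4(R[t]), V_4(R[t]))$, which yields the relative property $(T)$ of that pair as claimed.
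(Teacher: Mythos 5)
Your overall architecture is the same as the paper's—a Shalom-type measure-theoretic contradiction with Lemma~\ref{TBA} as the decisive input—but there is a genuine gap in the passage from $K^4$ to $\widetilde{K}^4$ that you identify but do not actually close.

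The problem: your measures $\mu_k$ live on $K^4 = \widehat{V_4(R[t])}$ and are almost-invariant for the \emph{dual} (contragredient) action of $\EL_2(R[t])$ on $K^4$. That action is \emph{not} given by honest right multiplication by $U_\pm^{\pm 1}, U_\pm^{\pm t}$ on power series in $t^{-1}$: for $g = E_{12}(\pm t)$ (or $E_{21}(\pm t)$), the right-multiplication image of a character in $K^4$ generally escapes into $\widetilde{K}^4 \setminus K^4$, and the dual action on $K^4$ is obtained by truncating those positive-degree-in-$t$ terms. So simply pushing each $\mu_k$ forward along $K^4 \hookrightarrow \widetilde{K}^4$ and taking an ultralimit does \emph{not} yield a mean on $\widetilde{K}^4 \setminus \{0\}$ invariant under right multiplication by $O_t$; you only get invariance under the truncated action, which is not what Lemma~\ref{TBA} requires. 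Your proposed remedy—regarding $V_4(R[t])$ inside $V_4(R[t,t^{-1}])$—does not resolve this as stated, because the representation $\pi_k$ is a representation of $\EL_2(R[t]) \ltimes V_4(R[t])$, not of any group containing $V_4(R[t,t^{-1}])$; there is no projection-valued measure on $\widetilde{K}^4$ at hand. And "the $Q_i$ provide the additional almost-invariance needed to propagate the measures across different $t$-degree strata" describes an effect without specifying a mechanism.

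The paper's resolution is concrete and is exactly where the $Q_i \subseteq Q_t$ are used: \emph{before} constructing the measure, one iterates a standard projection argument (the paper's Step~1 Claim, applied three times for $j=-1,0,1$) to replace the given $(Q_t,\delta)$-almost-invariant vector by a unit vector $\eta_0$ that is genuinely invariant under $V_4(R)$, $V_4(R)t$, and $V_4(R)t^2$ while remaining $(Q_t, 1/44)$-invariant. Each iteration uses the Kazhdan pair $(Q_{j+1}, \eps)$ for $(\EL_2(R)\ltimes V_4(R)t^{j+1}, V_4(R)t^{j+1})$ to control the component of the vector outside $\mathcal{H}^{W_{j+1}}$. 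The spectral measure of $\eta_0$ is then supported on the set $X$ of characters trivial on $V_4(R+Rt+Rt^2)$, i.e., characters all of whose components have valuation $\leq -3$. On $X$, multiplication by $O_t$ raises the valuation by at most $3$, so $gX \subseteq K^4 \subset \widetilde{K}^4$ and the truncation never occurs: dual action and honest right multiplication coincide on the support of $\mu$. Only then does the $|\mu(gS)-\mu(S)| < 1/22$ almost-invariance transfer verbatim to $\widetilde{K}^4 \setminus \{0\}$ and collide with Lemma~\ref{TBA}. Your write-up needs this Step~1 reduction (or an equivalent device) made explicit; without it the contradiction is not reached.

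Minor remarks: your inequality $|\mu_k(g\cdot S) - \mu_k(S)| \leq 2\|\pi_k(g)v_k - v_k\|$ is correct, and the ultrafilter-versus-single-representation distinction is cosmetic (the paper works with one representation and a fixed $\delta$ chosen small enough that $\eta_0$ is $(Q_t, 1/44)$-invariant). The "moreover" part you state correctly: the argument only uses elements of $Q_t$, so $(Q_t,\delta)$ is a relative Kazhdan pair for $\langle Q_t \rangle \ltimes V_4(R[t])$ as well.
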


\begin{remarkstar}
	As our proof is similar to \cite[Propsition 4.3.1]{BHV}, we encourage the reader to refer this source for certain omitted details.
\end{remarkstar}
\begin{proof}
	Let $(\pi, \mathcal{H})$ be a unitary representation of $\EL_2(R[t]) \ltimes V_4(R[t])$ which has a $(Q_t,\delta)$-invariant vector $\xi \in \mathcal{H}$ and {\it assume by contradiction that $\mathcal{H}$ has no non-zero vector invariant under $V_4(R[t])\cong (R[t])^4$}. Let $\mathcal{H}_0$ be the subspace of all $V_4(R)$-invariant vectors in $\mathcal{H}$. The proof consists of three steps:

	\noindent$\bullet$ {\it Step 1:} Obtain a $(Q_t, \frac{1}{44})$-invariant unit vector $\boldsymbol{\eta_0}$ inside the subspace of $\bigcup\limits_{i=0}^2 V_4(R)t^i$-invariant vectors. 
\vskip .2cm	
	\noindent{\textbf{\textit{Claim}}}
	Let $-1\leq j \leq 1$ be an integer and $\delta_j \in (0,\eps)$ a real number. If $\mathcal{H}$ contains  a $(Q_t,\delta_j)$-invariant unit vector $v$ in $\bigcap\limits_{i=-1}^j\mathcal{H}^{W_i}$ (i.e. the  subspace of $\bigcup\limits_{i=-1}^j W_i$-invariant vectors), then it also contains, for some $\delta_{j+1}>0$, a $(Q_{t},\delta_{j+1})$-invariant unit vector $w$ in $\bigcap\limits_{i=-1}^{j+1}\mathcal{H}^{W_i}$. Moreover, $\delta_{j+1} \rightarrow 0$ as $\delta_j \rightarrow 0$.
	
\begin{proof}[Proof of Claim] Denote $\mathcal{H}_{[0,j]}:=\bigcap\limits_{i=0}^j\mathcal{H}^{W_i}$.
It admits an orthogonal decomposition $\mathcal{H}_{[0,j]}=\mathcal{H}_{[0.j+1]}\oplus \mathcal{H}_{[0.j+1]}^{c}$, yielding $v=v_0+v_1$.

Noting that all $V_4(R)t^i$ and $V_4(R[t])$ are abelian normal subgroups of $\EL_2(R) \ltimes V_n(R[t])$, we see that both $\mathcal{H}_{[0.j+1]}$ and $\mathcal{H}_{[0.j+1]}^{c}$ are $\EL_2(R) \ltimes V_n(R[t])$-subrepresentations in $\mathcal{H}$. Therefore for all $g \in Q_t$, 
\[ \norm{\pi(g)v_1-v_1}^2 \leq \norm{\pi(g)v-v}\leq \delta_j^2 \leq \eps^2 .\]

As $\mathcal{H}_{[0.j+1]}^{c}$ contains no non-zero $W_{j+1}$-invariant vector, and $(Q_{j+1}, \eps)$ is a Kazhdan subset for $(\EL_2(R) \ltimes V_n(R)t^{j+1}, V_n(R)t^{j+1})$, there exists $g_0 \in Q_{j+1}$ such that 
\[ \eps^2\norm{v_1}^2 \leq \norm{\pi(g_0)v_1-v_1}^2.\] Along with the previous inequality, this implies that $ \eps^2\norm{v_1}^2 \leq \delta_j^2$. Finally,  $\norm{v_0}^2 \geq 1-\delta_j^2/\eps^2$ is a consequence of $1= \norm{v_0}^2+\norm{v_1}^2$.

Thus for $w=v_0/\norm{v_0}$, we obtain for every $g\in Q_t$
\begin{eqnarray*}
\norm{\pi(g)w-w}^2 \leq& &\frac{1}{\norm{v_0}^2} (\norm{\pi(g)v-v}+\norm{\pi(g)v_1-v_1})^2\\ 
\leq& &\frac{1}{1-\delta_j^2/\eps^2}(\delta_j + 2\delta_j/\eps)^2(\boldsymbol{= :\delta_{j+1}^2}). 
\end{eqnarray*}
Noting that $0<\delta_j < \eps$, this equation proves the claim. 
\end{proof}

Thus by starting with $\mathcal{H}_{[0,-1]}=\mathcal{H}$ and choosing our initial $\delta$  such that $\delta/\eps$ is small enough, we can apply the Claim three consecutive times (covering cases $-1\leq j \leq 1$) to obtain our desired $(Q_t, \frac{1}{44})$-invariant unit vector $\boldsymbol{\eta_0}$.

Recall the notation $K=\widehat{R}[[t^{-1}]]$ and $\widetilde{K}=\widehat{R}((t^{-1}))$. Now let $E$ be the projection valued measure on $\widehat{V_4(R[t])}=\widehat{R[t]^4}(\cong K^4)$ given by the restriction of $\pi$ to $R[t]^4$. The fact that $\mathcal{H}$ has no nonzero 
$R[t]^4$-invariant vectors implies $E(\{0\})=0$. Denote by $X$ the subset of $\widehat{R[t]^4}$ of all elements $\chi$ such that $\chi|_{V_4(R+Rt+Rt^2)}=1$. This set enjoys the property that $E(X)$ is the orthogonal projection to $\bigcap\limits_{i=-1}^2\mathcal{H}^{W_i}$.

	\noindent$\bullet$ {\it Step 2:} 
	Let $\mu$ be the probability measure defined on the Borel sets of $K^4$ by $\mu(B)=\langle E(B)\boldsymbol{\eta_0} , \boldsymbol{\eta_0}\rangle$. Then $\mu(\{0\})=0$ and $\mu(X)=1$. As in \cite[Propsition 4.3.1, Steps 2,3]{BHV}, every Borel set $S$ of $K^4$ satisfies $|\mu(gS)-\mu(S)|<\frac{1}{22}$ for all $g\in O_t$ (defined as in Lemma \ref{TBA}). 
	
	It is clear that we may extend $\mu$ to a probability measure on $\widetilde{K}^4 \backslash\{0\}$.  Since the  identification of  $X$ with a subset of $K^4$  mandates that all four components of its elements must take the form $\sum\limits_{i=3}^{\infty} \chi_n t^{-i}, \ \chi_n \in \widehat{R}$, it follows that    $gX \subseteq K^4 \subset \widetilde{K}^4$   for all $g\in O_t$ for the group action on $\widetilde{K}^4$. Therefore the inequality of the previous paragraph applies to $\widetilde{K}^4 \backslash\{0\}$ as well: every Borel set $S$ of $\widetilde{K}^4$ satisfies $|\mu(gS)-\mu(S)|<\frac{1}{22}$ for all $g\in O_t$. This definitely contradicts Lemma \ref{TBA}, and the proof (for the main part of the proposition) is finished.
	
	Now the remaining part is actually an easy consequence: 
	Since in our proof above we used among all elements of $\EL_2(R[t])$ only the ones in $Q_t$ when we consider actions on $V_n$, the ``moreover'' part of the proposition clearly follows.
\end{proof}

We are now ready to generalize Theorem~\ref{thm:pseudoparabolic} :

\begin{Theorem}\label{original theorem}
	Let $A = (a_{ij})$ be an indecomposable $2$-spherical GCM of size $d\geq 2$ , and let $I$ be a subset of $\{1, 2, \ldots ,d\}$ such that for every
	$1 \leq i \leq d$ there exists $j \in  I$ such that $j \neq i$ and $a_{ij} \neq 0$. If $R$ is any finitely generated
	commutative ring which does not have proper ideals of index less than $(2d-2)^2$, then the pseudo-parabolic subgroup $P_{A,I} (R)$ has property ($T$).
\end{Theorem}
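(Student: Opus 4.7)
The plan is to adapt the proof of Theorem~\ref{thm:pseudoparabolic} to handle triple edges, with Theorem~\ref{one} supplying the new piece of relative property~(T) needed. I would reuse the combinatorial setup: partition $\{\alpha_i : i \in I\} = \Pi_1 \sqcup \Pi_2$ with $\Pi_1$ a maximal pairwise disconnected subset, set $\Pi_3 = \{\alpha_i : i \notin I\}$, $w = \prod_{\alpha \in \Pi_1} s_\alpha$, $\Lambda = w(-\Pi_2)$, $\Sigma = \Pi \cup \Lambda$. The combinatorial facts verified in the original proof --- $|\Sigma| < 2d$, no two roots in $\Sigma$ summing to zero, pairs in $\Sigma$ either commuting or lying in a common $\dbZ\alpha_i + \dbZ\alpha_j$ plane, and $\{X_\gamma\}_{\gamma \in \Sigma}$ generating $P_{A,I}(R)$ --- carry over without change.

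The almost-orthogonality criterion (Theorem~\ref{propT_criterion}) then reduces the theorem to (a) orthogonality bounds $\orth(X_\gamma, X_\delta) < 1/(|\Sigma|-1)$ for distinct $\gamma, \delta \in \Sigma$, and (b) relative property~(T) for each pair $(P_{A,I}(R), X_\gamma)$. For (b), when $\gamma \in \Pi_1 \cup \Pi_2 \cup \Lambda$ I would invoke Theorem~\ref{thm:main} applied to the submatrix $A_I$ (whose Dynkin diagram has no isolated vertices by hypothesis on $I$). When $\gamma \in \Pi_3$, I would choose $\delta \in \Pi_1 \cup \Pi_2$ connected to $\gamma$, form $H = \langle X_\gamma, X_{\pm \delta}\rangle$ and the normal subgroup $N = \langle X_\lambda : \lambda \in \Psi^+ \setminus \{\delta\}\rangle$ (with $\Psi$ the rank-2 subsystem spanned by $\gamma,\delta$), and reduce to relative~(T) for $(H,N)$. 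The cases $\Psi \in \{A_2, B_2\}$ are handled exactly as in the proof of Theorem~\ref{thm:pseudoparabolic}.

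The new case is $\Psi = G_2$. When $\gamma$ is long and $\delta$ is short, the $\delta$-string through $\gamma$ has length 4, so $N/X_{2\gamma+3\delta}$ is a free rank-4 $R$-module on which $\langle X_{\pm\delta}\rangle$ acts via the representation $V_4(R)$. Theorem~\ref{one} (with $n = 4$) then yields relative~(T) for $(H/X_{2\gamma+3\delta}, N/X_{2\gamma+3\delta})$, and since $X_{2\gamma+3\delta}$ is central in $H$ --- one checks $\langle \delta^\vee, 2\gamma+3\delta\rangle = 0$ and $[X_\gamma, X_{2\gamma+3\delta}] = 1$ using the $G_2$ commutator relations of \S2.3 --- this lifts to relative~(T) for $(H,N)$ via a central-extension argument, analogous to the treatment of the $B_2$ long-root case (using for instance \cite[Corollary~2]{CT}). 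When $\gamma$ is short and $\delta$ is long, the analogous argument applies with Shalom's theorem (the $n=2$ case of Theorem~\ref{one}) in place of the $n=4$ case.

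The main obstacle is (a): Corollary~\ref{cor:orth} gives the desired bound $\orth \leq 1/\sqrt{m(R)}$ only when $\Psi$ is $A_1 \times A_1$ or $A_2$, whereas $B_2$ or $G_2$ pairs naively require $m(R)$ of order $(2d-2)^4$ or $(2d-2)^{16}$, respectively, contradicting the stated uniform threshold $(2d-2)^2$. Resolving this requires refining the choice of $\Sigma$ so that no two of its roots both lie in a $B_2$ or $G_2$ subsystem --- for example, by substituting the long root $2\gamma+3\delta$ for a simple short root in a $G_2$-pair, converting it into an $A_2$-pair of long roots --- while simultaneously preserving the generation property of Theorem~\ref{thm:prenilpotentgeneration} and keeping each $X_\gamma$ eligible for the relative~(T) argument of step (b). Reconciling this combinatorial refinement with the analytic arguments is the technical heart of the proof.
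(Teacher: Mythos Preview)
Your high-level plan matches the paper's: reuse the combinatorial setup of Theorem~\ref{thm:pseudoparabolic} unchanged and add the missing $G_2$ relative-$(T)$ case. However, two parts of your proposal diverge from what the paper actually does, and one of them is a genuine gap.

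\textbf{The orthogonality ``obstacle'' is a red herring.} The paper does \emph{not} refine $\Sigma$ to avoid $B_2$ or $G_2$ pairs; it simply borrows the orthogonality estimates from Theorem~\ref{thm:pseudoparabolic} verbatim. The threshold $(2d-2)^2$ in the stated theorem is almost certainly a typo for $n(A)$ as in Theorem~\ref{thm:main}, since the appendix presents Theorem~\ref{original theorem} purely as ``Theorem~\ref{thm:pseudoparabolic} with triple edges allowed'' and devotes the entire proof to the single new relative-$(T)$ claim. Your proposed combinatorial refinement (swapping in $2\gamma+3\delta$, etc.) is not needed and would be hard to reconcile with the generation property.

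\textbf{The central-extension step is where your argument actually breaks.} In your ``$\gamma$ long, $\delta$ short'' case you propose to lift relative $(T)$ from $(H/X_{2\gamma+3\delta}, N/X_{2\gamma+3\delta})$ to $(H,N)$ via \cite[Corollary~2]{CT}. But that result (as used in Case~3 of Theorem~\ref{thm:pseudoparabolic}) requires $N$ to be abelian, and here $N$ is a five-dimensional nilpotent group. The paper instead invokes the extension theorem \cite[Theorem~10.5]{EJK}, and to satisfy its hypotheses must establish the sharper fact that $(\langle S\rangle \ltimes \overline N,\overline N)$ has relative $(T)$ for some \emph{finite} $S\subset H$ --- this is exactly why Proposition~\ref{two} carries its ``moreover'' clause. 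Your ``$\gamma$ short, $\delta$ long'' case is also too compressed: the quotient $N$ does not become a rank-$2$ module after killing a single central root subgroup. The paper handles this case by a two-stage extension: the normal subgroup $N_0=\langle X_{\alpha+3\beta},X_{2\alpha+3\beta}\rangle\cong R^2$ carries the $V_2$-action (the $n=2$ case of Theorem~\ref{one}), the quotient $H/N_0$ is identified with the $B_2$ pair already treated in Case~2 of Theorem~\ref{thm:pseudoparabolic}, and the two are combined via the Fern\'os extension lemma \cite[Lemma~5.3]{Fernos}. (Incidentally, your justification $\langle\delta^\vee,2\gamma+3\delta\rangle=0$ for centrality is incorrect; centrality holds, but it follows from the commutator relations, not from vanishing of that pairing.)
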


\begin{proof}
	Borrowing notations from Theorem~\ref{thm:pseudoparabolic}, let $\Psi=G_2$, with a fixed choice of simple roots $\Pi =\{\alpha, \beta \}$, where $\alpha$ is long and  $\beta$ is short. For $\delta, \gamma \in \Pi$, $\delta \neq \gamma $,  consider the subgroup $H = \langle X_\delta,X_{-\delta},X_\gamma \rangle$. It is easy to see that for $N = \langle X_\lambda \mid \lambda \in \Psi ^+ \backslash \{\delta \} \rangle $,  $H =\langle X_\delta, X_{-\delta}\rangle \ltimes  N$. Note that $H \subset P_{A,I}(R)$ and $X_\gamma \subset N$.
	
	The unproven part of the Theorem now hinges on		
		\begin{Claim}\label{final step}
			 $(H,X_\gamma)$ has relative property $(T)$.
		\end{Claim}
	\noindent{\it{Proof.}} Recall that with the set of simple roots $\Pi$ chosen as above,  $\Psi=\{ \pm \alpha, \pm \beta, \pm (\alpha+\beta),\pm (\alpha+2\beta),\pm (\alpha+3\beta), \pm (2\alpha+3\beta) \}$.
	
	\begin{center}
		$\ast$~$\ast$~$\ast$
	\end{center}
	{\it Case 1:} $\delta=\alpha$,  $\gamma=\beta$. 
	
		For $N_0:=\langle X_{\alpha+3\beta},X_{2\alpha+3\beta }\rangle \  \triangleleft \  H$, there is an embedding $\EL _2(R) \ltimes R^2 \cong \langle X_\alpha, X_{-\alpha} \rangle \ltimes N_0 \hookrightarrow H$. By the $n=2$ case of Theorem \ref{one}, $(\langle X_\alpha, X_{-\alpha} \rangle \ltimes N_0, N_0 )$ has relative $(T)$.
		
Let $\overline H=H/N_0$,  and for a subset $S$ of $H$ denote by $\overline S$ its projection in $H/N_0$, so in particular $\overline{N}=N/N_0$.

The first step of our proof would be the statement:
	
\begin{Claim}\label{last min change}
		$(\langle X_\alpha, X_{-\alpha} \rangle \ltimes \overline{N} , \overline{N})$ has relative property $(T)$. 
\end{Claim}

\begin{proof}[Proof of Claim \ref{last min change}] Analyzing the structure of $\langle X_\alpha, X_{-\alpha} \rangle \ltimes \overline{N}$,  one obtains the following: $\overline{N}=\langle \overline{X_\beta}, \overline{X_{\alpha+\beta}}, \overline{X_{\alpha+2\beta}}  \rangle$, $\overline{H}= \langle \overline{X_\beta}, \overline{X_{\alpha+\beta}}, \overline{X_{\alpha+2\beta}}, \overline{X_\alpha}, \overline{X_{-\alpha}}\rangle$, the subgroups $X_\alpha$, ${X}_\beta$, ${X}_{\alpha+\beta}$, ${X}_{\alpha+2\beta}$ and $X_{-\alpha}$ are isomorphic to their images under the quotient map, and $\langle X_\alpha, X_{-\alpha} \rangle \ltimes \overline{N} \cong \overline{H}$.

Using relations (\ref{comm rel g2 one}),(\ref{comm rel g2 two}) from the main text, we see that $\overline{H}$ satisfies  certain commutator relations (which we would like think of as ``$B_2$'' relations):
\begin{eqnarray*}
	[\overline{x_\alpha(r)},\overline{x_\beta(s)}]&=&\overline{x_{\alpha+\beta}(rs)}\ \overline{x_{\alpha+2\beta}(rs^2)},\\
	\mbox{} [\overline{x_{\alpha+\beta}(r)},\overline{x_\beta(s)}]&=&\overline{x_{\alpha+2\beta}(2rs)}.
\end{eqnarray*}

Comparing this with the commutator formulas (\ref{comm rel b2 one}),(\ref{comm rel b2 two}) from the main text, we see that $(\overline{H},\overline{N})$ is isomorphic to the pair $(H,N)$ from Case 2 of Theorem~\ref{thm:pseudoparabolic}. Therefore $(\overline{H},\overline{N})$ indeed has relative $(T)$, and Claim \ref{last min change} is proved.
\end{proof}
	
	We now invoke a lemma about extensions:
	
	\begin{Lemma}\rm (\it\cite[Lemma 5.3]{Fernos}\rm )
		Suppose that $0\rightarrow A_0 \rightarrow A \rightarrow A_1 \rightarrow 0$ is an exact sequence and that $\Gamma$ acts by automorphisms on $A$ and leaves $A_0$ invariant. If $(\Gamma \ltimes A_0, A_0 )$ and $(\Gamma \ltimes A_1, A_1)$ have relative property ($T$) then so does $(\Gamma \ltimes A, A)$.
	\end{Lemma} 
	
	It is now clear that $(H, N )$ has relative $(T)$. Since $X_\beta \subseteq N$, it follows that $(H,X_\beta)$ has relative property $(T)$, and Case 1 is proved.
	\begin{center}
		$\ast$~$\ast$~$\ast$
	\end{center}
\noindent	{\it Case 2:} $\delta$ is the short simple root $\beta$. 
	
	\noindent	We shall again show that $(H,N)$ has relative property ($T$). Then the same would hold for $(H, X_\gamma)$, as was displayed in the last paragraph of Case 1.

	Note that $X_{2\alpha+3\beta}$ is central in $H$, and that $N=\langle X_{\alpha}, X_{\alpha+\beta}, X_{\alpha+2\beta}, X_{\alpha+3\beta}, X_{2\alpha+3\beta} \rangle$. Let us again use the ``bar for quotient'' notation: denote $H/X_{2\alpha+3\beta}$ by $\overline{H}$, and denote the image of any root subgroup $X_\gamma$ under this quotient map as $\overline{X_\gamma}$. Under this notation, $\overline{N}=\langle \overline{X_\alpha}, \overline{X_{\alpha+\beta}}, \overline{X_{\alpha+2\beta}}, \overline{X_{\alpha+3\beta}} \rangle$.

	An inspection of the commutator formulas
	\begin{eqnarray*}
		[x_\alpha(r),x_\beta(s)]&=&x_{\alpha+\beta}(rs)x_{\alpha+2\beta}(rs^2)x_{\alpha+3\beta}(rs^3)x_{2\alpha+3\beta}(r^2s^3),\\
		\mbox{} [x_{\alpha+\beta}(r),x_\beta(s)]&=&x_{\alpha+2\beta}(2rs)x_{\alpha+3\beta}(3rs^2)x_{2\alpha+3\beta}(3r^2s),\\
		\mbox{} [x_{\alpha+2\beta}(r),x_\beta(s)]&=&x_{\alpha+3\beta}(3rs),\\
		\mbox{} 
		[x_{\alpha+3\beta}(r),x_\beta(s)]&=& 1
	\end{eqnarray*} and similar formulas involving $[X_{\alpha},X_{-\beta}],  [X_{\alpha+\beta},X_{-\beta}]$ etc. shows that
	
	\begin{itemize}
		\item $\overline{H} \cong \langle \overline{X_\beta}, \overline{X_{-\beta}} \rangle \ltimes \overline{N}$,
		\item $\overline{N}= \overline{X_{\alpha+3\beta}} \times \overline{X_{\alpha+2\beta}} \times \overline{X_{\alpha+\beta}} \times \overline{X_{\alpha}} \cong R^4$ (as an abelian group),
		\item the action of $\langle \overline{X_\beta}, \overline{X_{-\beta}} \rangle$ on $\overline{N}$ corresponds to the $\EL_2(R)$ action on $V_4(R)$.
	\end{itemize}  Thus Theorem \ref{one} implies that $(\overline{H},\overline{N})$ has relative ($T$). However, we cannot use this fact directly to prove relative $(T)$ for the pair $(H,N)$; we need to establish instead a slightly stronger result: there exists a finite subset $S$ of $H$ such that $( \overline{\langle S,N \rangle},\overline{N})$ has relative $(T)$.

The construction goes as follows: 

By applying Proposition A.5 $n$ times, we see that there exists a finite subset $Q_{[1,n]}$ of $\EL_2(\zz[x_1, \ldots x_n])$ such that the pair $( \la Q_{[1,n]} \ra \ltimes V_n(\zz[x_1, \ldots x_n]), V_n(\zz[x_1, \ldots x_n]))$ has relative property $(T)$ and moreover $Q_{[1,n]}$ is a relative Kazhdan subset for this pair. Assuming $R$ to be $n$-generated, let $proj: \EL_2(\zz[x_1, \ldots x_n]) \ltimes V_n(\zz[x_1, \ldots x_n]) \rightarrow \langle \overline{X_\beta}, \overline{X_{-\beta}}  \rangle \ltimes \overline{N}$ be the standard projection induced by a projection from   $\zz[x_1, \ldots x_n]$ to $R$, and set $\overline{S}=proj(Q_{[1,n]})$. As the image of the pair $( \la Q_{[1,n]} \ra \ltimes V_n(\zz[x_1, \ldots x_n]), V_n(\zz[x_1, \ldots x_n]))$ under $proj$, $(\langle \overline{S} \rangle \ltimes \overline{N}, \overline{N})$ clearly has relative ($T$), and the finite set $\overline{S}$ is its relative Kazhdan subset. Letting $S \subseteq H$ be a finite subset that surjects onto $\overline{S}$, we set $\boldsymbol{H'=\langle S,N \rangle}$.

It is now imperative to quote a theorem regarding extensions:  
	
	\begin{Theorem}[Theorem 10.5, EJK]\label{extensionlemma}
		Let $H'$ be a group, $N$ a normal subgroup of $H'$ and $Z \subseteq  Z(H') \cap  N$, where $Z(H')$ is the center of $H'$. Put
		$I = Z \cap [N, H']$. Let $S$ be a finite subset of $H'$.
		
		If the conditions
		\begin{enumerate}
			\item $S$ and $N$ generate $H'$, 
			\item $(H'/Z,N/Z)$ and $(H'/I,Z/I)$ both have relative ($T$)
		\end{enumerate}
		are both satisfied, then $(H',N)$ has relative property ($T$).
	\end{Theorem}

Once notations are set up properly according to \ref{extensionlemma} above, this theorem implies straightforwardly that $(H',N)$ has relative $(T)$: recall how $\overline{S}$, $S$, $\overline{H}$, $H$, and $H'$ are defined before Theorem \ref{extensionlemma}. Additionally, let $Z=X_{2\alpha+3\beta}$ -- this is a valid choice as $X_{2\alpha+3\beta}$ is central in $H$. It follows that  $I=X_{2\alpha+3\beta} \cap [N, \langle S, N \rangle ]=X_{2\alpha+3\beta}$. Noting that the pairs $(\langle \overline{S} \rangle \ltimes \overline{N}, \overline{N})= (H'/Z,N/Z)$ and $(\overline{H},\overline{N})$ both have $\overline{S}$ as a (finite) relative Kazhdan subset, and that $(H'/I,Z/I)=(H'/Z, Z/Z)$ trivially have relative  ($T$), Theorem \ref{extensionlemma} gives us the desired result.

 As $H'\subseteq H$, $(H,N)$ has relative property ($T$) as well. This finishes Case 2 and proves Claim \ref{final step}.
\end{proof}

\begin{remarkstar}
	The fact that $(\EL_2(R) \ltimes V_3(R), V_3(R))$ has relative ($T$) is used when $\Psi=B_2$, which is Case $3$ in the proof of Theorem~\ref{thm:pseudoparabolic}.
\end{remarkstar}

\section*{Acknowledgement}
The author would like to thank Marcus Neuhauser for his clarification about the computations in \cite{Ne}, and Mikhail Ershov for his careful checking of the results in this appendix.

\end{document}